\newtheorem{theorem}{Theorem}[section]
\newtheorem{lemma}[theorem]{Lemma}
\newtheorem{prop}[theorem]{Proposition}
\newtheorem{corollary}[theorem]{Corollary}
\newtheorem{claim}[theorem]{Claim}
\theoremstyle{definition}
\newtheorem{definition}[theorem]{Definition}
\newtheorem{example}[theorem]{Example}
\theoremstyle{remark}
\newtheorem{remark}[theorem]{Remark}
\numberwithin{equation}{section}
\DeclareMathOperator*{\Res}{Res}
\DeclareMathOperator{\Aut}{Aut}
\def\id{\mathrm{id}}
\def\abs#1{\lvert#1\rvert}
\long\def\comment#1{}
\def\Lh#1{[#1]}
\def\VN#1{\overset{\rlap{$#1$}}{\V}}
\def\EB#1#2{\rlap{\raisebox{-0.3ex}{#2}}\raisebox{0.3ex}{#1}}
\def\V{{\setlength{\unitlength}{1pt}\begin{picture}(4,4)\put(2,2){\circle{4}}\end{picture}}}
\def\E{{\setlength{\unitlength}{1pt}\begin{picture}(38,2)\put(0,2){\line(1,0){38}}\end{picture}}}
\def\EO{{\setlength{\unitlength}{1pt}\begin{picture}(38,6)\put(0,2){\line(1,0){16}}\put(22,2){\line(1,0){16}}\put(16,4){\oval(4,4)[l]}\put(16,0){\oval(4,4)[r]}\put(22,4){\oval(4,4)[l]}\put(22,0){\oval(4,4)[r]}\end{picture}}}
\markboth{\today}{\today}
\begin{document}

\title[Zeta-functions of root systems and Poincar{\'e} polynomials]{Zeta-functions of root systems and Poincar{\'e} polynomials of Weyl groups}

\author{Yasushi Komori}
\address{Y. Komori: Department of Mathematics, Rikkyo University, Nishi-Ikebukuro, Toshima-ku, Tokyo 171-8501, Japan}
\email{komori@rikkyo.ac.jp}

\author{Kohji Matsumoto}
\address{K. Matsumoto: Graduate School of Mathematics, Nagoya University, Chikusa-\
ku, Nagoya 464-8602, Japan}
\email{kohjimat@math.nagoya-u.ac.jp}

\author{Hirofumi Tsumura}
\address{{H.\,Tsumura:} Department of Mathematics and Information Sciences, Tokyo \
Metropolitan University, 1-1, Minami-Ohsawa, Hachioji, Tokyo 192-0397, Japan}
\email{tsumura@tmu.ac.jp}

\keywords{Zeta-functions of root systems, Poincar{\'e} polynomials}
\subjclass[2010]{Primary 11M41, Secondary 11B68, 11F27, 11M32, 11M99}

\begin{abstract}
We consider a certain linear combination $S(\mathbf{s},\mathbf{y};I;\Delta)$
of zeta-functions of root systems, where $\Delta$ is a root system of rank $r$ and
$I\subset\{1,2,\ldots,r\}$.    Showing two different expressions of
$S(\mathbf{s},\mathbf{y};I;\Delta)$, we find 
that a certain signed sum of zeta-functions of root systems is
equal to a sum involving Bernoulli functions of root systems.    
This identity gives a non-trivial functional relation among zeta-functions of root
systems, if the signed sum does not identically vanish.     
This is a genralization of the authors' previous result proved in \cite{KMTLondon},
in the case when $I=\emptyset$.    We present several
explicit examples of such functional relations.     A criterion of the non-vanishing
of the signed sum, in terms of Poincar{\'e} polynomials of associated Weyl groups,
is given.     Moreover we prove a certain converse theorem, which implies that the
generating function for the case $I=\emptyset$ essentially knows all information on
generating functions for general $I$.
\end{abstract}

\maketitle

\baselineskip 16pt

\section{Introduction}

Let $\mathbb{N}$ be the set of positive integers, $\mathbb{N}_0$ the set of
non-negative integers, $\mathbb{Z}$ the set of rational integers, $\mathbb{R}$ the set of real numbers, and $\mathbb{C}$ the set of complex numbers. 
For any set $S$, denote by $|S|$ the cardinality of $S$.

Let $V$ be an $r$-dimensional real vector space equipped with an inner product
$\langle\cdot,\cdot\rangle$.   The dual space $V^*$ is identified with $V$
via this inner product.    Let $\Delta$ be a finite reduced root system in $V$
and $\Psi=\{\alpha_1,\ldots,\alpha_r\}$ its fundamental system.   Let 
$\Delta_+$ and $\Delta_-$ be the sets of all positive roots and negative roots,
respectively: $\Delta=\Delta_+\coprod\Delta_-$.
We denote by $\alpha^{\vee}$ the coroot associated with a root $\alpha$.
Let $\Lambda=\{\lambda_1,\ldots,\lambda_r\}$ be the set of fundamental weights
defined by $\langle\alpha_i^{\vee},\lambda_j\rangle=\delta_{ij}$ (Kronecker's delta).
Let $Q^{\vee}$ be the coroot lattice,
$P$ the weight lattice, $P_+$ the set of integral dominant weights, and
$P_{++}$ the set of integral strongly dominant weights, respectively, defined 
by
$$
Q^{\vee}=\bigoplus_{i=1}^r \mathbb{Z}\alpha_i^{\vee},\quad
P=\bigoplus_{i=1}^r\mathbb{Z}\lambda_i,\quad
P_+=\bigoplus_{i=1}^r\mathbb{N}_0\lambda_i,\quad
P_{++}=\bigoplus_{i=1}^r\mathbb{N}\lambda_i.
$$

Let $\mathbf{y}\in V$, and $\mathbf{s}=(s_{\alpha})_{\alpha\in\Delta_+}
\in\mathbb{C}^{|\Delta_+|}$.
The zeta-function of the root system $\Delta$ is defined by
\begin{equation}
  \label{1-1}
  \zeta_r(\mathbf{s},\mathbf{y};\Delta)=\sum_{\lambda\in P_{++}}
  e^{2\pi\sqrt{-1}\langle \mathbf{y},\lambda\rangle}
  \prod_{\alpha\in\Delta_+}
  \frac{1}{\langle\alpha^\vee,\lambda\rangle^{s_\alpha}}.
\end{equation}

This function was introduced and has been studied by the authors in
\cite{KMTKyushu} \cite{KMTWitten2} \cite{KMTLondon} \cite{KMTNicchuu}
\cite{KMTWitten4} \cite{KMTWitten3} \cite{KMTWitten5} and \cite{MTFourier}.

Let $\mathfrak{g}$ be a complex semisimple Lie algebra.   If
$\Delta=\Delta(\mathfrak{g})$ is the root system associated with
$\mathfrak{g}$, and $s_{\alpha}=s$ for all $\alpha\in\Delta_+$, then
$\zeta_r((s,s,\ldots,s),\mathbf{0};\Delta)$ is essentially equal to the Witten
zeta-function of $\mathfrak{g}$ studied by Witten \cite{Witten} and Zagier
\cite{Zagier}, up to a certain simple factor (see \cite[(1.7)]{KMTWitten2}).
It is well known that simple Lie algebras are classified into seven types; we denote 
them by $X_r$, where $X=A, B, C, D, E, F, G$ and $r$ denotes its rank.    
When $\mathfrak{g}$ is of
type $X_r$, we frequently write its root system as $\Delta(X_r)$, and
its zeta-function as 
$\zeta_r(\mathbf{s}, \mathbf{y};X_r)$. 
In particular, $\zeta_1(s,0;A_1)=\zeta(s)$, the classical Riemann zeta-function,
and
$$
\zeta_2((s_1,s_2,s_3),\mathbf{0};A_2)=\sum_{m_1=1}^{\infty}\sum_{m_2=1}^{\infty}m_1^{-s_1}
m_2^{-s_2}(m_1+m_2)^{-s_3},
$$
which is sometimes called the (Mordell-)Tornheim double sum \cite{Tornheim}.

On the other hand, we can see that the Euler-Zagier $r$-ple zeta-function
$$
\zeta_{EZ,r}(s_1,\ldots,s_r)=\sum_{m_1=1}^{\infty}\cdot\sum_{m_r=1}^{\infty}m_1^{-s_1}
(m_1+m_2)^{-s_2}\cdots (m_1+\cdots+m_r)^{-s_r}
$$
may be regarded as a special case of zeta-functions of root systems of type $A_r$
(see \cite{KMTMathZ}), or of type $C_r$ (see \cite{KMTFACM}).

Therefore we can say that the notion of zeta-functions of root systems gives a 
unification of two important class of multiple zeta-functions, of Witten and of
Euler and Zagier.

A lot of relations among special values (at integer points) of Euler-Zagier multiple 
zeta-functions are known.
Are there any functional relations which interpolate those relations?    
This question was raised, around 2000, by the second-named author (cf. \cite{Mat06}).
Needless to say,
harmonic product formulas such as
$$
\zeta(s_1)\zeta(s_2)=\zeta_{EZ,2}(s_1,s_2)+\zeta_{EZ,2}(s_2,s_1)+\zeta(s_1+s_2)
$$
are valid not only at integer points, but also at any other complex values of $s_1$ and
$s_2$, so these give an answer.    But what else?
So far, no other functional relations has been discovered among Euler-Zagier multiple
zeta-functions (except for a kind of functional equation for the case $r=2$ discovered by the
second-named author \cite{Mat04}), and in fact, a kind of negative answer was obtained
recently by Ikeda and Matsuoka \cite{IMprep}.

However, if we extend the range of the search, we may find such functional relations.
The first example is a functional relation between $\zeta(s)$ and 
$\zeta_2((s_1,s_2,s_3),\mathbf{0};A_2)$ discovered by the third-named author \cite{Tsu07},
which interpolates certain value relations among $\zeta(k)$ and $\zeta_{EZ,2}(k_1,k_2)$
($k,k_1,k_2\in\mathbb{N}$).    After this discovery, various other functional relations
among zeta-functions of root systems have been reported (the aforementioned papers of
the authors, Nakamura \cite{Nak06} \cite{Nak08}, Zhou et al. \cite{ZBC}, 
Onodera \cite{Onod14}, and 
Ikeda and Matsuoka \cite{IM}).

On the other hand, the structural background of the existence of those functional
relations has been studied in \cite{KMTWitten3}, \cite{KMTLondon}.
The present paper is a continuation of these two papers.
The main actor of the present paper is the ``Weyl-group-symmetric'' linear combination
$S(\mathbf{s},\mathbf{y};I;\Delta)$ of zeta-functions of root systems defined 
by \eqref{2-1} below.
This $S(\mathbf{s},\mathbf{y};I;\Delta)$ has two different expressions:
(i) It is a signed sum of zeta-functions of root systems, 
and on the other hand, (ii) it can be expressed in terms of certain
generalization of Bernoulli functions $P(\mathbf{k},\mathbf{y},\lambda;I;\Delta)$;
the exact form of these facts will be stated in Section \ref{sec2}
(\eqref{sahen} and Theorem \ref{thm:main1}). 

Combining these two expressions, (if they do not identically vanish) we can obtain
certain functional relations among zeta-functions of root systems.    
We will state several
explicit forms of such functional relations.
However, since the expression (i) is a signed sum, there is the possibility that it
vanishes identically.    Whether it vanishes or not can be seen by observing the
associated Poincar{\'e} polynomials.    This is another main theme of the present paper.   

\section{Fundamental formulas}\label{sec2}

Let $I\subset\{1,2,\ldots,r\}$, and $\Psi_I=\{\alpha_i \;|\; i\in I\}\subset \Psi$.
Let $V_I$ be the subspace of $V$ spanned by $\Psi_I$.    Then $\Delta_I=\Delta\cap V_I$
is the root system in $V_I$ whose fundamental system is $\Psi_I$.
For $\Delta_I$, we denote the corresponding coroot lattice, weight lattice etc.\ by 
$Q_I^{\vee}=\bigoplus_{i\in I}\mathbb{Z}\alpha_i^{\vee}$,
$P_I=\bigoplus_{i\in I}\mathbb{Z}\lambda_i$ etc. 
Let $\iota:Q_I^{\vee}\to Q^{\vee}$ be the natural embedding, 
and $\iota^*:P\to P_I$ the projection induced from $\iota$; that is, for 
$\lambda\in P$, $\iota^*(\lambda)$ is defined as a unique element of $P_I$ satisfying
$\langle\iota(q),\lambda\rangle=\langle q,\iota^*(\lambda)\rangle$ for all
$q\in Q_I^{\vee}$.

Let ${\rm Aut}(\Delta)$ be the subgroup of ${\rm GL}(V)$, consisting of all automorphisms
which stabilizes $\Delta$.
Let $\sigma_{\alpha}\in{\rm Aut}(\Delta)$ be the reflection with respect to $\alpha$, and 
denote by $W=W(\Delta)$ the Weyl
group of $\Delta$, namely the group generated by $\{\sigma_{i}\;|\;1\leq i\leq r\}$,
where $\sigma_i=\sigma_{\alpha_i}$.
This is a normal subgroup of ${\rm Aut}(\Delta)$.
For $w\in W$, we put $\Delta_w=\Delta_+\cap w^{-1}\Delta_-$.
Let $W_I$ be the subgroup of $W$ generated by all the reflections associated with the elements
in $\Psi_I$, and
$W^I=\{w\in W\;|\; \Delta_{I+}^{\vee}\subset w\Delta_+^{\vee}\}$.

The fundamental Weyl chamber is defined by
$$
C=\{v\in V\;|\; \langle\alpha_i^{\vee},v\rangle\geq 0\;{\rm for}\;1\leq i\leq r\}.
$$
Then $W$ acts on the set of Weyl chambers $\{wC\;|\;w\in W\}$ simply transitively.
For any subset $A\subset\Delta$, 
let $H_{A^{\vee}}$ be the set of all
$v\in V$ which satisfies $\langle\alpha^{\vee},v\rangle= 0$ for some $\alpha\in A$.  
In particular, $H_{\Delta^{\vee}}$ is the set of all walls of Weyl chambers.

Now define
\begin{align}\label{2-1}
S(\mathbf{s},\mathbf{y};I;\Delta)=\sum_{\lambda\in \iota^{*-1}(P_{I+})\setminus 
  H_{\Delta^{\vee}}}
  e^{2\pi\sqrt{-1}\langle \mathbf{y},\lambda\rangle}
  \prod_{\alpha\in\Delta_+}
  \frac{1}{\langle\alpha^\vee,\lambda\rangle^{s_\alpha}}.
  \end{align}
This sum was first introduced in \cite[(110)]{KMTWitten3}.
(Note that \cite{KMTWitten3} was published later than \cite{KMTLondon}, but was
written earlier, already in 2007).
In \cite[Theorems 5 and 6]{KMTWitten3}, we showed
\begin{align}\label{sahen}
S(\mathbf{s},\mathbf{y};I;\Delta)
    =
    \sum_{w\in W^I}
    \Bigl(\prod_{\alpha\in\Delta_{w^{-1}}}(-1)^{-s_{\alpha}}\Bigr)
    \zeta_r(w^{-1}\mathbf{s},w^{-1}\mathbf{y};\Delta).
\end{align}
In the above statement, the action of $W$ to $\mathbf{s}$ is
defined by $(w\mathbf{s})_{\alpha}=s_{w^{-1}\alpha}$ for $w\in W$ with the convention 
that, if
$\alpha\in\Delta_-$, then we understand that $s_{\alpha}=s_{-\alpha}$.

We also proved in \cite[Theorems 5 and 6]{KMTWitten3} a certain multiple integral 
expression of $S(\mathbf{s},\mathbf{y};I;\Delta)$.
When $I=\emptyset$,
we further observed that, in some special cases, those integrals may be regarded as
generalizations of classical (Seki-)Bernoulli functions, which we denoted by
$P(\mathbf{k},\mathbf{y};\Delta)$ (which is actually the special case $\lambda=\mathbf{0}$,
$I=\emptyset$ of
$P(\mathbf{k},\mathbf{y},\lambda;I;\Delta)$ defined later in \eqref{eq:def_F}).
We gave multiple integral expressions of generating functions of $P(\mathbf{k},\mathbf{y};\Delta)$
(see \cite[Theorem 7]{KMTWitten3}), but it requires extremely huge task if we want to
calculate that expression more explicitly.
This situation was improved in \cite{KMTLondon}, in which more accessible expressions were given 
(see \cite[Theorem 4.1]{KMTLondon}).

However, these are the results in the case $I=\emptyset$.    In order to develop the full
theory of functional relations, it is necessary to obtain the analogous results for general
$I$.    This is the first aim of the present paper.    To state the results, we need some
more notations.

Let 
$\Delta^*=\Delta_+\setminus\Delta_{I+}$ and
$d=\abs{I^c}$.
We may find $\mathbf{V}_I=\{\gamma_1,\ldots,\gamma_d\}\subset \Delta^*$
such that $\mathbf{V}=\mathbf{V}_I\cup\Psi_I$ becomes a basis of $V$.
Let $\mathscr{V}_I=\mathscr{V}(\Delta^*)$ be the set of all such bases.
In particular, $\mathscr{V}=\mathscr{V}_{\emptyset}$ be the set of all
linearly independent subsets
$\mathbf{V}=\{\beta_1,\ldots,\beta_r\}\subset\Delta_+$.

For $\mathbf{V}\in\mathscr{V}_I$,
the lattice $L(\mathbf{V}^{\vee})=\bigoplus_{\beta\in\mathbf{V}}\mathbb{Z}\beta^{\vee}$
is a sublattice of $Q^{\vee}$.
Let $\{\mu^{\mathbf{V}}_\gamma\}_{\gamma\in\mathbf{V}}$ 
be the dual basis of $\mathbf{V}^\vee=\mathbf{V}_I^\vee\cup\Psi_I^\vee$,
namely
$\langle \gamma_k^{\vee},\mu^{\mathbf{V}}_{\gamma_l}\rangle=\delta_{kl}$,
$\langle \alpha_i^{\vee},\mu^{\mathbf{V}}_{\alpha_j}\rangle=\delta_{ij}$, and
$\langle\gamma_k^{\vee}, \mu^{\mathbf{V}}_{\alpha_i}\rangle
=\langle\alpha_i^{\vee}, \mu^{\mathbf{V}}_{\gamma_k}\rangle=0$.
Let $p_{\mathbf{V}_I^\perp}$ 
be the projection defined by
\begin{equation}
  \label{eq:proj}
  p_{\mathbf{V}_I^\perp}(v)=
  v-\sum_{\gamma\in\mathbf{V}_I}\mu^{\mathbf{V}}_\gamma\langle\gamma^\vee,v\rangle
  =
  \sum_{\alpha\in\Psi_I}\mu^{\mathbf{V}}_{\alpha}\langle\alpha^\vee,v\rangle,
\end{equation}
for $v\in V$.  
(The second equality can be easily seen by expressing 
$v=\sum_k a_k\mu^{\mathbf{V}}_{\gamma_k}+\sum_i b_i\mu^{\mathbf{V}}_{\alpha_i}$.)
It is to be noted that the projection
  $p_{\mathbf{V}_I^\perp}$ depends only on $\mathbf{V}_I$ in the following sense.
\begin{lemma}
\label{lm:indept_bases}
  For any linearly independent subset $\Phi_I=\{\beta_1,\ldots,\beta_{|I|}\}\subset\Delta_{I+}$ and $\mathbf{U}=\mathbf{V}_I\cup\Phi_I$, we have
  \begin{equation}
    p_{\mathbf{V}_I^\perp}(v)=
    v-\sum_{\gamma\in\mathbf{V}_I}\mu^{\mathbf{U}}_\gamma\langle\gamma^\vee,v\rangle
    =
    \sum_{\beta\in\Phi_I}\mu^{\mathbf{U}}_{\beta}\langle\beta^\vee,v\rangle.
  \end{equation}
\end{lemma}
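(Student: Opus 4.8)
The plan is to recognize that both expressions appearing in the statement define one and the same linear map, namely a projection of $V$, and then to pin this projection down by its image and kernel, identified in a way that is manifestly independent of the chosen completion. First I would observe that the \emph{second} equality is the exact analogue, for $\mathbf{U}=\mathbf{V}_I\cup\Phi_I$, of the parenthetical remark following \eqref{eq:proj}: since $\Phi_I$ is a set of $|I|$ linearly independent vectors inside the $|I|$-dimensional space $V_I$, it is a basis of $V_I$, so $\mathbf{U}$ is a basis of $V$ and the dual basis $\{\mu^{\mathbf{U}}_\gamma\}_{\gamma\in\mathbf{V}_I}\cup\{\mu^{\mathbf{U}}_\beta\}_{\beta\in\Phi_I}$ of $\mathbf{U}^\vee$ exists; expanding $v$ in this dual basis and reading off the coefficients as $\langle\gamma^\vee,v\rangle$ and $\langle\beta^\vee,v\rangle$ yields $v-\sum_{\gamma}\mu^{\mathbf{U}}_\gamma\langle\gamma^\vee,v\rangle=\sum_{\beta}\mu^{\mathbf{U}}_\beta\langle\beta^\vee,v\rangle$. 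It therefore remains to prove the first equality, i.e.\ that the map $p'(v)=\sum_{\beta\in\Phi_I}\mu^{\mathbf{U}}_\beta\langle\beta^\vee,v\rangle$ agrees with $p_{\mathbf{V}_I^\perp}$.

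Both maps are idempotent: $p_{\mathbf{V}_I^\perp}^2=p_{\mathbf{V}_I^\perp}$ follows from $\langle\alpha^\vee,\mu^{\mathbf{V}}_{\alpha'}\rangle=\delta_{\alpha\alpha'}$ for $\alpha,\alpha'\in\Psi_I$, and the same computation with $\langle\beta^\vee,\mu^{\mathbf{U}}_{\beta'}\rangle=\delta_{\beta\beta'}$ gives ${p'}^2=p'$. I would then compute their images. By the second equality of \eqref{eq:proj}, $\operatorname{Im}p_{\mathbf{V}_I^\perp}$ is spanned by $\{\mu^{\mathbf{V}}_\alpha\}_{\alpha\in\Psi_I}$, and each $\mu^{\mathbf{V}}_\alpha$ satisfies $\langle\gamma^\vee,\mu^{\mathbf{V}}_\alpha\rangle=0$ for every $\gamma\in\mathbf{V}_I$; hence $\operatorname{Im}p_{\mathbf{V}_I^\perp}\subseteq A$, where $A=\{v\in V:\langle\gamma^\vee,v\rangle=0\ \text{for all}\ \gamma\in\mathbf{V}_I\}$. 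Because $\mathbf{V}_I$ is linearly independent, so are the coroots $\{\gamma^\vee\}_{\gamma\in\mathbf{V}_I}$, whence $\dim A=r-d=|I|=\dim\operatorname{Im}p_{\mathbf{V}_I^\perp}$ and thus $\operatorname{Im}p_{\mathbf{V}_I^\perp}=A$. The identical argument applied to $p'$ (the vectors $\mu^{\mathbf{U}}_\beta$ also annihilate every $\gamma^\vee$, $\gamma\in\mathbf{V}_I$) gives $\operatorname{Im}p'=A$; note that $A$ depends only on $\mathbf{V}_I$.

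For the kernels, $v\in\ker p_{\mathbf{V}_I^\perp}$ if and only if $\langle\alpha^\vee,v\rangle=0$ for all $\alpha\in\Psi_I$, which amounts to $v\in V_I^\perp$ because $\{\alpha^\vee:\alpha\in\Psi_I\}$ spans $V_I$; likewise $\ker p'=V_I^\perp$ since $\{\beta^\vee:\beta\in\Phi_I\}$ spans $V_I$ as well. So $p_{\mathbf{V}_I^\perp}$ and $p'$ are idempotents of $V$ with the same image $A$ and the same kernel $V_I^\perp$, and any idempotent is determined by its image and kernel through the induced decomposition $V=A\oplus V_I^\perp$; hence $p_{\mathbf{V}_I^\perp}=p'$, which is the asserted identity. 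I expect the only genuinely delicate point to be the intrinsic identification of the kernel: one must pass from ``$\Psi_I$ (resp.\ $\Phi_I$) spans $V_I$'' to ``the corresponding coroots span $V_I$'', and it is precisely this coroot bookkeeping that makes the kernel---and hence the whole projection---independent of whether the completion is taken to be $\Psi_I$ or $\Phi_I$.
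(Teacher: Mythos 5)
Your proof is correct, and it takes a genuinely different route from the paper's. The paper argues pointwise: for a fixed $v$ it sets $u=\sum_{\beta\in\Phi_I}\mu^{\mathbf{U}}_{\beta}\langle\beta^\vee,v\rangle$ and checks that $\langle\gamma^\vee,p_{\mathbf{V}_I^\perp}(v)\rangle=\langle\gamma^\vee,u\rangle$ for every $\gamma\in\mathbf{U}$; since $\mathbf{U}^\vee$ is a basis, this forces $p_{\mathbf{V}_I^\perp}(v)=u$. The case $\gamma\in\mathbf{V}_I$ gives $0=0$ on both sides, and the case $\beta\in\Phi_I$ uses exactly the fact you isolate as the delicate point, namely that $\beta^\vee$ is a linear combination of $\{\alpha^\vee\}_{\alpha\in\Psi_I}$. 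You instead work at the level of operators: both maps are idempotent, both have image equal to the common annihilator $A=\{v\in V:\langle\gamma^\vee,v\rangle=0\ \text{for all}\ \gamma\in\mathbf{V}_I\}$ by a dimension count, both have kernel $V_I^\perp$ because the coroots of $\Psi_I$ and of $\Phi_I$ each span $V_I$, and an idempotent is determined by its image and kernel. What your approach buys is an intrinsic characterization of $p_{\mathbf{V}_I^\perp}$ as the projection onto $A$ along $V_I^\perp$, two subspaces visibly depending only on $\mathbf{V}_I$ and $I$, so the independence of the completion ($\Psi_I$ versus $\Phi_I$) becomes structurally transparent rather than emerging from a computation; what the paper's verification buys is brevity, since it needs no discussion of idempotency, images, or dimensions. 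Note also that your handling of the second equality (the dual-basis expansion of $v$ with respect to $\mathbf{U}$) is the same step the paper uses implicitly when it rewrites $u$ as $v-\sum_{\gamma_1\in\mathbf{V}_I}\mu^{\mathbf{U}}_{\gamma_1}\langle\gamma_1^\vee,v\rangle$ in its computation for $\gamma\in\mathbf{V}_I$, so the two proofs share their elementary inputs and differ only in how those inputs are packaged.
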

\begin{proof}
  Put $u=\sum_{\beta\in\Phi_I}\mu^{\mathbf{U}}_{\beta}\langle\beta^\vee,v\rangle$
 and
we show
  $p_{\mathbf{V}_I^\perp}(v)=u$.    It is enough to check that 
$\langle\gamma^\vee,p_{\mathbf{V}_I^\perp}(v)\rangle=\langle\gamma^{\vee},u\rangle$
for all $\gamma\in\mathbf{U}$.
  For $\gamma\in\mathbf{V}_I$, we have
  \begin{align*}
    \langle\gamma^\vee,p_{\mathbf{V}_I^\perp}(v)\rangle
    &=                                                     
      \Bigl\langle\gamma^\vee,v-\sum_{\gamma_1\in\mathbf{V}_I}\mu^{\mathbf{V}}_{\gamma_1}
      \langle\gamma_1^{\vee},v\rangle\Bigr\rangle
      =\langle\gamma^\vee,v\rangle-\langle\gamma^\vee,v\rangle=0,
    \\
    \langle\gamma^\vee,u\rangle
    &=
\Bigl\langle\gamma^\vee,v-\sum_{\gamma_1\in\mathbf{V}_I}\mu^{\mathbf{U}}_{\gamma_1}\langle
      \gamma_1^\vee,v\rangle\Bigr\rangle
      =\langle\gamma^\vee,v\rangle-\langle\gamma^\vee,v\rangle=0,
  \end{align*}
  because $\mathbf{V}_I\subset\mathbf{V},\mathbf{U}$.   For $\beta\in\Phi_I$,
  we have
  \begin{align*} 
\langle\beta^\vee,u\rangle
=\sum_{\beta_1\in\Phi_I}\Bigl\langle\beta^{\vee},\mu_{\beta_1}^{\mathbf{U}}
\langle\beta_1^{\vee},v\rangle\Bigr\rangle
=\sum_{\beta_1\in\Phi_I}\langle\beta_1^{\vee},v\rangle \langle\beta^{\vee},\mu_{\beta_1}^{\mathbf{U}}\rangle
=\langle\beta^{\vee},v\rangle,
\end{align*}
while
  by writing $\beta^\vee=\sum_{\alpha\in\Psi_I}a_\alpha\alpha^\vee$, we have
  \begin{align*}
    \langle\beta^\vee,p_{\mathbf{V}_I^\perp}(v)\rangle &=
    \sum_{\alpha_1\in\Psi_I}\langle\beta^\vee,\mu^{\mathbf{V}}_{\alpha_1}\rangle\langle\alpha_1^\vee,v\rangle
    = \sum_{\alpha\in\Psi_I}a_\alpha\langle\alpha^\vee,v\rangle\\
    &=\Bigl\langle\sum_{\alpha\in\Psi_I}a_\alpha \alpha^\vee,v\Bigr\rangle
    =\langle\beta^\vee,v\rangle.
  \end{align*}
\end{proof}

Next we introduce a generalization of the notion of ``fractional part'' of real numbers.
Let $\mathscr{V}$ be the set of linearly independent subsets
$\mathbf{V}=\{\beta_1,\ldots,\beta_r\}\subset\Delta_+$.
Also, let $\mathscr{R}$ be the set of all linearly independent subsets
$\mathbf{R}=\{\beta_1,\ldots,\beta_{r-1}\}\subset\Delta$, and let
$\mathfrak{H}_{\mathbf{R}^{\vee}}=\bigoplus_{i=1}^{r-1}\mathbb{R}\beta_i^{\vee}$
be the hyperplane passing through $\mathbf{R}^{\vee}\cup\{0\}$.    We fix a non-zero vector
$$
\phi\in V\setminus\bigcup_{\mathbf{R}\in\mathscr{R}}\mathfrak{H}_{\mathbf{R}^{\vee}}.
$$
Then $\langle\phi,\mu_{\beta}^{\mathbf{V}}\rangle\neq 0$ for all 
$\mathbf{V}\in\mathscr{V}$ and $\beta\in\mathbf{V}$.
For $\mathbf{y}\in V$, $\mathbf{V}\in\mathscr{V}$ and $\beta\in\mathbf{V}$, we define
\begin{align}
\{\mathbf{y}\}_{\mathbf{V},\beta}=\left\{
   \begin{array}{ll}
   \{\langle\mathbf{y},\mu_{\beta}^{\mathbf{V}}\rangle\},& 
        (\langle\phi,\mu_{\beta}^{\mathbf{V}}\rangle>0),\\
   1-\{-\langle\mathbf{y},\mu_{\beta}^{\mathbf{V}}\rangle\},
        & (\langle\phi,\mu_{\beta}^{\mathbf{V}}\rangle<0),
   \end{array}
   \right.
\end{align}
where $\{\cdot\}$ on the right-hand sides denotes the usual fractional part of real
numbers.

Using these notions, we now define Bernoulli functions of the root system $\Delta$
associated with $I$ and their generating functions.

\begin{definition}
For $\mathbf{t}_I=(t_{\alpha})_{\alpha\in\Delta^*}\in\mathbb{C}^{|\Delta^*|}$ and
$\lambda\in P_I$, let
\begin{align}
  \label{eq:exp_F}
 & F(\mathbf{t}_I,\mathbf{y},\lambda;I;\Delta)\\
  &=
  \sum_{\mathbf{V}\in\mathscr{V}_I}
  \left(\prod_{\gamma\in \Delta^*\setminus\mathbf{V}_I}
  \frac{t_\gamma}
  {t_\gamma-\sum_{\beta\in\mathbf{V}_I}
    t_\beta\langle\gamma^\vee,\mu^{\mathbf{V}}_\beta\rangle
    -2\pi\sqrt{-1}
    \langle \gamma^\vee,p_{\mathbf{V}_I^\perp}(\lambda)\rangle}\right)\notag
  \\
  &\qquad\times
  \frac{1}{\abs{Q^\vee/L(\mathbf{V}^\vee)}}
  \sum_{q\in Q^\vee/L(\mathbf{V}^\vee)}
  \exp(2\pi\sqrt{-1}\langle \mathbf{y}+q,p_{\mathbf{V}_I^\perp}(\lambda)\rangle)\notag\\
  &\qquad\times
  \prod_{\beta\in\mathbf{V}_I}
  \frac{t_\beta\exp
    (t_\beta
    \{\mathbf{y}+q\}_{\mathbf{V},\beta})}{e^{t_\beta}-1},\notag
\end{align}
and define {\it Bernoulli functions} $P(\mathbf{k},\mathbf{y},\lambda;I;\Delta)$ 
{\it of the root system $\Delta$
associated with $I$} by the expansion
\begin{equation}
\label{eq:def_F}
  F(\mathbf{t}_I,\mathbf{y},\lambda;I;\Delta)=
  \sum_{\mathbf{k}\in \mathbb{N}_0^{\abs{\Delta^*}}}P(\mathbf{k},\mathbf{y},\lambda;I;\Delta)
  \prod_{\alpha\in \Delta^*}
  \frac{t_\alpha^{k_\alpha}}{k_\alpha!}.
\end{equation}
\end{definition}

The fundamental formula in our theory is the following theorem.

\begin{theorem}
\label{thm:main1}
Let $s_\alpha=k_\alpha\in\mathbb{N}$ for $\alpha\in \Delta^*$ and
$s_\alpha\in\mathbb{C}$ for $\alpha\in\Delta_{I+}$. 
We assume


{\rm (\#)} If $\alpha$ belongs to an irreducible component of type $A_1$,
 then the corresponding $k_{\alpha}\geq 2$.  

Then we have
\begin{equation}
  \label{eq:func_eq}
  \begin{split}
    &S(\mathbf{s},\mathbf{y};I;\Delta)
    \\
    &\qquad=(-1)^{\abs{\Delta^*}}
    \biggl(\prod_{\alpha\in \Delta^*}
    \frac{(2\pi\sqrt{-1})^{k_\alpha}}{k_\alpha!}\biggr)
    \sum_{\lambda\in P_{I++}}
    \biggr(
    \prod_{\alpha\in\Delta_{I+}}
    \frac{1}{\langle\alpha^\vee,\lambda\rangle^{s_\alpha}}
    \biggl)
    P(\mathbf{k},\mathbf{y},\lambda;I;\Delta).
  \end{split}
\end{equation}
\end{theorem}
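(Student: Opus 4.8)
The plan is to start from the lattice-sum definition \eqref{2-1} and to convert it, direction by direction, into the generating function \eqref{eq:exp_F}. Writing $n_\alpha=\langle\alpha^\vee,\lambda\rangle$, the summation set $\iota^{*-1}(P_{I+})\setminus H_{\Delta^\vee}$ is exactly $\{\lambda\in P : n_{\alpha_i}\geq 1\ (i\in I)\text{ and }n_\alpha\neq 0\ (\alpha\in\Delta)\}$. My first step is to group the summand of \eqref{2-1} according to the $I$-part $\mu=\iota^*(\lambda)\in P_{I++}$ of $\lambda$ (this $\mu$ is the variable written $\lambda$ on the right of \eqref{eq:func_eq}). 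For $\alpha\in\Delta_{I+}$ one has $\alpha^\vee\in Q_I^\vee$, so by the defining property of $\iota^*$ the value $n_\alpha=\langle\alpha^\vee,\lambda\rangle=\langle\alpha^\vee,\mu\rangle$ depends only on $\mu$; hence the factor $\prod_{\alpha\in\Delta_{I+}}n_\alpha^{-s_\alpha}$ pulls out as the displayed $\prod_{\alpha\in\Delta_{I+}}\langle\alpha^\vee,\mu\rangle^{-s_\alpha}$. This reduces the theorem to the transverse identity asserting that, for each fixed $\mu\in P_{I++}$, the inner sum $\sum e^{2\pi\sqrt{-1}\langle\mathbf{y},\lambda\rangle}\prod_{\alpha\in\Delta^*}n_\alpha^{-k_\alpha}$ taken over $\{\lambda\in P : \iota^*(\lambda)=\mu,\ n_\alpha\neq 0\ (\alpha\in\Delta)\}$ equals $(-1)^{\abs{\Delta^*}}\bigl(\prod_{\alpha\in\Delta^*}(2\pi\sqrt{-1})^{k_\alpha}/k_\alpha!\bigr)\,P(\mathbf{k},\mathbf{y},\mu;I;\Delta)$. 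Since $s_\alpha=k_\alpha\in\mathbb{N}$ holds precisely on $\Delta^*$, these are the only directions to which the Bernoulli machinery must be applied.

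The transverse fibre $\{\lambda\in P:\iota^*(\lambda)=\mu\}$ is a coset of a rank-$d$ lattice. Next I would evaluate the transverse sum by the generating-function method indexed by the bases $\mathbf{V}=\mathbf{V}_I\cup\Psi_I\in\mathscr{V}_I$: for a given $\mathbf{V}_I=\{\gamma_1,\ldots,\gamma_d\}$ the integers $n_\beta$ ($\beta\in\mathbf{V}_I$) serve as summation variables, while every remaining $\gamma\in\Delta^*\setminus\mathbf{V}_I$ is constrained by the linear relation $n_\gamma=\sum_{\beta\in\mathbf{V}_I}\langle\gamma^\vee,\mu^{\mathbf{V}}_\beta\rangle n_\beta+\langle\gamma^\vee,p_{\mathbf{V}_I^\perp}(\mu)\rangle$, which follows from $\lambda=\sum_{\beta\in\mathbf{V}}\langle\beta^\vee,\lambda\rangle\mu^{\mathbf{V}}_\beta$ together with \eqref{eq:proj} and the identity $p_{\mathbf{V}_I^\perp}(\lambda)=p_{\mathbf{V}_I^\perp}(\mu)$; here Lemma \ref{lm:indept_bases} guarantees that the projection is intrinsic to $\mathbf{V}_I$. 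Along each variable $\beta\in\mathbf{V}_I$ I apply the one-dimensional Fourier expansion of the periodic Bernoulli functions, $\sum_{n\neq 0}n^{-k}e^{2\pi\sqrt{-1}nx}=-\bigl((2\pi\sqrt{-1})^k/k!\bigr)B_k(\{x\})$, whose exponential generating function is $te^{t\{x\}}/(e^t-1)$; this produces the product over $\mathbf{V}_I$ in \eqref{eq:exp_F}, the fractional parts $\{\mathbf{y}+q\}_{\mathbf{V},\beta}$, and the branch choices fixed by $\sgn\langle\phi,\mu^{\mathbf{V}}_\beta\rangle$. The dependent directions $\Delta^*\setminus\mathbf{V}_I$, after passing to the generating variables $t_\gamma$, supply the rational factors $t_\gamma/\bigl(t_\gamma-\sum_{\beta\in\mathbf{V}_I}t_\beta\langle\gamma^\vee,\mu^{\mathbf{V}}_\beta\rangle-2\pi\sqrt{-1}\langle\gamma^\vee,p_{\mathbf{V}_I^\perp}(\mu)\rangle\bigr)$ through the same linear relations, and the discrepancy between $L(\mathbf{V}^\vee)$ and $Q^\vee$ is repaired by averaging over the finite group $Q^\vee/L(\mathbf{V}^\vee)$, giving the coset sum and the index factor $\abs{Q^\vee/L(\mathbf{V}^\vee)}^{-1}$. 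Assembling the contributions of all $\mathbf{V}\in\mathscr{V}_I$ reproduces \eqref{eq:exp_F}, and reading off the coefficient of $\prod_{\alpha\in\Delta^*}t_\alpha^{k_\alpha}/k_\alpha!$ via \eqref{eq:def_F} yields \eqref{eq:func_eq}.

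The hardest part will be the analytic justification of these formal steps, chiefly the convergence of the transverse sum and the legitimacy of interchanging it with the Fourier expansions, since the series $\sum_{n\neq 0}n^{-k}e^{2\pi\sqrt{-1}nx}$ is only conditionally convergent when $k=1$. This is exactly the role of hypothesis (\#): an irreducible component of type $A_1$ contributes a genuinely one-dimensional, uncoupled direction that gains no extra decay from its neighbours, forcing $k_\alpha\geq 2$, whereas in components of rank at least two the coupling among the coordinates restores convergence and $k_\alpha=1$ is admissible. A second delicate point is combinatorial: one must check that the family $\{\mathbf{V}_I\}$ organizes the transverse lattice so that the contributions assemble into the sum over $\mathscr{V}_I$ without overcounting, and that the coset average over $Q^\vee/L(\mathbf{V}^\vee)$ correctly compensates the change of lattice while keeping the branch conventions encoded by $\phi$ consistent across all bases. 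Once these convergence and bookkeeping issues are settled, the comparison of generating functions and the extraction of the relevant coefficient are purely formal.
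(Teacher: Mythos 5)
Your first paragraph is exactly the paper's own opening move: the decomposition of \eqref{2-1} according to the $P_{I++}$-component, with the factor $\prod_{\alpha\in\Delta_{I+}}\langle\alpha^\vee,\lambda\rangle^{-s_\alpha}$ pulled out of a transverse sum over the rank-$d$ fiber, is precisely \eqref{S-decomp}--\eqref{def_Sigma_lambda} in Section \ref{sec8}, and the transverse identity you isolate is equivalent to what remains to be proved there. Your reading of hypothesis (\#) (a type-$A_1$ component gives an uncoupled direction, forcing $k_\alpha\geq 2$) is also the right heuristic: in the paper it becomes the condition $\widetilde{\Lambda}\cap\Lambda_1=\emptyset$ needed for convergence of the symmetric partial sums \eqref{rev-1}.

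The gap is in your second paragraph, and it is not a matter of ``bookkeeping'': you assume the very decomposition that constitutes the theorem. The transverse sum is a \emph{single} lattice sum whose summand is the full product $\prod_{\gamma\in\Delta^*}n_\gamma^{-k_\gamma}$; for a fixed basis $\mathbf{V}_I$ the free variables $(n_\beta)_{\beta\in\mathbf{V}_I}$ occur in every factor $n_\gamma^{-k_\gamma}$, $\gamma\in\Delta^*\setminus\mathbf{V}_I$, through the linear relations you wrote, so the one-dimensional expansion $\sum_{n\neq0}n^{-k}e^{2\pi\sqrt{-1}nx}=-((2\pi\sqrt{-1})^{k}/k!)\,B_k(\{x\})$ cannot be applied independently along the $\mathbf{V}_I$-directions while the remaining factors are ``passed to the generating variables $t_\gamma$'': there is no well-defined contribution of a single basis to the sum until one has a device (a partial-fraction or residue decomposition, over bases of the arrangement, of the product of powers of coupled linear forms) that splits the sum, and establishing that device---together with the justification of all rearrangements when some $k_\alpha=1$, where the series converge only conditionally---is the actual mathematical content. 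In the paper this content is not proved in situ but imported from the companion paper: the transverse sum is identified with a lattice sum of a hyperplane arrangement $S(\mathbf{k},\mathbf{y}';\Lambda)$ with $\Lambda=\{(\pi_{I^c}(\alpha^\vee),\langle\alpha^\vee,\lambda\rangle)\}_{\alpha\in\Delta^*}$, whose convergence is \cite[Theorem 2.2]{KMT2014} and whose evaluation as the Taylor coefficients of the basis-indexed generating function \eqref{eq:exp_F-rev} is \cite[Theorem 2.5]{KMT2014}; the remainder of Section \ref{sec8} is the dictionary $\mathscr{B}(\Lambda)\leftrightarrow\mathscr{V}_I$, $\mathbb{Z}^d/\langle\vec{B}\rangle\simeq Q^\vee/L(\mathbf{V}^\vee)$, $\langle\vec{f}_\gamma,\vec{f}_\alpha^B\rangle'=\langle\gamma^\vee,\mu_\alpha^{\mathbf{V}}\rangle$. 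Your proposal restates the conclusion of that machinery---correctly, down to the coset average, the role of $\phi$, and the affine shifts $2\pi\sqrt{-1}\langle\gamma^\vee,p_{\mathbf{V}_I^\perp}(\lambda)\rangle$---but does not prove it, so the argument is incomplete exactly at its core step.
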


In the case $I=\emptyset$, clearly $\Psi_I=\emptyset$, $\mathbf{V}_I=\mathbf{V}$, 
$\mathscr{V}_I=\mathscr{V}$, $\Delta^*=\Delta_+$, $P_I=\{\mathbf{0}\}$, and hence the only
possible $\lambda$ is $\lambda=\mathbf{0}$.
Write $\mathbf{t}=\mathbf{t}_{\emptyset}=(t_{\alpha})_{\alpha\in\Delta_+}$.
Then we see that 
\begin{equation}
  \begin{split}
    F(\mathbf{t},\mathbf{y};\Delta)
    &=
    F(\mathbf{t},\mathbf{y};\mathbf{0};\emptyset;\Delta)
    \\
    &=
    \sum_{\mathbf{V}\in\mathscr{V}}
    \left(\prod_{\gamma\in \Delta_+\setminus\mathbf{V}}
    \frac{t_\gamma}
    {t_\gamma-\sum_{\beta\in\mathbf{V}}
      t_\beta\langle\gamma^\vee,\mu^{\mathbf{V}}_\beta\rangle}\right)
    \\
    &\qquad\times
    \frac{1}{\abs{Q^\vee/L(\mathbf{V}^\vee)}}
    \sum_{q\in Q^\vee/L(\mathbf{V}^\vee)}
    \prod_{\gamma\in\mathbf{V}}
    \frac{t_\gamma\exp
      (t_\gamma\{\mathbf{y}+q\}_{\mathbf{V},\gamma})}{e^{t_\gamma}-1},
  \end{split}
\end{equation}
which is exactly equal to \cite[(4.3)]{KMTLondon}.
Therefore Theorem \ref{thm:main1} is a generalization of \cite[Theorem 4.1]{KMTLondon}.
We note that, when $\Delta=\Delta(A_1)$, it follows that
\begin{align}\label{classical1}
F(t,y;A_1)=\frac{te^{t\{y\}}}{e^t-1}
\end{align}
(see \cite[(3.14)]{KMTLondon}), the usual generating function of classical Bernoulli
functions $B_k(\cdot)$, that is
\begin{align}\label{classical2}
\frac{te^{t\{y\}}}{e^t-1}=\sum_{k=0}^{\infty}B_k(\{y\})\frac{t^k}{k!}.
\end{align}
Therefore we may regard that $P(\mathbf{k},\mathbf{y},\lambda;I;\Delta)$ is a
root-theoretic generalization of classical Bernoulli functions.

The fundamental philosophy of the present paper is to combine two expressions of
$S(\mathbf{s},\mathbf{y};I;\Delta)$, that is \eqref{sahen} and \eqref{eq:func_eq}.
It will produce the relation of the form
\begin{align}\label{phil}
&(\mbox{A signed sum of zeta-functions of root systems})\\
&\quad=(\mbox{A sum involving Bernoulli functions of root systems}).\notag
\end{align}
If the left-hand side of \eqref{phil} does not vanish identically, then this relation
gives a non-trivial functional relation among zeta-functions of root systems.
In Sections \ref{sec3} and \ref{sec4} we will show explicit
computations on the right-hand side of \eqref{phil}, and in Section \ref{sec7} 
we state several examples. 
On the other hand, concerning the left-hand side, it is important to know when
it does not vanish.    A criterion of the non-vanishing in terms of
Poincar{\'e} polynomials will be given in Sections \ref{sec5} and \ref{sec6}.

The proof of Theorems \ref{thm:main1} itself will be given in Section \ref{sec8}.
It is to be stressed that, though Theorem \ref{thm:main1} is a generalization of the previous
result in \cite{KMTLondon} where the case $I=\emptyset$ was treated, there are much more
technical difficulties when we consider the case of general $I$.    To overcome those
difficulties, the authors had to write a separate paper \cite{KMT2014}, whose results will be used
essentially in the course of the proof of Theorem \ref{thm:main1}.

Another main result in the present paper is the following converse theorem.
The generating function $F(\mathbf{t}_I,\mathbf{y},\lambda;I;\Delta)$ for general $I$ 
can be deduced,
in the following sense, from the case $I=\emptyset$.   That is, the generating function for 
$I=\emptyset$ knows `everything'.    This theorem will be proved in the last section.

\begin{theorem}\label{thm:main2}
  Let $I\subset\{1,\ldots,r\}$. For $\lambda\in P_{I++}$, we have
  \begin{equation}\label{egregium}
    F(\mathbf{t}_I,\mathbf{y},\lambda;I;\Delta)
    =
    \Res_{\substack{t_\alpha=2\pi\sqrt{-1}\langle\alpha^\vee,\lambda\rangle\\\alpha\in\Delta_{I+}}}
    \Bigl(\prod_{\alpha\in\Delta_{I+}}\frac{1}{t_\alpha}\Bigr)
    F(\mathbf{t},\mathbf{y};\Delta),
  \end{equation}
where $\mathbf{t}_I=(t_\alpha)_{\alpha\in\Delta^*}$ and the meaning of $\Res$ is as
explained in Remark \ref{def_residue} below.
\end{theorem}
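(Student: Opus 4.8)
The plan is to compare the two explicit expressions for the generating functions, one for general $I$ given in \eqref{eq:exp_F} and the other for $I=\emptyset$, and to realize the passage from the latter to the former as an iterated residue in the variables $t_\alpha$ with $\alpha\in\Delta_{I+}$. First I would analyze how a single basis $\mathbf{V}\in\mathscr{V}$ (an element of $\mathscr{V}_\emptyset$) relates to the bases $\mathbf{U}\in\mathscr{V}_I$ appearing on the right of \eqref{eq:exp_F}: writing $\mathbf{V}=\mathbf{V}_I\cup\Phi_I$ where $\Phi_I=\mathbf{V}\cap\Delta_{I+}$ is a linearly independent subset of $\Delta_{I+}$, the summation over $\mathscr{V}$ factors, up to combinatorial bookkeeping, into a sum over the ``transverse'' part $\mathbf{V}_I$ and the ``internal'' part $\Phi_I$ spanning $V_I$. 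The key structural input here is Lemma~\ref{lm:indept_bases}, which guarantees that the projection $p_{\mathbf{V}_I^\perp}$ — and hence the factors depending only on it in \eqref{eq:exp_F} — is insensitive to the choice of $\Phi_I$; this is what will let me pull the $\mathbf{V}_I$-dependent factors outside the internal sum.

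Next I would compute the iterated residue on the right-hand side of \eqref{egregium} term by term over $\mathbf{V}\in\mathscr{V}$. For a fixed $\mathbf{V}=\mathbf{V}_I\cup\Phi_I$, the factor $F(\mathbf{t},\mathbf{y};\Delta)$ contributes, for the variables $t_\beta$ with $\beta\in\Phi_I$, the product $\prod_{\beta\in\Phi_I} t_\beta\exp(t_\beta\{\mathbf{y}+q\}_{\mathbf{V},\beta})/(e^{t_\beta}-1)$, while the transverse variables $t_\gamma$ with $\gamma\in\Delta^*\setminus\mathbf{V}_I$ appear in the rational factors. The operator $\Res$ evaluates each $t_\alpha$ ($\alpha\in\Delta_{I+}$) at $2\pi\sqrt{-1}\langle\alpha^\vee,\lambda\rangle$ after multiplying by $\prod_{\alpha\in\Delta_{I+}}t_\alpha^{-1}$, so I would track how these substitutions act on the $t_\beta$-factors for $\beta\in\Phi_I$ and on the remaining transverse denominators. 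The essential point is that the substitution $t_\beta\mapsto 2\pi\sqrt{-1}\langle\beta^\vee,\lambda\rangle$ turns the $\Phi_I$-part of the exponential sum over $q\in Q^\vee/L(\mathbf{V}^\vee)$ into exactly the character sum and exponential factors $\exp(2\pi\sqrt{-1}\langle\mathbf{y}+q,p_{\mathbf{V}_I^\perp}(\lambda)\rangle)$ that appear in \eqref{eq:exp_F}, and it converts the transverse rational factors into the form displayed there with the $p_{\mathbf{V}_I^\perp}(\lambda)$-shift.

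The main obstacle I anticipate is the careful treatment of the residue itself, both its precise definition (deferred to Remark~\ref{def_residue}) and the bookkeeping of lattice indices $\abs{Q^\vee/L(\mathbf{V}^\vee)}$ versus $\abs{Q^\vee/L(\mathbf{U}^\vee)}$ as the internal basis changes. Because several distinct bases $\mathbf{V}\in\mathscr{V}$ share the same transverse part $\mathbf{V}_I$ but differ in $\Phi_I$, I will need to show that summing the residues over all such $\mathbf{V}$ reproduces a single term of \eqref{eq:exp_F} indexed by $\mathbf{V}_I$; here the subtlety is that the internal sum over $\Phi_I$ together with the quotient-lattice averaging must collapse, after the residue evaluation, to the clean Bernoulli-type factor for the root subsystem $\Delta_I$. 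Making this collapse rigorous — in effect, recognizing the iterated residue of the $I=\emptyset$ generating function restricted to the internal directions as the defining expression of $F$ for the subsystem — is where I expect the real work to lie, and I would isolate it as the crux, reducing everything else to the factorization supplied by Lemma~\ref{lm:indept_bases} and a direct comparison of the two formulas.
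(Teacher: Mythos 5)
Your overall skeleton --- a term-by-term residue computation over $\mathbf{V}\in\mathscr{V}$, followed by an application of Lemma \ref{lm:indept_bases} and a matching of the surviving terms with \eqref{eq:exp_F} --- is also the paper's skeleton, but the combinatorial mechanism you place at the center is not the one that actually operates, and as stated your plan has a genuine gap. First, the decomposition $\mathbf{V}=\mathbf{V}_I\cup\Phi_I$ with $\Phi_I=\mathbf{V}\cap\Delta_{I+}$ spanning $V_I$ fails for most $\mathbf{V}\in\mathscr{V}$: for example, in $A_2$ with $I=\{1\}$ the basis $\mathbf{V}=\{\alpha_2,\alpha_1+\alpha_2\}$ meets $\Delta_{I+}$ in the empty set. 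So the sum over $\mathscr{V}$ does not factor into ``transverse'' and ``internal'' parts; instead one must \emph{prove} that every non-decomposable $\mathbf{V}$ contributes zero to the iterated residue. Second, even among decomposable $\mathbf{V}$ the phenomenon is not the collapse you anticipate (many internal completions recombining into a single term of \eqref{eq:exp_F}) but a selection: once the order $\beta_1,\ldots,\beta_n$ of $\Delta_{I+}$ in Remark \ref{def_residue} is fixed, exactly one linearly independent set $\Phi_I=\{\beta_k\;|\;\beta_k\notin[A_{k-1}]\}$ is greedily extracted from that order, the iterated residue of the term attached to $\mathbf{V}$ vanishes unless $\Phi_I\subset\mathbf{V}$, and the surviving $\mathbf{V}$ are in one-to-one correspondence with $\mathscr{V}_I$, one term to one term. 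This is precisely the content of the paper's Lemma \ref{residue} with its condition \eqref{eq:condj}, proved via the flag $A_0\subset A_1\subset\cdots\subset A_n$ and a case analysis; nothing in your proposal supplies these vanishing statements.

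The reason your ``simultaneous substitution'' picture cannot be repaired cheaply is that the evaluation point genuinely lies on singular hyperplanes (this is why Remark \ref{def_residue} insists on iterated residues along generic paths), and the per-variable outcome depends on the order of iteration. Concretely, for an internal root $\gamma\in\Delta_{I+}\setminus\mathbf{V}$ one has $\gamma^\vee=\sum_{\beta\in\Phi_I}a_\beta\beta^\vee$, so the relevant factor of $F(\mathbf{t},\mathbf{y};\Delta)$ is $t_\gamma/(t_\gamma-\sum_{\beta\in\Phi_I}a_\beta t_\beta)$: taking the residue in $t_\gamma$ \emph{before} the variables $t_\beta$ are evaluated gives $0$, since the factor is then regular at $t_\gamma=2\pi\sqrt{-1}\langle\gamma^\vee,\lambda\rangle$; only \emph{after} the residues in the spanning variables have been taken does it become $t_\gamma/(t_\gamma-2\pi\sqrt{-1}\langle\gamma^\vee,\lambda\rangle)$, whose residue against $1/t_\gamma$ equals $1$ (Case 1-2 of Lemma \ref{residue}). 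Thus individual terms have order-dependent residues; only the full sum is order-independent, and in the paper this independence --- which Remark \ref{def_residue} declares to be part of the statement of the theorem, and which your proposal does not address --- is obtained only a posteriori, because the identified limit \eqref{eq:exp_F} manifestly does not depend on the order. Your endgame (using Lemma \ref{lm:indept_bases} to replace the completion $\Phi_I$ by $\Psi_I$, plus the lattice-index bookkeeping) is indeed how the paper finishes, but the crux is the stepwise residue lemma, which your plan would still have to discover and prove.
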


\begin{remark}\label{def_residue}
The function 
$\widetilde{F}(\mathbf{t},\mathbf{y};\Delta)=
(\prod_{\alpha\in\Delta_{I+}}1/t_{\alpha}) F(\mathbf{t},\mathbf{y};\Delta)$ is in
$|\Delta_+|$ complex variables.    
We give an order of the elements of the set $\Delta_{I+}$ by numbering as
$\alpha_1,\ldots,\alpha_n$, where $n=|\Delta_{I+}|$.
The definition of $\Res$ in the above theorem is given by the following iterated
procedure:
\begin{align}\label{iter_residue_F}
\Res_{\substack{t_\alpha=2\pi\sqrt{-1}\langle\alpha^\vee,\lambda\rangle\\\alpha\in\Delta_{I+}}}\widetilde{F}(\mathbf{t},\mathbf{y};\Delta)=
\Res_{t_{\alpha_n}=2\pi\sqrt{-1}\langle\alpha_n^\vee,\lambda\rangle}
  \cdots
\Res_{t_{\alpha_1}=2\pi\sqrt{-1}\langle\alpha_1^\vee,\lambda\rangle}
    \widetilde{F}(\mathbf{t},\mathbf{y};\Delta),
\end{align}
where on the right-hand side, each $\Res$ means the usual residue of a one-variable
function.
In the proof of the theorem we will see that {\it the value \eqref{iter_residue_F} does not
depend on the choice of the order of iteration}.
This is to be regarded as a part of the statement of Theorem \ref{thm:main2}, because
this does not hold in general for iterated residues.
The point $(2\pi\sqrt{-1}\langle\alpha^\vee,\lambda\rangle)_{\alpha\in\Delta_{I+}}$ 
is on some singular hyperplane of $ \widetilde{F}(\mathbf{t},\mathbf{y};\Delta)$,
but the limit process
$t_{\alpha}\to 2\pi\sqrt{-1}\langle\alpha^\vee,\lambda\rangle$ 
of calculating the residue is along some `generic' path in the sense that,
when the limit process on $t_{\alpha_j}$ is carried out, the remaining variables
$(t_{\alpha_{j+1}},\ldots,t_{\alpha_n})$ are located
outside any singular hyperplanes.
\end{remark}

\section{Explicit functional relations ($\abs{I}=(r-1)$ case)}\label{sec3}

In this and the next section we evaluate the generating functions more explicitly,
and deduce explicit functional relations.
We only discuss the cases $|I|=r-1$ and $|I|=1$, because the other cases are much more
complicated.    We use the notation $\delta_{\Box}$, where some condition is inserted
in $\Box$, defined as $\delta_{\Box}=1$ if the condition is satisfied, and $=0$ otherwise.

We first introduce the transposes $p^*_{\mathbf{V}_I^\perp}$ of the projections 
$p_{\mathbf{V}_I^\perp}$ (defined by \eqref{eq:proj}) by
\begin{equation}
  \begin{split}
    \langle u,p_{\mathbf{V}_I^\perp}(v)\rangle&=
    \langle u,v\rangle
    -\sum_{\beta\in\mathbf{V}_I}\langle u,\mu^{\mathbf{V}}_\beta\rangle\langle\beta^\vee,v\rangle
    \\
    &=
    \langle u-\sum_{\beta\in\mathbf{V}_I}\langle u,\mu^{\mathbf{V}}_\beta\rangle\beta^\vee,v\rangle
    \\
    &=
    \langle p^*_{\mathbf{V}_I^\perp}(u),v\rangle
  \end{split}
\end{equation}
for $v\in V$,
that is
\begin{equation}\label{transp_proj}
 p^*_{\mathbf{V}_I^\perp}(u)=u-\sum_{\beta\in\mathbf{V}_I}\langle u,\mu^{\mathbf{V}}_\beta\rangle\beta^\vee.
\end{equation}

Let
$I\subset \{1,\ldots,r\}$.
In this section we consider the situation  $|I^c|=1$, and put $I^c=\{k\}$.
Then $\Psi_I=\{\alpha_i\}_{i\in I}$,
and we see that
$$
\Delta^{*\vee}=\{\alpha^\vee=\sum_{i=1}^r a_i\alpha_i^\vee\in\Delta_+^\vee~|~a_k=\langle\alpha^\vee,\lambda_k\rangle\neq 0\}.
$$ 
Since $|\mathbf{V}_I|=1$ in the present case, we have
\begin{equation}
  \mathscr{V}_I=\{\mathbf{V}=\{\beta\}\cup\Psi_I\}_{\beta\in\Delta^*}.
\end{equation}
For $\mathbf{V}=\{\beta\}\cup\Psi_I\in\mathscr{V}_I$ and
$\gamma\in\Delta^*\setminus\{\beta\}$, 
from \eqref{transp_proj} we have
\begin{equation}\label{def-p-*}
 p^*_{\mathbf{V}_I^\perp}(\gamma^\vee)=\gamma^\vee-\langle\gamma^\vee,\mu^{\mathbf{V}}_\beta\rangle\beta^\vee.
\end{equation}
We put $b_i=b_i(\beta)=\langle\beta^\vee,\lambda_i\rangle$ ($1\leq i\leq r$) so that
\begin{equation}
  \beta^\vee=\sum_{i=1}^r b_i\alpha_i^\vee.
\end{equation}
Then we find 
\begin{equation}\label{quotient}
  \mu_\beta^{\mathbf{V}}=\frac{\lambda_k}{b_k}.  
\end{equation}
Write $\mathbf{y}=y_1\alpha_1^\vee+\cdots+y_r\alpha_r^\vee$
and $\lambda=\sum_{\substack{i=1\\i\neq k}}^r m_i\lambda_i\in P_I$. 
Then
\begin{equation}\label{pppp}
  \begin{split}
    p_{\mathbf{V}_I^\perp}(\lambda)&=\lambda-\frac{\lambda_k}{b_k}\langle\beta^\vee,\lambda\rangle
    \\
    &=\sum_{\substack{i=1\\i\neq k}}^r m_i\lambda_i+\Bigl(-\sum_{\substack{i=1\\i\neq k}}^r\frac{b_i}{b_k}m_i\Bigr)\lambda_k
    \\
    &=\sum_{\substack{i=1\\i\neq k}}^r m_i \Bigl(\lambda_i-\frac{b_i}{b_k}\lambda_k\Bigr).
  \end{split}
\end{equation}
Note that
$Q^\vee/L(\mathbf{V}^\vee)=\{a_k\alpha_k^\vee\}_{0\leq a_k<b_k}$
and $|Q^\vee/L(\mathbf{V}^\vee)|=b_k$.
For $q=a_k \alpha_k^\vee\in Q^\vee$, from \eqref{pppp} we obtain
\begin{equation}
    \langle \mathbf{y}+q,p_{\mathbf{V}_I^\perp}(\lambda)\rangle
    =
    \sum_{\substack{i=1\\i\neq k}}^r m_i \Bigl(y_i-\frac{b_i}{b_k}(y_k+a_k)\Bigr).
\end{equation}
We fix $\phi$ such that $\langle\phi,\lambda_k\rangle>0$. Then for $q=a_k \alpha_k^\vee\in Q^\vee$,
\begin{equation}
  \{\mathbf{y}+q\}_{\mathbf{V},\beta}=\Bigl\{\frac{\langle\mathbf{y}+q,\lambda_k\rangle}{b_k}\Bigr\}=\Bigl\{\frac{y_k+a_k}{b_k}\Bigr\}.
\end{equation}
Substituting the above results into \eqref{eq:exp_F}, 
we obtain the following form of the generating function
(under the identification $\mathbf{y}=(y_i)_{1\leq i\leq r}$ and
$\lambda=(m_i)_{1\leq i(\neq k)\leq r}$):
\begin{multline}\label{Fq-general}
    F((t_\beta)_{\beta\in\Delta^*},(y_i)_{1\leq i\leq r},(m_i)_{1\leq i(\neq k)\leq r};I;\Delta)  
\\
\begin{aligned}
  &=
  \sum_{\beta\in\Delta^*}
  \prod_{\gamma\in\Delta^*\setminus\{\beta\}}\frac{t_\gamma}{t_\gamma-\frac{\langle\gamma^\vee,\lambda_k\rangle}{b_k}t_\beta-2\pi\sqrt{-1}\langle p^*_{\mathbf{V}_I^\perp}(\gamma^\vee),\lambda\rangle}
  \\
  &\qquad\times
\frac{1}{b_k}\sum_{0\leq a_k<b_k}
  \exp\Bigl(2\pi\sqrt{-1}
  \sum_{\substack{i=1\\i\neq k}}^r m_i \Bigl(y_i-\frac{b_i}{b_k}(y_k+a_k)
  \Bigr)
  \Bigr)
  \frac{t_\beta\exp\Bigl(t_\beta\Bigl\{\dfrac{y_k+a_k}{b_k}\Bigr\}\Bigr)}{e^{t_\beta}-1},
\end{aligned}
\end{multline}
where $b_i=b_i(\beta)=\langle\beta^\vee,\lambda_i\rangle$.

For $\gamma^\vee\in\Delta^{*\vee}$, we see that
\begin{equation}
\label{eq:prj_rt}
    p^*_{\mathbf{V}_I^\perp}(\gamma^\vee)
    =\gamma^\vee-\frac{\langle\gamma^\vee,\lambda_k\rangle}{b_k}\beta^\vee
\in\frac{1}{b_k}Q_I^\vee,
\end{equation}
which may not be proportional to a coroot in $\Delta_I$. Therefore, 
observing \eqref{Fq-general} with \eqref{eq:prj_rt} we see that 
$S(\mathbf{s},\mathbf{y};I;\Delta)$ may not be necessarily written in terms of zeta-functions of root systems of lower ranks.
However, in the classical root systems of type $A,B,C$, special choices of $I$ give rise to recursive structures among zeta-functions of root systems.
To show this, 
in the following we will give explicit forms of 
\eqref{eq:prj_rt}.
Moreover we will present associated functional relations in the $A_r$ cases.


\subsection{$A_r$ Case}
We realize $\Delta_+^{\vee}(A_r)=\{e_i-e_j~|~1\leq i<j\leq r+1\}$,
where $e_j$ is the $j$th unit vector in $\mathbb{R}^{r+1}$. 
Then the variable $\mathbf{s}$ can be parametrized as
$\mathbf{s}=(s_{ij})_{1\leq i<j\leq r}$, and 
the zeta-function of type $A_r$ reads as
\begin{equation}
  \zeta_r((s_{ij})_{1\leq i<j\leq r},(y_i)_{1\leq i\leq r};A_r)
  =\sum_{m_1=1}^{\infty}\cdots
  \sum_{m_r=1}^{\infty}\frac{\exp(2\pi\sqrt{-1}\sum_{1\leq i\leq r}m_iy_i)}{\prod_{1\leq i<j\leq r+1}(m_i+\cdots+
    m_{j-1})^{s_{ij}}}.
\end{equation}
Choose $I=\{2,\ldots,r\}$ and $I^c=\{1\}$ as in the following diagram.

\begin{equation}
\setlength{\unitlength}{1pt}\begin{picture}(0,0)
\put(149,2){\oval(238,28)}
\end{picture}
\VN{\alpha_1}\E\VN{\alpha_2}\E\V\EO\V\E\V\E\V\E\VN{\alpha_{r}}
\end{equation}

Then 
\begin{equation}
  \Psi_I^{\vee}=\{\alpha_2^{\vee}=e_2-e_3,\ldots,\alpha_r^{\vee}=e_r-e_{r+1}\}
\end{equation}
and $\Delta^{*\vee}=\{e_1-e_j~|~2\leq j\leq r+1\}$. 
Hence 
$$\mathscr{V}_I^{\vee}=
\{\mathbf{V}^{\vee}=\{e_1-e_j\}\cup\Psi_I^{\vee}\}_{2\leq j\leq r+1}.$$

Temporarily we fix one
$$
\beta^\vee=e_1-e_j=\alpha_1^\vee+\cdots+\alpha_{j-1}^\vee
$$ 
($2\leq j\leq r+1$).   Then we see that
\begin{equation}
  b_i=\langle\beta^\vee,\lambda_i\rangle=
  \begin{cases}
    1\qquad&(1\leq i<j),\\
    0\qquad&(j\leq i\leq r),
  \end{cases}
\end{equation}
and in particular $\mu_\beta^{\mathbf{V}}=\lambda_1/b_1=\lambda_1$ by \eqref{quotient}.
Since $\mathbf{V}_I^{\vee}=\{\beta^{\vee}\}$, 
for $\gamma^{\vee}=e_1-e_i\in\Delta^{*\vee}$ ($i\neq j$), from \eqref{def-p-*} we have
\begin{equation}
  \begin{split}
    p^*_{\mathbf{V}_I^\perp}(\gamma^\vee)
    &=e_1-e_i-\langle e_1-e_i,\lambda_1\rangle(e_1-e_j)
    \\
    &=e_1-e_i-\langle \alpha_1^\vee+\cdots+\alpha_{i-1}^\vee,\lambda_1\rangle(e_1-e_j)
    \\
    &=e_1-e_i-(e_1-e_j)
    \\
    &=e_j-e_i\in\Delta_I^\vee.
  \end{split}
\end{equation}
Since
$$
e_j-e_i=\left\{
\begin{array}{ll}
   -(\alpha_i^{\vee}+\cdots+\alpha_{j-1}^{\vee}) & (2\leq i\leq j-1),\\
   \alpha_j^{\vee}+\cdots+\alpha_{i-1}^{\vee} & (j+1\leq i\leq r+1),
\end{array}\right.
$$
we find that $\langle p^*_{\mathbf{V}_I^\perp}(\gamma^\vee), \lambda\rangle=-m_{ij}$
for $\lambda=m_2\lambda_2+\cdots+m_r\lambda_r$, where
\begin{align}
m_{ij}=\left\{
\begin{array}{ll}
   m_i+\cdots+m_{j-1} & (2\leq i\leq j-1),\\
   -(m_j+\cdots+m_{i-1})  &  (j+1\leq i\leq r+1).
\end{array}
\right.
\end{align}   

 By putting $t_{e_1-e_i}=t_i$ for $2\leq i\leq r+1$, we can deduce from \eqref{Fq-general}
(with $k=1$) the following form of the generating function:
\begin{multline}\label{gen-funct-A}
    F((t_i)_{2\leq i\leq r+1},(y_i)_{1\leq i\leq r},(m_i)_{2\leq i\leq r};\{2,\ldots,r\};A_r)  
\\
\begin{aligned}
  &=
\sum_{j=2}^{r+1}
\prod_{\substack{2\leq i\leq r+1 \\ i\neq j}}\frac{t_i}{t_i-t_j+2\pi\sqrt{-1}m_{ij}}
\\
&\qquad\times
\exp\Bigl(2\pi\sqrt{-1}
\Bigl(
    \sum_{i=2}^{j-1}m_i(y_i-y_1)+\sum_{i=j}^r m_iy_i
\Bigr)
\Bigr)
\frac{t_j\exp(t_j\{y_1\})}{e^{t_j}-1}.
\end{aligned}
\end{multline}

Next we calculate the Taylor expansion of the right-hand side of the above formula.
First we note that
\begin{equation}
  \begin{split}
    \frac{t_i}{t_i-t_j+1/x_{ij}}
    &=  
    \frac{t_i x_{ij}}{1-(t_j-t_i)x_{ij}}\\
    &=  
    \sum_{n=0}^\infty x_{ij}^{n+1} t_i(t_j-t_i)^n 
    \\
    &=
    \sum_{n=0}^\infty x_{ij}^{n+1}
    \sum_{\substack{k,l\geq0\\k+l=n}}
    (-1)^{k}\binom{n}{l}t_j^lt_i^{k+1}
    \\
    &=
    \sum_{k\geq1,l\geq0}
    (-1)^{k-1}
    \binom{k+l-1}{l}x_{ij}^{k+l}t_j^lt_i^k
  \end{split}
\end{equation}
(with a certain indeterminate $x_{ij}$).
By use of this result and \eqref{classical2} we have
\begin{align*}
  &\Bigl(\prod_{\substack{2\leq i\leq r+1\\i\neq j}}\frac{t_i}{t_i-t_j+1/x_{ij}}\Bigr)\frac{t_j\exp(t_j\{y_1\})}{e^{t_j}-1}
  \\
  &=
  \sum_{\substack{k_i\geq1\ (i\neq j)\\l_i\geq 0\ (i\neq j)\\l_j\geq0}}
  \Bigl(
   \prod_{\substack{2\leq i\leq r+1\\i\neq j}}
  (-1)^{k_i-1}
  \binom{k_i+l_i-1}{l_i}x_{ij}^{k_i+l_i}t_j^{l_i}t_i^{k_i}\Bigr)
  B_{l_j}(\{y_1\})\frac{t_j^{l_j}}{l_j!}
  \\
  &=
  \sum_{\substack{k_i\geq1\ (i\neq j)\\k_j\geq0}}
  \Bigl(
  \sum_{\substack{l_2,\ldots,l_{r+1}\geq0\\l_2+\cdots+l_{r+1}=k_j}}
  \frac{B_{l_j}(\{y_1\})}{l_j!}
  \prod_{\substack{2\leq i\leq r+1\\i\neq j}}
  (-1)^{k_i-1}
  \binom{k_i+l_i-1}{l_i}x_{ij}^{k_i+l_i}\Bigr)
  \\
  &\qquad\times
  \prod_{2\leq i\leq r+1}t_i^{k_i}.
\end{align*}
Applying this to the right-hand side of \eqref{gen-funct-A}, we obtain the following 
conclusion.
\begin{theorem}\label{Th-Fq-Ar}
In the case $\Delta=\Delta(A_r)$ and $I=\{2,\ldots,r\}$, we have
  \begin{multline}
    F((t_i)_{2\leq i\leq r+1},(y_j)_{1\leq j\leq r},(m_i)_{2\leq i\leq r};\{2,\ldots,r\};A_r)  
\\
\begin{aligned}
  &=
\sum_{j=2}^{r+1}
\prod_{\substack{2\leq i\leq r+1 \\ i\neq j}}\frac{t_i}{t_i-t_j+2\pi\sqrt{-1}m_{ij}}
\\
&\qquad\times
\exp\Bigl(2\pi\sqrt{-1}
\Bigl(
    \sum_{i=2}^{j-1}m_i(y_i-y_1)+\sum_{i=j}^r m_iy_i
\Bigr)
\Bigr)
\frac{t_j\exp(t_j\{y_1\})}{e^{t_j}-1}
\\
&=\sum_{k_2,\ldots,k_{r+1}\geq 0}P((k_i)_{2\leq i\leq r+1},(y_i)_{1\leq i\leq r},(m_i)_{2\leq i\leq r};\{2,\ldots,r\};A_r)\frac{t_2^{k_2}\cdots t_{r+1}^{k_{r+1}}}{k_2!\cdots k_{r+1}!},
\end{aligned}
\end{multline}
where
\begin{multline}\label{P-explicit}
P((k_i)_{2\leq i\leq r+1},(y_i)_{1\leq i\leq r},(m_i)_{2\leq i\leq r};\{2,\ldots,r\};A_r)
\\
\begin{aligned}
  &=k_2!\cdots k_{r+1}!
  \sum_{j=2}^{r+1}  \Bigl(\prod_{\substack{i=2\\i\neq j}}^{r+1} \delta_{k_i\neq 0} \Bigr)
  \exp\Bigl(2\pi\sqrt{-1}
  \Bigl(
  \sum_{i=2}^{j-1}m_i(y_i-y_1)+\sum_{i=j}^r m_iy_i
  \Bigr)
  \Bigr)
  \\
  &\times
  \Bigl(
  \sum_{\substack{l_2,\ldots,l_{r+1}\geq0\\l_2+\cdots+l_{r+1}=k_j}}
\frac{B_{l_j}(\{y_1\})}{l_j!}
  \prod_{\substack{2\leq i\leq r+1\\i\neq j}}
    (-1)^{k_i-1}
    \binom{k_i+l_i-1}{l_i}\Bigl(\frac{1}{2\pi\sqrt{-1}m_{ij}}\Bigr)^{k_i+l_i}\Bigr).
\end{aligned}
\end{multline}
\end{theorem}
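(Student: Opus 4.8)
The first equality in the theorem is nothing but \eqref{gen-funct-A}, which has already been obtained by specializing the general formula \eqref{Fq-general} to $\Delta=\Delta(A_r)$ and $I=\{2,\ldots,r\}$ (using $k=1$, $\mu^{\mathbf{V}}_\beta=\lambda_1$, and the explicit value $\langle p^*_{\mathbf{V}_I^\perp}(\gamma^\vee),\lambda\rangle=-m_{ij}$). So the only content to prove is the second equality, namely the identification of the Taylor coefficients $P((k_i)_{2\leq i\leq r+1},\mathbf{y},\lambda;\{2,\ldots,r\};A_r)$ with the closed form \eqref{P-explicit}.

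The plan is a direct power-series extraction. For each fixed $j$ with $2\leq j\leq r+1$, the $\mathbf{t}$-dependent part of the $j$-th summand of \eqref{gen-funct-A} factors as a product of $r-1$ rational factors $t_i/(t_i-t_j+2\pi\sqrt{-1}m_{ij})$ over $i\neq j$, times the single Bernoulli factor $t_j\exp(t_j\{y_1\})/(e^{t_j}-1)$; the exponential $\exp(2\pi\sqrt{-1}(\cdots))$ is independent of $\mathbf{t}$ and may be carried along as a constant. First I would expand each rational factor by the geometric-series identity established just before the theorem, taking the indeterminate to be $x_{ij}=1/(2\pi\sqrt{-1}m_{ij})$, and expand the Bernoulli factor by \eqref{classical2}. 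This reproduces exactly the per-$j$ expansion displayed immediately above the theorem. Summing that expansion over $j$, restoring the exponential factor, and comparing with the defining expansion \eqref{eq:def_F} (which carries the normalizing denominators $k_i!$) then yields \eqref{P-explicit}, the extra factor $k_2!\cdots k_{r+1}!$ arising precisely from clearing those denominators.

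The only real care is in the combinatorial bookkeeping of the exponent of $t_j$ and in the validity of the expansion. Each of the $r-1$ geometric factors contributes an independent power $l_i$ ($i\neq j$) of $t_j$, while the Bernoulli factor contributes a power $l_j$; collecting the total exponent of $t_j$ and demanding that it equal the prescribed $k_j$ produces the constraint $l_2+\cdots+l_{r+1}=k_j$, in which the index $l_j$ is the Bernoulli index and the remaining $l_i$ are the geometric indices. Matching the exponent $k_i$ of $t_i$ for each $i\neq j$ forces $k_i\geq1$ (the numerator $t_i$ begins at degree one), and this positivity is recorded by $\prod_{i\neq j}\delta_{k_i\neq0}$. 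The hard part, such as it is, is making sure the geometric expansion is legitimate: it converges in a neighborhood of $\mathbf{t}=\mathbf{0}$ only when $1/x_{ij}=2\pi\sqrt{-1}m_{ij}\neq0$, that is when $m_{ij}\neq0$. For $\lambda\in P_{I++}$, which is the range relevant for the application of Theorem \ref{thm:main1}, the integers $m_i$ are positive, so each $m_{ij}$ is a nonempty partial sum (up to sign) of positive integers and hence nonzero; this guarantees that the denominators do not vanish at the origin and that the term-by-term coefficient extraction is justified.
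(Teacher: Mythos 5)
Your proposal is correct and follows essentially the same route as the paper: the paper's own argument consists precisely of the geometric-series expansion of each factor $t_i/(t_i-t_j+1/x_{ij})$ with $x_{ij}=1/(2\pi\sqrt{-1}m_{ij})$, the expansion of the Bernoulli factor via \eqref{classical2}, and the collection of coefficients subject to $l_2+\cdots+l_{r+1}=k_j$ with the positivity constraints recorded by $\delta_{k_i\neq 0}$. Your added remark that $m_{ij}\neq 0$ for $\lambda\in P_{I++}$ (justifying the expansion near $\mathbf{t}=\mathbf{0}$) is a point the paper leaves implicit, but it is accurate and consistent with the paper's intended range of application.
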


As an application of this theorem, we will give explicit functional relations.
We compute \eqref{sahen} and \eqref{eq:func_eq} in the case of type $A_r$,
with $s_{1j}=k_{1j}\in\mathbb{N}$ for $2\leq j\leq r+1$.

First, it is well known that $W(A_r)\simeq \mathfrak{S}_{r+1}$, the $(r+1)$-th symmetric 
group, as the permutation group on $\{e_1,\ldots,e_{r+1}\}$.
Via this identification 
we see that
\begin{align}
  W^I=&\{\id,\sigma_1,\sigma_1\sigma_2,\ldots,\sigma_1\sigma_2\cdots \sigma_r\}\\
  =&\{\id,(1\;2),(1\;2\;3),\ldots,(1\;2\cdots  r+1)\}\notag
\end{align}
where $\sigma_j=(j\; j+1)$.
In fact, we can easily check that 
$$w=\sigma_1\sigma_2\cdots\sigma_j=(1\;2\cdots j+1)$$ 
for $0\leq j\leq r$ satisfies $w^{-1}\Delta_{I+}^\vee\subset\Delta_+^\vee$, and those elements exhausts $W^I$
because $|W^I|=|W|/|W_I|=(r+1)!/r!=r+1$ (see \eqref{W-decomp} below).

Next, for $2\leq i\leq r$ and $1\leq j\leq r$, we have
\begin{align}
  \sigma_j\cdots \sigma_1\alpha_1^\vee&=e_{j+1}-e_1=-\alpha_1^\vee-\cdots-\alpha_j^\vee,\\
  \sigma_j\cdots \sigma_1\alpha_i^\vee&=
                              \begin{cases}
                                \alpha_{i-1}^\vee\qquad&(i\leq j),\\
                                \alpha_{i-1}^\vee+\alpha_i^\vee\qquad&(i=j+1),\\
                                \alpha_i^\vee\qquad&(i\geq j+2).
                              \end{cases}
\end{align}
Therefore, for $w=\sigma_1\cdots \sigma_j\in W^I$, we have
\begin{align*}
&w^{-1}\mathbf{y}=\sigma_j\cdots \sigma_1(y_1\alpha_1^{\vee}+\cdots+y_r\alpha_r^{\vee})\\
&=-y_1(\alpha_1^\vee+\cdots+\alpha_j^\vee)
+\sum_{i=2}^j y_i\alpha_{i-1}^{\vee}+y_{j+1}(\alpha_j^{\vee}+\alpha_{j+1}^{\vee})
+\sum_{i=j+2}^r y_i\alpha_i^{\vee}\\
&=\sum_{i=1}^j(y_{i+1}-y_1)\alpha_i^\vee+
\sum_{i=j+1}^r y_i\alpha_i^\vee.
\end{align*}

Lastly for $w=\sigma_1\cdots \sigma_j\in W^I$, we can see that
\begin{equation}
  \Delta_{w^{-1}}=\{\alpha_1^{\vee}+\cdots+\alpha_i^{\vee}=e_1-e_{i+1}~|~1\leq i\leq j\}.
\end{equation}
Therefore from \eqref{sahen} we obtain
\begin{align}\label{sahen-lin-comb}
  &S(\mathbf{s},\mathbf{y};\{2,\ldots,r\};A_r)=
  \sum_{j=0}^r\Bigl(\prod_{i=1}^j(-1)^{k_{1,i+1}}\Bigr)\\
&\qquad\times\zeta_r((s_{p_j q_j})_{1\leq p<q\leq r+1},(y_2-y_1,\ldots,y_{j+1}-y_1,y_{j+1},\ldots,y_r);A_r),
\notag
\end{align}
where $p_j=(1\;2\cdots j+1)p$ and $q_j=(1\;2\cdots j+1)q$.

On the other hand, \eqref{eq:func_eq} in the present case reads as
\begin{align*}
&S(\mathbf{s},\mathbf{y};\{2,\ldots,r\};A_r)
=(-1)^r\left(\prod_{j=2}^{r+1}\frac{(2\pi\sqrt{-1})^{k_{1j}}}{k_{1j}!}\right)\\
&\times
\sum_{m_2,\ldots,m_r=1}^{\infty}\left(\prod_{2\leq p<q\leq r+1}\frac{1}
{\langle e_p-e_q,m_2\lambda_2+\cdots+m_r\lambda_r\rangle^{s_{pq}}}\right)\\
&\times
P(\mathbf{k},\mathbf{y},(m_i)_{2\leq i\leq r};\{2,\ldots,r\};A_r)
\end{align*}
with $\mathbf{k}=(k_{1j})_{2\leq j\leq r+1}$.
Substituting \eqref{P-explicit} (with $k_i=k_{1i}$) into the right-hand side, 
we obtain
\begin{align}\label{uhen-lin-comb}
&S(\mathbf{s},\mathbf{y};\{2,\ldots,r\};A_r)\\
&=(-1)^r(2\pi\sqrt{-1})^{\sum_{2\leq i\leq r+1}k_{1i}}
\sum_{j=2}^{r+1}
  \sum_{\substack{l_2,\ldots,l_{r+1}\geq0\\l_2+\cdots+l_{r+1}=k_{1j}}}
\frac{B_{l_j}(\{y_1\})}{l_j!}
\notag\\
&\qquad\times
\Bigl(  \prod_{\substack{2\leq i\leq r+1\\i\neq j}}
    (-1)^{k_{1i}-1}
    \binom{k_{1i}+l_i-1}{l_i}\Bigl(\frac{(-1)^{\delta_{i>j}}}{2\pi\sqrt{-1}}\Bigr)^{k_{1i}+l_i}\Bigr)
\notag\\
&\qquad\times \sum_{m_2,\ldots,m_r=1}^{\infty}
 \exp\Bigl(2\pi\sqrt{-1}
  \Bigl(
  \sum_{i=2}^{j-1}m_i(y_i-y_1)+\sum_{i=j}^r m_iy_i
  \Bigr)
  \Bigr)\notag\\
  &\qquad\times\prod_{\substack{2\leq i\leq r+1\\i\neq j}}
  \frac{1}{|m_{ij}|^{k_{1i}+l_i}}\prod_{2\leq p<q\leq r+1}\frac{1}
  {(m_p+\cdots+m_{q-1})^{s_{pq}}}
\notag\\
&=
(-1)^r(2\pi\sqrt{-1})^{\sum_{2\leq i\leq r+1}k_{1i}}
\sum_{j=2}^{r+1}
  \sum_{\substack{l_2,\ldots,l_{r+1}\geq0\\l_2+\cdots+l_{r+1}=k_{1j}}}
\frac{(-1)^{k_{1,2}+\cdots+k_{1,j-1}+l_{j+1}+\cdots+l_{r+1}-r+1}}{(2\pi\sqrt{-1})^{l_j+\sum_{2\leq i\leq r+1}k_{1i}}}
\notag\\
&\qquad\times
\frac{B_{l_j}(\{y_1\})}{l_j!}
\Bigl(  \prod_{\substack{2\leq i\leq r+1\\i\neq j}}
    \binom{k_{1i}+l_i-1}{l_i}\Bigr)
\notag\\
&\qquad\times
\zeta_{r-1}((s_{pq}+\delta_{p<j}\delta_{q=j}(k_{1p}+l_p)+\delta_{p=j}\delta_{q>j}(k_{1q}+l_q))_{2\leq p<q\leq r+1},
\notag\\
&\qquad\qquad\qquad
(y_2-y_1,\ldots,y_{j-1}-y_1,y_j,\ldots,y_r);A_{r-1}),
\notag
\end{align}
where in the last equality we have used
\begin{equation}
  \sum_{2\leq i\leq r+1,i\neq j}(k_{1i}+l_i)=-l_j+\sum_{2\leq i\leq r+1}k_{1i}.
\end{equation}
Comparing \eqref{sahen-lin-comb} and \eqref{uhen-lin-comb}, we now arrive at the 
following explicit form of functional relations.
\begin{theorem}\label{Th-Fq-2}
For $(s_{ij})_{1\leq i<j\leq r+1}$ with $s_{1j}=k_{1j}$ ($2\leq j\leq r+1$), we have
\begin{align}\label{Fq-rel-1}
&\sum_{j=0}^r\Bigl(\prod_{i=1}^j(-1)^{k_{1,i+1}}\Bigr)\\
  &\qquad\times\zeta_r((s_{p_jq_j})_{1\leq p<q\leq r+1},(y_2-y_1,\ldots,y_{j+1}-y_1,y_{j+1},\ldots,y_r);A_r)
\notag\\
&=
-\sum_{j=2}^{r+1}
  \sum_{\substack{l_2,\ldots,l_{r+1}\geq0\\l_2+\cdots+l_{r+1}=k_{1j}}}
{(-1)^{k_{1,2}+\cdots+k_{1,j-1}+l_{j+1}+\cdots+l_{r+1}}}{(2\pi\sqrt{-1})^{l_j}}
\notag\\
&\qquad\times
\frac{B_{l_j}(\{y_1\})}{l_j!}
\Bigl(  \prod_{\substack{2\leq i\leq r+1\\i\neq j}}
    \binom{k_{1i}+l_i-1}{l_i}\Bigr)
\notag\\
&\qquad\times
\zeta_{r-1}((s_{pq}+\delta_{p<j}\delta_{q=j}(k_{1p}+l_p)+\delta_{p=j}\delta_{q>j}(k_{1q}+l_q))_{2\leq p<q\leq r+1},
\notag\\
&\qquad\qquad\qquad
(y_2-y_1,\ldots,y_{j-1}-y_1,y_j,\ldots,y_r);A_{r-1}),
\notag
\end{align}
where $p_j=(1\;2\cdots j+1)p$ and $q_j=(1\;2\cdots j+1)q$.
\end{theorem}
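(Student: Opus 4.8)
The plan is to exploit the fact that $S(\mathbf{s},\mathbf{y};\{2,\ldots,r\};A_r)$ has been evaluated in two independent ways, and that the two evaluations must agree. The first, \eqref{sahen-lin-comb}, expresses $S$ as a signed sum of rank-$r$ zeta-functions of type $A_r$; this is precisely the left-hand side of \eqref{Fq-rel-1}. The second, \eqref{uhen-lin-comb}, expresses the same $S$ through Theorem \ref{thm:main1} as a multiple of a sum of rank-$(r-1)$ zeta-functions of type $A_{r-1}$; this supplies the right-hand side. Thus the functional relation \eqref{Fq-rel-1} is obtained simply by equating \eqref{sahen-lin-comb} and \eqref{uhen-lin-comb} and rearranging. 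I would therefore present the proof as the verification that these two displayed quantities are indeed equal and that their difference rearranges into the asserted form.

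For the signed-sum side I would first record the ingredients already assembled before the statement: the coset representatives $W^I=\{\id,\sigma_1,\sigma_1\sigma_2,\ldots,\sigma_1\cdots\sigma_r\}$ (whose cardinality $r+1=|W|/|W_I|$ confirms exhaustiveness), the permuted parameter $w^{-1}\mathbf{s}=(s_{p_jq_j})$ and the shifted argument $w^{-1}\mathbf{y}$ for $w=\sigma_1\cdots\sigma_j$, and the set $\Delta_{w^{-1}}=\{e_1-e_{i+1}\mid 1\le i\le j\}$, which produces the sign $\prod_{i=1}^{j}(-1)^{k_{1,i+1}}$ through the factor $\prod_{\alpha\in\Delta_{w^{-1}}}(-1)^{-s_\alpha}$ appearing in \eqref{sahen}. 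Feeding these into \eqref{sahen} yields \eqref{sahen-lin-comb} directly.

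The substantive work lies on the Bernoulli side. Here I would insert the explicit form \eqref{P-explicit} of $P(\mathbf{k},\mathbf{y},(m_i);\{2,\ldots,r\};A_r)$ from Theorem \ref{Th-Fq-Ar} into the right-hand side of \eqref{eq:func_eq}, having first checked that hypothesis (\#) is automatically satisfied, since $A_r$ with $r\ge2$ is irreducible of type $A_r$ and hence has no component of type $A_1$. Because the Bernoulli factor $B_{l_j}(\{y_1\})$ and the binomial coefficients do not involve the summation variables $m_2,\ldots,m_r$, they may be pulled outside the sum over $P_{I++}$, i.e. over $m_2,\ldots,m_r\ge1$. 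What remains inside is $\prod_{i\ne j}|m_{ij}|^{-(k_{1i}+l_i)}\prod_{2\le p<q\le r+1}(m_p+\cdots+m_{q-1})^{-s_{pq}}$ together with the exponential $\exp(2\pi\sqrt{-1}(\cdots))$, and the key step is to recognize this multiple series as a single zeta-function of type $A_{r-1}$ in which the exponents $s_{pq}$ are shifted to $s_{pq}+\delta_{p<j}\delta_{q=j}(k_{1p}+l_p)+\delta_{p=j}\delta_{q>j}(k_{1q}+l_q)$ and the argument becomes $(y_2-y_1,\ldots,y_{j-1}-y_1,y_j,\ldots,y_r)$.

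I expect the main obstacle to be precisely this identification, because of the sign carried by $m_{ij}$. By definition $m_{ij}=m_i+\cdots+m_{j-1}>0$ for $i<j$ but $m_{ij}=-(m_j+\cdots+m_{i-1})<0$ for $i>j$, so passing from $1/|m_{ij}|^{k_{1i}+l_i}$ to the genuine $A_{r-1}$ denominator $m_j+\cdots+m_{i-1}$ generates a sign $(-1)^{\delta_{i>j}(k_{1i}+l_i)}$, which must be gathered with the factors $(-1)^{k_{1i}-1}$ coming from \eqref{P-explicit} and with the overall $(-1)^r$ and the power of $2\pi\sqrt{-1}$; here the identity $\sum_{i\ne j}(k_{1i}+l_i)=\sum_i k_{1i}-l_j$ collapses the $2\pi\sqrt{-1}$-exponent to a single term in $l_j$. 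Once this bookkeeping produces \eqref{uhen-lin-comb}, equating it with \eqref{sahen-lin-comb}, moving the $\zeta_r$-sum to the left and cancelling the common factor $(-1)^r(2\pi\sqrt{-1})^{\sum k_{1i}}$ into the coefficients on the right, yields \eqref{Fq-rel-1}.
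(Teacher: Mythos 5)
Your proposal is correct and follows essentially the same route as the paper: the paper likewise obtains \eqref{Fq-rel-1} by computing $S(\mathbf{s},\mathbf{y};\{2,\ldots,r\};A_r)$ once via \eqref{sahen} (using the listed coset representatives of $W^I$, their action on $\mathbf{s}$, $\mathbf{y}$, and the sets $\Delta_{w^{-1}}$) to get \eqref{sahen-lin-comb}, and once via Theorem \ref{thm:main1} with \eqref{P-explicit} substituted, handling the sign $(-1)^{\delta_{i>j}}$ from $|m_{ij}|$ and the exponent identity $\sum_{i\neq j}(k_{1i}+l_i)=\sum_i k_{1i}-l_j$ exactly as you describe, to get \eqref{uhen-lin-comb}, then equating the two and cancelling the common factor $(-1)^r(2\pi\sqrt{-1})^{\sum k_{1i}}$. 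Your additional remark that hypothesis (\#) holds automatically for irreducible $A_r$ with $r\geq 2$ is a correct point the paper leaves implicit.
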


The cases $r=2,3$ of this theorem will be written down as Examples \ref{ex-1}, \ref{ex-2}
in Section \ref{sec7}.


\subsection{$C_r$ Case}
We realize 
$$\Delta_+^{\vee}=\{e_i\pm e_j~|~1\leq i<j\leq r\}\cup\{e_j~|~1\leq j\leq r\}.$$ 
Then $\Psi^{\vee}=\{\alpha_1^{\vee}=e_1-e_2,\ldots,\alpha_{r-1}^{\vee}=e_{r-1}-e_r,\alpha_r^{\vee}=e_r\}$, and the fundamental weights are given by
$\lambda_i=e_1+\cdots+e_i$ for $1\leq i\leq r$.
Choose $I=\{2,\ldots,r\}$ and $I^c=\{1\}$ as in the following diagram.

\begin{equation}
\setlength{\unitlength}{1pt}\begin{picture}(0,0)
\put(149,2){\oval(238,28)}
\end{picture}
\VN{\alpha_1}\E\VN{\alpha_2}\E\V\EO\V\E\V\E\V\EB{\E}{\E}\VN{\!\!\!\!\!\!\alpha_{r}}
\end{equation}

Then 
\begin{equation}
  \Psi_I^{\vee}=\{\alpha_2^{\vee}=e_2-e_3,\ldots,\alpha_{r-1}^{\vee}=e_{r-1}-e_r,\alpha_r^{\vee}=e_r\}
\end{equation}
and $\Delta^{*\vee}=\{e_1\pm e_j~|~2\leq j\leq r\}\cup\{e_1\}$. 
Hence 
\begin{equation}
  \mathscr{V}_I^{\vee}=\{\{e_1-e_j\}\cup\Psi_I^{\vee}\}_{2\leq j\leq r}
\cup\{\{e_1+e_j\}\cup\Psi_I^{\vee}\}_{2\leq j\leq r}
\cup\{\{e_1\}\cup\Psi_I^{\vee}\}.
\end{equation}
For $\beta^\vee=e_1-e_j$ ($2\leq j\leq r$), we see that
\begin{equation}
  b_l=\langle\beta^\vee,\lambda_l\rangle=
  \begin{cases}
    1\qquad&(1\leq l<j),\\
    0\qquad&(j\leq l\leq r),
  \end{cases}
\end{equation}
and in particular $\mu_\beta^{\mathbf{V}}=\lambda_1=e_1$.
Therefore for $\gamma^{\vee}=e_1-e_i\in\Delta^{*\vee}$ ($i\neq j$)
and $\gamma^{\vee}=e_1+e_i\in\Delta^{*\vee}$,
\begin{equation}
  \begin{split}
    p^*_{\mathbf{V}_I^\perp}(\gamma^\vee)
    &=e_1\pm e_i-\langle e_1\pm e_i,\lambda_1\rangle(e_1-e_j)
    \\
    &=e_1\pm e_i-(e_1-e_j)
    \\
    &=
    \begin{cases}    
    e_j\pm e_i\in\Delta_I^\vee & (i\neq j),\\
    2e_j\in 2\Delta_I^{\vee} & (i=j),
    \end{cases}
  \end{split}
\end{equation}
and 
\begin{equation}
  \begin{split}
    p^*_{\mathbf{V}_I^\perp}(e_1)
    &=e_1-\langle e_1,\lambda_1\rangle(e_1-e_j)
    \\
    &=e_1-(e_1-e_j)
    \\
    &=e_j\in\Delta_I^\vee.
  \end{split}
\end{equation}
Next for $\beta^\vee=e_1+e_j$ ($2\leq j\leq r$), we see that
\begin{equation}
  b_l=\langle\beta^\vee,\lambda_l\rangle=
  \begin{cases}
    1\qquad&(1\leq l<j),\\
    2\qquad&(j\leq l\leq r),
  \end{cases}
\end{equation}
and in particular $\mu_\beta^{\mathbf{V}}=\lambda_1=e_1$.
Therefore for $\gamma^{\vee}=e_1+e_i\in\Delta^{*\vee}$ ($i\neq j$)
and $\gamma^{\vee}=e_1-e_i\in\Delta^{*\vee}$,
\begin{equation}
  \begin{split}
    p^*_{\mathbf{V}_I^\perp}(\gamma^\vee)
    &=e_1\pm e_i-\langle e_1\pm e_i,\lambda_1\rangle(e_1+e_j)
    \\
    &=e_1\pm e_i-(e_1+e_j)
    \\
    &=
    \begin{cases}
      -e_j\pm e_i\in\Delta_I^\vee\qquad&(i\neq j), \\
      -2e_j\in 2\Delta_I^\vee\qquad&(i=j),
    \end{cases}
  \end{split}
\end{equation}
and 
\begin{equation}
  \begin{split}
    p^*_{\mathbf{V}_I^\perp}(e_1)
    &=e_1-\langle e_1,\lambda_1\rangle(e_1+e_j)
    \\
    &=e_1-(e_1+e_j)
    \\
    &=-e_j\in\Delta_I^\vee.
  \end{split}
\end{equation}
Lastly for $\beta^\vee=e_1$, we see that
\begin{equation}
  b_l=\langle\beta^\vee,\lambda_l\rangle=
  1\qquad(1\leq l\leq r),
\end{equation}
and in particular $\mu_\beta^{\mathbf{V}}=\lambda_1=e_1$.
Therefore for $\gamma^{\vee}=e_1\pm e_i\in\Delta^{*\vee}$,
\begin{equation}
  \begin{split}
    p^*_{\mathbf{V}_I^\perp}(\gamma^\vee)
    &=e_1\pm e_i-\langle e_1\pm e_i,\lambda_1\rangle e_1
    \\
    &=e_1\pm e_i-e_1
    \\
    &=\pm e_i\in\Delta_I^\vee.
  \end{split}
\end{equation}

Now calculate $\langle p^*_{\mathbf{V}_I^\perp}(\gamma^\vee), \lambda\rangle$.
Consider the case when $\beta^{\vee}=e_1-e_j$ ($2\leq j\leq r$).
For $\gamma^{\vee}=e_1-e_i$ ($i\neq j$), we have
$$
\langle p^*_{\mathbf{V}_I^\perp}(\gamma^\vee), \lambda\rangle
=\langle e_j-e_i,m_2\lambda_2+\cdots+m_r\lambda_r\rangle=-m_{ij},
$$
where the second equality is obtained by using
$\lambda_i=e_1+\cdots+e_i$, or by using (the second member of) the facts
\begin{align}\label{koredemoii}
\begin{cases}
e_i=\sum_{i\leq k\leq r}\alpha_k^{\vee} \quad (1\leq i\leq r),\\
e_i-e_j=\sum_{i\leq k<j}\alpha_k^{\vee} \quad (1\leq i<j\leq r),\\
e_i+e_j=\sum_{i\leq k<j}\alpha_k^{\vee}+2\sum_{j\leq k\leq r}\alpha_k^{\vee} \quad 
(1\leq i<j\leq r).
\end{cases}
\end{align}
Using again $\lambda_i=e_1+\cdots+e_i$ or \eqref{koredemoii}, 
for $\gamma^{\vee}=e_1+e_i$ we have
$$
\langle p^*_{\mathbf{V}_I^\perp}(\gamma^\vee), \lambda\rangle
=\langle e_j+e_i,m_2\lambda_2+\cdots+m_r\lambda_r\rangle=m_{ijr},
$$
where
\begin{align}
m_{ijr}=
\begin{cases}
  m_i+\cdots+m_{j-1}+2(m_j+\cdots+m_r) & (2\leq i<j),\\
  2(m_j+\cdots+m_r) & (i=j),\\
  m_j+\cdots+m_{i-1}+2(m_i+\cdots+m_r) & (j<i\leq r).
\end{cases}
\end{align}
For $\gamma^{\vee}=e_1$, 
$$
\langle p^*_{\mathbf{V}_I^\perp}(\gamma^\vee), \lambda\rangle
=\langle e_j,m_2\lambda_2+\cdots+m_r\lambda_r\rangle=m_j+\cdots+m_r=m_{j,r+1}.
$$
Similarly we evaluate $\langle p^*_{\mathbf{V}_I^\perp}(\gamma^\vee), \lambda\rangle$
for other $\beta^{\vee}$.

We now insert those data into \eqref{Fq-general}.
Putting $t_{e_1\pm e_i}=t_{\pm i}$ for $2\leq i\leq r$ and $t_{2e_1}=t_1$,
we obtain the following form of the generating function:

\begin{theorem}
In the case $\Delta=\Delta(C_r)$ and $I=\{2,\ldots,r\}$ we have
\begin{align}\label{3-41}
    &F(t_1,(t_{\pm i})_{2\leq i\leq r},(y_j)_{1\leq j\leq r},(m_i)_{2\leq i\leq r};\{2,\ldots,r\};C_r)  
\\
&=
\sum_{j=2}^{r}
\prod_{\substack{2\leq i\leq r \\ i\neq j}}\frac{t_{-i}}{t_{-i}-t_{-j}+2\pi\sqrt{-1}m_{ij}}
\prod_{2\leq i\leq r}\frac{t_{+i}}{t_{+i}-t_{-j}-2\pi\sqrt{-1}m_{ijr}}
\notag\\
&\qquad\times\frac{t_1}{t_1-t_{-j}-2\pi\sqrt{-1}m_{j,r+1}}
\notag\\
&\qquad\times
\exp\Bigl(2\pi\sqrt{-1}
\Bigl(
    \sum_{i=2}^{j-1}m_i(y_i-y_1)+\sum_{i=j}^r m_iy_i
\Bigr)
\Bigr)
\frac{t_{-j}\exp(t_{-j}\{y_1\})}{e^{t_{-j}}-1}
\notag\\
&+
\sum_{j=2}^{r}
\prod_{2\leq i\leq j}\frac{t_{-i}}{t_{-i}-t_{+j}+2\pi\sqrt{-1}m_{ijr}}
\prod_{\substack{2\leq i\leq r \\ i\neq j}}\frac{t_{+i}}{t_{+i}-t_{+j}-2\pi\sqrt{-1}m_{ij}}
\notag\\
&\qquad\times\frac{t_1}{t_1-t_{+j}+2\pi\sqrt{-1}m_{j,r+1}}
\notag\\
&\qquad\times
\exp\Bigl(2\pi\sqrt{-1}
\Bigl(
    \sum_{i=2}^{j-1}m_i(y_i-y_1)+\sum_{i=j}^{r} m_i(y_i-2y_1) 
\Bigr)
\Bigr)
\frac{t_{+j}\exp(t_{+j}\{y_1\})}{e^{t_{+j}}-1}
\notag\\
&+
\prod_{2\leq i\leq r}\frac{t_{-i}}{t_{-i}-t_{1}+2\pi\sqrt{-1}m_{i,r+1}}
\prod_{2\leq i\leq r}\frac{t_{+i}}{t_{+i}-t_{1}-2\pi\sqrt{-1}m_{i,r+1}}
\notag\\
&\qquad\times
\exp\Bigl(2\pi\sqrt{-1}
\Bigl(
    \sum_{i=2}^{r}m_i(y_i-y_1)
\Bigr)
\Bigr)
\frac{t_{1}\exp(t_{1}\{y_1\})}{e^{t_{1}}-1}.
\notag
\end{align}
\end{theorem}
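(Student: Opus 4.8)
The plan is to specialize the master formula \eqref{Fq-general} for the generating function (valid in the present situation $\abs{I^c}=1$ with $k=1$) to the realization $\Delta_+^\vee(C_r)=\{e_i\pm e_j\mid 1\leq i<j\leq r\}\cup\{e_j\mid 1\leq j\leq r\}$, and to split the outer sum $\sum_{\beta\in\Delta^*}$ according to the three families of bases already isolated, namely $\beta^\vee=e_1-e_j$ and $\beta^\vee=e_1+e_j$ for $2\leq j\leq r$, together with $\beta^\vee=e_1$. Each family contributes one of the three blocks on the right-hand side of \eqref{3-41}, so the entire argument is the insertion, family by family, of the data on $b_l=\langle\beta^\vee,\lambda_l\rangle$, on $\mu_\beta^{\mathbf{V}}=e_1$, and on the projections $p^*_{\mathbf{V}_I^\perp}(\gamma^\vee)$ that were evaluated just before the statement.

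First I would record the simplifications common to all three families. Since $\langle\beta^\vee,\lambda_1\rangle=1$ and $\langle\gamma^\vee,\lambda_1\rangle=1$ for every $\gamma^\vee\in\Delta^{*\vee}$, the coefficient $\langle\gamma^\vee,\lambda_k\rangle/b_k$ of $t_\beta$ appearing in each denominator of \eqref{Fq-general} equals $1$, so every factor takes the shape $t_\gamma/(t_\gamma-t_\beta-2\pi\sqrt{-1}\langle p^*_{\mathbf{V}_I^\perp}(\gamma^\vee),\lambda\rangle)$. Because $b_k=b_1=1$ in all three cases, the quotient $Q^\vee/L(\mathbf{V}^\vee)$ is trivial: the inner sum over $q$ collapses to the single term $a_1=0$ (i.e. $q=0$), and the fractional part $\{(y_1+a_1)/b_1\}=\{y_1\}$, producing the factor $t_\beta\exp(t_\beta\{y_1\})/(e^{t_\beta}-1)$ in each summand.

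Next I would feed in the projection pairings $\langle p^*_{\mathbf{V}_I^\perp}(\gamma^\vee),\lambda\rangle$, which have already been computed from $\lambda_i=e_1+\cdots+e_i$ (equivalently via \eqref{koredemoii}): for $\beta^\vee=e_1-e_j$ they equal $-m_{ij}$, $m_{ijr}$, and $m_{j,r+1}$ as $\gamma^\vee$ runs through $e_1-e_i$, $e_1+e_i$, and $e_1$; for $\beta^\vee=e_1+e_j$ they equal $-m_{ijr}$, $m_{ij}$, and $-m_{j,r+1}$; and for $\beta^\vee=e_1$ they equal $\pm m_{i,r+1}$ according as $\gamma^\vee=e_1\pm e_i$. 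Combining these with the exponential factor $\exp(2\pi\sqrt{-1}\sum_{i\neq 1}m_i(y_i-b_iy_1))$ of \eqref{Fq-general}, and using $b_i=1$ for $i<j$ together with $b_i=2$ for $i\geq j$ in the $\beta^\vee=e_1+e_j$ family (while $b_i=1$ throughout in the other two), one reads off the three displayed blocks once the abbreviations $t_{e_1\pm e_i}=t_{\pm i}$ and $t_{2e_1}=t_1$ are made.

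There is no deep obstacle here: the statement is exactly the substitution into \eqref{Fq-general} of ingredients that were all prepared above. The only genuine care is bookkeeping, and the delicate points are tracking the signs generated by the projections, in particular distinguishing the generic outputs $\pm(e_i\pm e_j)$ from the special outputs $\pm 2e_j$ occurring when $i=j$, which are precisely the factors carrying $m_{jjr}=2(m_j+\cdots+m_r)$; and checking that in each family the product over $\gamma^\vee$ runs over exactly $\Delta^{*\vee}\setminus\{\beta^\vee\}$, so that each block has $\abs{\Delta^*}-1=2r-2$ denominators.
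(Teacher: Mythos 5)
Your route is exactly the paper's own: the theorem is obtained there by substituting into \eqref{Fq-general} (with $k=1$) the data $b_l=\langle\beta^\vee,\lambda_l\rangle$, $\mu_\beta^{\mathbf{V}}=\lambda_1=e_1$, and the pairings $\langle p^*_{\mathbf{V}_I^\perp}(\gamma^\vee),\lambda\rangle$, computed separately for the three families $\beta^\vee=e_1-e_j$, $e_1+e_j$, $e_1$. Your reduction of the $q$-sum (since $b_1=1$, the quotient $Q^\vee/L(\mathbf{V}^\vee)$ is trivial and the fractional part is $\{y_1\}$) and your table of pairings ($-m_{ij},m_{ijr},m_{j,r+1}$; resp.\ $-m_{ijr},m_{ij},-m_{j,r+1}$; resp.\ $\pm m_{i,r+1}$) are all correct. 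However, there is one concrete error: you assert that $b_i=1$ ``throughout'' in the two families other than $\beta^\vee=e_1+e_j$. That is true for $\beta^\vee=e_1$, but false for $\beta^\vee=e_1-e_j$, where the paper's computation gives $b_i=\langle e_1-e_j,\lambda_i\rangle=1$ for $i<j$ and $b_i=0$ for $j\leq i\leq r$. This matters precisely in the exponential factor $\exp\bigl(2\pi\sqrt{-1}\sum_{i=2}^r m_i(y_i-b_iy_1)\bigr)$ of \eqref{Fq-general}: the correct values yield $\exp\bigl(2\pi\sqrt{-1}\bigl(\sum_{i=2}^{j-1}m_i(y_i-y_1)+\sum_{i=j}^r m_iy_i\bigr)\bigr)$, the factor displayed in the first block, whereas your values would yield $\exp\bigl(2\pi\sqrt{-1}\sum_{i=2}^r m_i(y_i-y_1)\bigr)$, which is the third block's factor and is wrong for the first block. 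So the substitution as you describe it does not reproduce the first block; the fix is simply to use the $b_l$ table the paper records for $e_1-e_j$.

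One point in your favor: your check that in each family the product runs over all of $\Delta^{*\vee}\setminus\{\beta^\vee\}$, hence $2r-2$ denominators per block, is right, and it exposes a misprint in the displayed formula \eqref{3-41}: in the second block the product $\prod_{2\leq i\leq j}$ over the $t_{-i}$-factors should be $\prod_{2\leq i\leq r}$, since for $\beta^\vee=e_1+e_j$ every $\gamma^\vee=e_1-e_i$ with $2\leq i\leq r$ is distinct from $\beta^\vee$ and contributes a factor with $+2\pi\sqrt{-1}m_{ijr}$ in the denominator. This is confirmed by the $C_3$ specialization \eqref{C3-gene-F}, where for $j=2$ both the $t_{-2}$- and $t_{-3}$-factors appear in the $t_{+2}$-term.
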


From this theorem, it is possible to deduce the explicit forms of Bernoulli functions and
functional relations for the root system of type $C_r$.
The case $r=3$ will be discussed more explicitly in Section \ref{sec7}.

\section{Explicit functional relations ($\abs{I}=1$ case)}\label{sec4}
In this section we consider the case $I=\{i\}$ and $\mathbf{y}=\mathbf{0}$.
In this case the result can be stated in terms of the Lerch zeta-function
\begin{equation}
  \phi(s,u)=\sum_{n=1}^\infty \frac{e^{2\pi\sqrt{-1}u n}}{n^s}.
\end{equation}

\begin{theorem}
Let $s_\alpha=k_\alpha\in\mathbb{Z}_{\geq2}$ for $\alpha\in \Delta_+\setminus\{\alpha_i\}$ and
$s_{\alpha_i}\in\mathbb{C}$. Let
$|\mathbf{k}|=\sum_{\alpha\in\Delta_+\setminus\{\alpha_i\}}k_\alpha$.
Let $X_i=\{\nu=\{\langle q,\mu^{\mathbf{V}}_{\alpha_i}\rangle\}~|~
\mathbf{V}\in\mathscr{V}_I,q\in Q^\vee/L(\mathbf{V}^\vee)\}\subset\mathbb{Q}$.
Then we have
\begin{multline}
  \sum_{w\in W^I}
  \Bigl(\prod_{\alpha\in\Delta_{w^{-1}}}(-1)^{-k_{\alpha}}\Bigr)
  \zeta_r(w^{-1}\mathbf{s},0;\Delta)
  \\
  =(-1)^{\abs{\Delta_+}-1}
  \biggl(\prod_{\alpha\in \Delta_+\setminus\{\alpha_i\}}
  \frac{(2\pi\sqrt{-1})^{k_\alpha}}{k_\alpha!}\biggr)
  \sum_{\nu\in X_i}
  \sum_{j=0}^{|\mathbf{k}|}
  \frac{b_{\mathbf{k}\nu j}}{(2\pi\sqrt{-1})^j}
  \phi(s_{\alpha_i}+j,\nu),
\end{multline}
where
$b_{\mathbf{k}\nu j}\in\mathbb{Q}$ is given by
\begin{multline}
  \label{eq:coef_a}
  \sum_{\mathbf{k}\in \mathbb{N}_0^{\abs{\Delta^*}}}
  \sum_{\nu\in X_i}
  \sum_{j=0}^{|\mathbf{k}|}
  b_{\mathbf{k}\nu j}x^jy^\nu
  \prod_{\alpha\in \Delta^*}
  \frac{t_\alpha^{k_\alpha}}{k_\alpha!}\\
  =
  \sum_{\mathbf{V}\in\mathscr{V}_I}
  \prod_{\gamma\in \Delta^*\setminus\mathbf{V}_I}
  \frac{t_\gamma}
  {t_\gamma-\sum_{\beta\in\mathbf{V}_I}
    t_\beta\langle\gamma^\vee,\mu^{\mathbf{V}}_\beta\rangle
    -\langle\gamma^\vee,\mu^{\mathbf{V}}_{\alpha_i}\rangle/x}
  \\
  \times
  \frac{1}{\abs{Q^\vee/L(\mathbf{V}^\vee)}}
    \sum_{q\in Q^\vee/L(\mathbf{V}^\vee)}
    y^{\{\langle q,\mu^{\mathbf{V}}_{\alpha_i}\rangle\}} 
  \prod_{\beta\in\mathbf{V}_I}
  \frac{t_\beta\exp
    (t_\beta
    \{q\}_{\mathbf{V},\beta})}{e^{t_\beta}-1},
\end{multline}
where $x$ and $y$ are indeterminates.
\end{theorem}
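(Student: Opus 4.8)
The plan is to read off the left-hand side as the signed sum \eqref{sahen} and then feed it through Theorem \ref{thm:main1}. With $\mathbf{y}=\mathbf{0}$ the arguments $w^{-1}\mathbf{y}$ all vanish, and since for $w\in W^I$ one has $\Delta_{w^{-1}}\subset\Delta^*=\Delta_+\setminus\{\alpha_i\}$, the exponents $(-1)^{-k_\alpha}$ in the statement coincide with the $(-1)^{-s_\alpha}$ of \eqref{sahen}; thus the left-hand side is exactly $S(\mathbf{s},\mathbf{0};\{i\};\Delta)$. Because we assume $k_\alpha\geq 2$ for every $\alpha\in\Delta_+\setminus\{\alpha_i\}$, hypothesis (\#) holds and Theorem \ref{thm:main1} applies. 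First I would specialize \eqref{eq:func_eq} to $I=\{i\}$: here $\Delta_{I+}=\{\alpha_i\}$, $\abs{\Delta^*}=\abs{\Delta_+}-1$, $P_{I++}=\mathbb{N}\lambda_i$, and $\langle\alpha_i^\vee,n\lambda_i\rangle=n$, so \eqref{eq:func_eq} collapses to
\begin{equation*}
S(\mathbf{s},\mathbf{0};\{i\};\Delta)=(-1)^{\abs{\Delta_+}-1}\Bigl(\prod_{\alpha\in\Delta_+\setminus\{\alpha_i\}}\frac{(2\pi\sqrt{-1})^{k_\alpha}}{k_\alpha!}\Bigr)\sum_{n=1}^{\infty}\frac{1}{n^{s_{\alpha_i}}}P(\mathbf{k},\mathbf{0},n\lambda_i;\{i\};\Delta),
\end{equation*}
which already reproduces the prefactor of the asserted identity.

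The heart of the argument is to extract the dependence of $P(\mathbf{k},\mathbf{0},n\lambda_i;\{i\};\Delta)$ on $n$. By \eqref{eq:proj} with $\Psi_I=\{\alpha_i\}$ one has $p_{\mathbf{V}_I^\perp}(n\lambda_i)=n\,\mu^{\mathbf{V}}_{\alpha_i}$, so in \eqref{eq:exp_F} the only occurrences of $n$ are through $\langle\gamma^\vee,p_{\mathbf{V}_I^\perp}(n\lambda_i)\rangle=n\langle\gamma^\vee,\mu^{\mathbf{V}}_{\alpha_i}\rangle$ in the denominators and through $\exp(2\pi\sqrt{-1}\,n\langle q,\mu^{\mathbf{V}}_{\alpha_i}\rangle)$ in the $q$-sum. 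Comparing term by term with the generating function on the right-hand side of \eqref{eq:coef_a}, I would observe that $F(\mathbf{t}_I,\mathbf{0},n\lambda_i;\{i\};\Delta)$ is recovered from that generating function by the substitutions $1/x\mapsto 2\pi\sqrt{-1}\,n$ and $y^{\nu}\mapsto\exp(2\pi\sqrt{-1}\,n\nu)$, using that $\exp(2\pi\sqrt{-1}\,n\langle q,\mu^{\mathbf{V}}_{\alpha_i}\rangle)=\exp(2\pi\sqrt{-1}\,n\{\langle q,\mu^{\mathbf{V}}_{\alpha_i}\rangle\})$ since $n\in\mathbb{Z}$. Reading off the coefficient of $\prod_{\alpha\in\Delta^*}t_\alpha^{k_\alpha}/k_\alpha!$ then gives
\begin{equation*}
P(\mathbf{k},\mathbf{0},n\lambda_i;\{i\};\Delta)=\sum_{\nu\in X_i}\sum_{j=0}^{\abs{\mathbf{k}}}\frac{b_{\mathbf{k}\nu j}}{(2\pi\sqrt{-1})^j}\,\frac{\exp(2\pi\sqrt{-1}\,n\nu)}{n^{j}},
\end{equation*}
with $b_{\mathbf{k}\nu j}$ exactly as defined by \eqref{eq:coef_a}.

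It then remains to insert this into the displayed formula for $S$ and to interchange the (finite) sums over $\nu$ and $j$ with the sum over $n$, producing
\begin{equation*}
\sum_{n=1}^{\infty}\frac{1}{n^{s_{\alpha_i}}}P(\mathbf{k},\mathbf{0},n\lambda_i;\{i\};\Delta)=\sum_{\nu\in X_i}\sum_{j=0}^{\abs{\mathbf{k}}}\frac{b_{\mathbf{k}\nu j}}{(2\pi\sqrt{-1})^j}\sum_{n=1}^{\infty}\frac{\exp(2\pi\sqrt{-1}\,n\nu)}{n^{s_{\alpha_i}+j}}=\sum_{\nu\in X_i}\sum_{j=0}^{\abs{\mathbf{k}}}\frac{b_{\mathbf{k}\nu j}}{(2\pi\sqrt{-1})^j}\,\phi(s_{\alpha_i}+j,\nu),
\end{equation*}
which is precisely the claimed right-hand side after multiplying back the prefactor.

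The main obstacle, and the step I would treat most carefully, is the justification of the two substitutions above, together with the precise range of $j$. Expanding each factor $t_\gamma/(t_\gamma-\cdots-\langle\gamma^\vee,\mu^{\mathbf{V}}_{\alpha_i}\rangle/x)$ as a geometric series shows that each monomial in $t$ it contributes carries a power of $x$ equal to its $t$-degree; since the remaining factors $t_\beta\exp(t_\beta\{q\}_{\mathbf{V},\beta})/(e^{t_\beta}-1)$ carry no $x$, the $x$-exponent $j$ equals the $t$-degree of the $\gamma$-factors and is therefore bounded by the total degree $\abs{\mathbf{k}}$, giving $0\le j\le\abs{\mathbf{k}}$. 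Rationality $b_{\mathbf{k}\nu j}\in\mathbb{Q}$ follows because all pairings $\langle\gamma^\vee,\mu^{\mathbf{V}}_\bullet\rangle$ and all Bernoulli coefficients arising from $t_\beta/(e^{t_\beta}-1)$ are rational. The finiteness of $X_i$ and of the $j$-range then makes the interchange of summation legitimate, and the identity, first established for $\Re(s_{\alpha_i})$ large where both sides converge absolutely, extends to all $s_{\alpha_i}$ by the meromorphic continuation of the Lerch zeta-functions $\phi(s_{\alpha_i}+j,\nu)$ and of the root-system zeta-functions on the left.
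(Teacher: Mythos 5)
Your proposal is correct and follows essentially the same route as the paper's own proof: specialize Theorem \ref{thm:main1} to $I=\{i\}$, use $p_{\mathbf{V}_I^\perp}(n\lambda_i)=n\mu^{\mathbf{V}}_{\alpha_i}$ to isolate the $n$-dependence of $F$, identify the generating function with the right-hand side of \eqref{eq:coef_a} via the substitutions $x=(2\pi\sqrt{-1}n)^{-1}$, $y=\exp(2\pi\sqrt{-1}n)$ (including the same degree bound $j\leq|\mathbf{k}|$), and then sum over $n$ to produce the Lerch zeta-functions before invoking \eqref{sahen}. The extra details you supply (why $\Delta_{w^{-1}}\subset\Delta^*$ for $w\in W^I$, rationality of $b_{\mathbf{k}\nu j}$, and the justification of interchanging the sums) are consistent with, and slightly more explicit than, what the paper writes.
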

\begin{proof}
Since $I=\{i\}$, we have $\Delta_{I+}=\{\alpha_i\}$ and
$\Delta^*=\Delta_+\setminus\{\alpha_i\}$.
  Note that
for $\lambda=n\lambda_i\in P_{I++}$,
we have $p_{\mathbf{V}_I^\perp}(\lambda)=n\mu^{\mathbf{V}}_{\alpha_i}$
by \eqref{eq:proj},
and hence for $u\in V$,
\begin{equation}
\langle u,p_{\mathbf{V}_I^\perp}(\lambda)\rangle
=n\langle u,\mu^{\mathbf{V}}_{\alpha_i}\rangle.
\end{equation}
Therefore
\begin{multline}\label{F-Taylor}
  F(\mathbf{t},0,\lambda;I;\Delta)
  =
    \sum_{\mathbf{V}\in\mathscr{V}_I}
    \prod_{\gamma\in \Delta^*\setminus\mathbf{V}_I}
    \frac{t_\gamma}
    {t_\gamma-\sum_{\beta\in\mathbf{V}_I}
      t_\beta\langle\gamma^\vee,\mu^{\mathbf{V}}_\beta\rangle
      -2\pi\sqrt{-1}n\langle\gamma^\vee,\mu^{\mathbf{V}}_{\alpha_i}\rangle}
    \\
    \times
    \frac{1}{\abs{Q^\vee/L(\mathbf{V}^\vee)}}
    \sum_{q\in Q^\vee/L(\mathbf{V}^\vee)}
    \exp(2\pi\sqrt{-1}n\{\langle q,\mu^{\mathbf{V}}_{\alpha_i}\rangle\})
    \prod_{\beta\in\mathbf{V}_I}
    \frac{t_\beta\exp
      (t_\beta
      \{q\}_{\mathbf{V},\beta})}{e^{t_\beta}-1}.
\end{multline}

Consider the Taylor expansion of the right-hand side with respect to
$\mathbf{t}$.
Putting $x=(2\pi\sqrt{-1}n)^{-1}$ and $y=\exp(2\pi\sqrt{-1}n)$, 
the right-hand side of \eqref{F-Taylor} is equal to the right-hand side of
\eqref{eq:coef_a}.
We write the Taylor expansion as the left-hand side of \eqref{eq:coef_a} with rational coefficients $b_{\mathbf{k}\nu j}$;
here we remark that in \eqref{eq:coef_a}, 
the highest degree of $x$ is at most $|\mathbf{k}|$, because when
$\langle\gamma^\vee,\mu^{\mathbf{V}}_{\alpha_i}\rangle\neq 0$, 
\begin{align}
&  \frac{t_\gamma}
  {t_\gamma-\sum_{\beta\in\mathbf{V}_I}
    t_\beta\langle\gamma^\vee,\mu^{\mathbf{V}}_\beta\rangle
    -\langle\gamma^\vee,\mu^{\mathbf{V}}_{\alpha_i}\rangle/x}\\
&  \quad =
  \frac{x t_\gamma}
  {x t_\gamma -\sum_{\beta\in\mathbf{V}_I}
     x t_\beta \langle\gamma^\vee,\mu^{\mathbf{V}}_\beta\rangle
    -\langle\gamma^\vee,\mu^{\mathbf{V}}_{\alpha_i}\rangle}\notag
\end{align}
and hence $x$ in the expansion appears necessarily together with $t_\alpha$.

From \eqref{eq:coef_a} we have
\begin{equation}
  P(\mathbf{k},0,\lambda;I;\Delta)=
  \sum_{\nu\in X_i}\sum_{j=0}^{|\mathbf{k}|}
  \frac{b_{\mathbf{k}\nu j}}{(2\pi\sqrt{-1}n)^j}
  \exp(2\pi\sqrt{-1}n\nu).
\end{equation}
Therefore from Theorem \ref{thm:main1} we obtain
\begin{equation}
  \begin{split}
   & S(\mathbf{s},0;I;\Delta)
    =(-1)^{\abs{\Delta_+}-1}
    \biggl(\prod_{\alpha\in \Delta_+\setminus\{\alpha_i\}}
    \frac{(2\pi\sqrt{-1})^{k_\alpha}}{k_\alpha!}\biggr)
    \sum_{n=1}^\infty
    \frac{1}{n^{s_{\alpha_i}}}
    P(\mathbf{k},0,\lambda;I;\Delta)
    \\
    &\quad=(-1)^{\abs{\Delta_+}-1}
    \biggl(\prod_{\alpha\in \Delta_+\setminus\{\alpha_i\}}
    \frac{(2\pi\sqrt{-1})^{k_\alpha}}{k_\alpha!}\biggr)
    \sum_{\nu\in X_i}
    \sum_{j=0}^{|\mathbf{k}|}
    \frac{b_{\mathbf{k}\nu j}}{(2\pi\sqrt{-1})^j}
    \phi(s_{\alpha_i}+j,\nu).
  \end{split}
\end{equation}
Combining this and \eqref{sahen} we obtain the assertion of the theorem.
\end{proof}

\section{Poincar\'e polynomials}\label{sec5}

Now we turn our attention to the left-hand side of \eqref{phil}.
We begin with the relation
  \begin{equation}\label{W-decomp}
    W_IW^I=W,
  \end{equation}
which follows by the definitions of $W_I, W^I$ or \cite[Lemma 1]{KMTWitten3}.
Therefore $w\in W$ is uniquely written as $w=xy$ with $x\in W_I$, $y\in W^I$.
We note
\begin{align}\label{w-x-y-decomp}
\Delta_{w^{-1}}=x\Delta_{y^{-1}}\sqcup \Delta_{x^{-1}}.
\end{align}
This is well-known, but we supply a proof.
Let $x=\sigma_{\nu_1}\cdots \sigma_{\nu_j}$, $y=\sigma_{\nu_{j+1}}\cdots \sigma_{\nu_{k}}$ be reduced expressions 
by simple
reflections.    Then $w=\sigma_{\nu_{1}}\cdots \sigma_{\nu_{k}}$ is also a reduced expression because of the
length identity $l(w)=l(x)+l(y)$ (\cite[Section 1.10, Proposition (c)]{Hum}).
Let $\beta_{ik}=\sigma_{\nu_{k}} \sigma_{\nu_{k-1}}\cdots \sigma_{\nu_{i+1}}(\alpha_{\nu_i})$ ($1\leq i\leq k-1$)
and $\beta_{kk}=\alpha_k$.   Then $\Delta_w=\{\beta_{1k},\ldots,\beta_{kk}\}$ 
(\cite[Section 1.7, p.14]{Hum}).    Similarly we have
$\Delta_x=\{\beta_{1j},\ldots,\beta_{jj}\}$ and
$\Delta_y=\{\beta_{j+1,k},\ldots,\beta_{kk}\}$.    Therefore
$\Delta_{w}=y^{-1}\Delta_{x}\sqcup \Delta_{y}$.
Applying this argument to $w^{-1}=y^{-1}x^{-1}$, we obtain \eqref{w-x-y-decomp}.

Let $\mathbf{u}=(u_\alpha)_{\alpha\in\Delta_+}$ be a vector of indeterminates.
For $X\subset\Aut(\Delta)$, we define a generalization of Poincar\'e polynomials due to Macdonald
\cite{Mac} as
\begin{equation}
  X(\mathbf{u})=\sum_{w\in X}\prod_{\alpha\in\Delta_{w^{-1}}}u_\alpha.
\end{equation}
When $X=W^I$ and $\mathbf{u}=((-1)^{-s_{\alpha}})_{\alpha\in\Delta_+}$, the
corresponding Poincar{\'e} polynomial is
\begin{align}\label{sum-of-coeff}
W^I(\mathbf{u})=\sum_{w\in W^I}\prod_{\alpha\in\Delta_{w^{-1}}}
(-1)^{-s_{\alpha}},
\end{align}
which is the sum of coefficients of \eqref{sahen}.
If this is not zero, then the left-hand side of \eqref{phil} does not vanish
identically.

\begin{lemma}[{cf.~\cite[Section 1.11, p.\,21]{Hum}}]
\label{lm:WIWI}
Assume $u_\alpha=u_\beta$ if $\lVert\alpha\rVert=\lVert\beta\rVert$. Then
  \begin{equation}\label{WIWI=W}
  W_I(\mathbf{u})W^I(\mathbf{u})=W(\mathbf{u}).
\end{equation}
\end{lemma}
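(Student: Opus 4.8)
The plan is to deduce the multiplicative identity \eqref{WIWI=W} from the decomposition \eqref{w-x-y-decomp} established just above, which expresses $\Delta_{w^{-1}}=x\Delta_{y^{-1}}\sqcup\Delta_{x^{-1}}$ for the unique factorization $w=xy$ with $x\in W_I$, $y\in W^I$ guaranteed by \eqref{W-decomp}. First I would write out the right-hand side as a sum over $w\in W$ and reindex it via this factorization: since the map $(x,y)\mapsto xy$ is a bijection $W_I\times W^I\to W$, I get
\begin{equation}
W(\mathbf{u})=\sum_{w\in W}\prod_{\alpha\in\Delta_{w^{-1}}}u_\alpha
=\sum_{x\in W_I}\sum_{y\in W^I}\prod_{\alpha\in x\Delta_{y^{-1}}}u_\alpha\prod_{\alpha\in\Delta_{x^{-1}}}u_\alpha,
\end{equation}
where the disjointness in \eqref{w-x-y-decomp} is what lets me split the single product over $\Delta_{w^{-1}}$ into the two separate products.

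The crux is then to show $\prod_{\alpha\in x\Delta_{y^{-1}}}u_\alpha=\prod_{\alpha\in\Delta_{y^{-1}}}u_\alpha$, so that the inner sum over $y$ decouples from $x$ and factors out as $W^I(\mathbf{u})$, leaving $W_I(\mathbf{u})W^I(\mathbf{u})$. This is exactly where the hypothesis $u_\alpha=u_\beta$ whenever $\lVert\alpha\rVert=\lVert\beta\rVert$ enters: the element $x\in W_I\subset W$ lies in the Weyl group, every element of which is an isometry preserving the inner product $\langle\cdot,\cdot\rangle$, hence preserves root lengths. So for each $\alpha\in\Delta_{y^{-1}}$ the root $x\alpha$ has $\lVert x\alpha\rVert=\lVert\alpha\rVert$, and therefore $u_{x\alpha}=u_\alpha$ by hypothesis; applying $x$ is a length-preserving bijection of $\Delta_{y^{-1}}$ onto $x\Delta_{y^{-1}}$, so the two products over the matched weights coincide term by term.

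I expect the main obstacle to be a minor bookkeeping subtlety rather than a deep difficulty: one must be slightly careful that $\Delta_{y^{-1}}\subset\Delta_+$ and that $x\Delta_{y^{-1}}$ may contain negative roots, so the convention $u_\alpha=u_{-\alpha}$ (analogous to the convention $s_\alpha=s_{-\alpha}$ used for the $W$-action in \eqref{2-1}) should be invoked, or equivalently one notes $\lVert x\alpha\rVert=\lVert\pm x\alpha\rVert$ so the length hypothesis still applies regardless of sign. With that convention in force the length-preservation argument is airtight. Putting the pieces together yields
\begin{equation}
W(\mathbf{u})=\Bigl(\sum_{x\in W_I}\prod_{\alpha\in\Delta_{x^{-1}}}u_\alpha\Bigr)\Bigl(\sum_{y\in W^I}\prod_{\alpha\in\Delta_{y^{-1}}}u_\alpha\Bigr)=W_I(\mathbf{u})W^I(\mathbf{u}),
\end{equation}
which is the claimed factorization \eqref{WIWI=W}.
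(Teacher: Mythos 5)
Your proof is correct and follows essentially the same route as the paper: reindex the sum over $W$ via the bijection $W_I\times W^I\to W$ from \eqref{W-decomp}, split the product using \eqref{w-x-y-decomp}, and use the fact that $x\in W_I$ preserves root lengths together with the hypothesis to replace $\prod_{\alpha\in x\Delta_{y^{-1}}}u_\alpha$ by $\prod_{\alpha\in\Delta_{y^{-1}}}u_\alpha$, after which the double sum factors. Your worry about negative roots is in fact moot, since $x\Delta_{y^{-1}}\subset\Delta_{w^{-1}}\subset\Delta_+$ by \eqref{w-x-y-decomp} itself, though the length-based fallback you give would also suffice.
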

\begin{proof}
The assumption for $u_\alpha$ implies $u_\alpha=u_{x^{-1}\alpha}$.
Therefore, using \eqref{W-decomp} and \eqref{w-x-y-decomp} we have
  \begin{equation}
    \begin{split}
      &W(\mathbf{u})
      =\sum_{w\in W}\prod_{\alpha\in\Delta_{w^{-1}}}u_\alpha
      =\sum_{x\in W_I}\sum_{y\in W^I}
      \Bigl(\prod_{\alpha\in x\Delta_{y^{-1}}}u_\alpha\Bigr)
      \Bigl(\prod_{\alpha\in \Delta_{x^{-1}}}u_\alpha\Bigr)\\
      &=\sum_{x\in W_I}\sum_{y\in W^I}
      \Bigl(\prod_{\alpha\in\Delta_{y^{-1}}}u_\alpha\Bigr)
      \Bigl(\prod_{\alpha\in\Delta_{x^{-1}}}u_\alpha\Bigr)
      =W_I(\mathbf{u})W^I(\mathbf{u}).
    \end{split}
  \end{equation}
\end{proof}

Assume $u_\alpha=u$ for all $\alpha\in\Delta_+$, that is, $\mathbf{u}=(u,\ldots,u)$,
which we write $(u)$ for brevity.  
By Lemma \ref{lm:WIWI}, we have
\begin{equation}\label{W-decomp2}
W^I((u))=\frac{W((u))}{W_I((u))}.
\end{equation}
In this case the numerator and the denominator of the right-hand side are classical
Poincar{\'e} polynomials, which have the following product expression (due to
Chevalley):
\begin{align}
\label{eq:Wu}
  W((u))&=\prod_{i=1}^r\frac{u^{d_i}-1}{u-1}=\prod_{i=1}^r(1+\cdots+u^{e_i}),\\
\label{eq:WIu}
  W_I((u))&=\prod_{i\in I}\frac{u^{d'_i}-1}{u-1}=\prod_{i\in I}(1+\cdots+u^{e'_i}),
\end{align}
where $d_i$ and $d'_i$ (resp.~$e_i$ and $e'_i$) are degrees (resp.~exponents) of the Weyl groups $W$ and $W_I$.
In fact, the first equalities are \cite[Section 3.15, Theorem]{Hum}, and then the
second equalities are immediate in view of \cite[Section 3.19, Theorem]{Hum}.
The following is the table of degrees (\cite[Section 3.7]{Hum}).
Here we note that $W(B_r)=W(C_r)$ and hence the their degrees agree.
\bigskip

\begin{center}
\begin{tabular}{c|c}
  Type & $\{d_1,\ldots,d_r\}=\{e_1+1,\ldots,e_r+1\}$\\
\hline
  $A_r$&$2,3,\ldots,r+1$\\
  $B_r$&$2,4,\ldots,2r$\\
  $C_r$&$2,4,\ldots,2r$\\
  $D_r$&$2,4,\ldots,2r-2,r$\\
  $E_6$&$2,5,6,8,9,12$\\
  $E_7$&$2,6,8,10,12,14,18$\\
  $E_8$&$2,8,12,14,18,20,24,30$\\
  $F_4$&$2,6,8,12$\\
  $G_2$&$2,6$\\
  \hline
\end{tabular}
\end{center}
\bigskip

Assume that $\Delta$ is a non-simply laced root system
and
$\Delta=\Delta_1\cup\Delta_2$, where each $\Delta_i$ consists of all roots of the same length.
Then each $\Delta_i$ is a root system.
Let $\Delta_{1+}=\Delta_+\cap \Delta_1$
and $\Delta_{1-}=\Delta_-\cap \Delta_1$.
Let $J$ be the set of indices determined by 
$\Psi_2=\Psi\cap\Delta_2=\{\alpha_i\}_{i\in J}$. 
\begin{prop}\label{1-2}
$W(\Delta)((u),(1))=\abs{W_J}W(\Delta_1)((u))$,
where the left-hand side means that $u_{\alpha}=u$ for $\alpha\in\Delta_1$ and
$u_{\alpha}=1$ for $\alpha\in\Delta_2$. 
\end{prop}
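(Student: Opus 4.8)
The plan is to read the left-hand side as a one-variable length generating function and to match it, coset by coset, against the intrinsic Poincar\'e polynomial of $W(\Delta_1)$.

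First I would record what the two sides mean. With $u_\alpha=u$ for $\alpha\in\Delta_1$ and $u_\alpha=1$ for $\alpha\in\Delta_2$, the factors indexed by roots of $\Delta_2$ are $1$, so
\[
W(\Delta)((u),(1))=\sum_{w\in W}u^{\abs{\Delta_{w^{-1}}\cap\Delta_1}}.
\]
Because $W$ preserves lengths it preserves $\Delta_1$, and $\abs{\Delta_{w^{-1}}\cap\Delta_1}=\abs{\Delta_{1+}\cap w\Delta_{1-}}=:\ell_1(w)$ is a ``$\Delta_1$-length'' defined for every $w\in W$; for $w\in W(\Delta_1)$ it is the ordinary Coxeter length, so $W(\Delta_1)((u))=\sum_{v\in W(\Delta_1)}u^{\ell_1(v)}$. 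Thus the claim reduces to $\sum_{w\in W}u^{\ell_1(w)}=\abs{W_J}\sum_{v\in W(\Delta_1)}u^{\ell_1(v)}$.

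Second, I would peel off the factor $\abs{W_J}$ using $W=W_JW^J$ from \eqref{W-decomp} together with \eqref{w-x-y-decomp}. The one fact needed here is the inclusion $\Delta_J=\Delta\cap\mathrm{span}(\Psi_2)\subset\Delta_2$: the sub-Dynkin diagram on $\Psi_2$ has only single bonds, since a multiple bond would join roots of different lengths whereas all of $\Psi_2$ lies in $\Delta_2$; hence $\Delta_J$ is simply laced and all its roots have the common length of $\Delta_2$. Consequently, for $x\in W_J$ the inversion set $\Delta_{x^{-1}}\subset\Delta_{J+}\subset\Delta_2$ is disjoint from $\Delta_1$. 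Writing $w=xy$ with $x\in W_J$, $y\in W^J$ and using $\Delta_{w^{-1}}=x\Delta_{y^{-1}}\sqcup\Delta_{x^{-1}}$ together with the $W$-invariance of $\Delta_1$ gives $\ell_1(w)=\ell_1(y)$; summing over $x$ yields $W(\Delta)((u),(1))=\abs{W_J}\,W^J((u),(1))$. (Equivalently, one applies Lemma \ref{lm:WIWI} and notes $W_J((u),(1))=\abs{W_J}$ by the same inclusion.)

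Third, and this is the heart of the matter, I would prove $W^J((u),(1))=W(\Delta_1)((u))$ by exhibiting $W(\Delta_1)$ and $W^J$ as transversals of the same coset space $W_J\backslash W$ on which $\ell_1$ agrees. Here $W(\Delta_1)\trianglelefteq W$ since $w\sigma_\alpha w^{-1}=\sigma_{w\alpha}$ with $w\alpha\in\Delta_1$, and $W=W(\Delta_1)W_J$ because each simple reflection lies in $W(\Delta_1)$ when $\alpha_i\in\Delta_1$ and in $W_J$ when $i\in J$. The key remaining point, which I expect to be the main obstacle, is the trivial intersection $W(\Delta_1)\cap W_J=\{1\}$: for $g\in W_J$ one has $\ell_1(g)=\abs{\Delta_{g^{-1}}\cap\Delta_1}=0$ by the inclusion $\Delta_{g^{-1}}\subset\Delta_2$ established above, while an element of $W(\Delta_1)$ of zero $\Delta_1$-length is the identity. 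This gives $W=W(\Delta_1)\rtimes W_J$, so each right coset of $W_J$ contains exactly one element of $W(\Delta_1)$ and exactly one of $W^J$; for a coset with representatives $v\in W(\Delta_1)$ and $y\in W^J$ one has $v=xy$ with $x\in W_J$, whence $\ell_1(v)=\ell_1(y)$ by the computation of the previous step. Summing over cosets identifies the two generating functions, and combining the three steps proves the proposition. Since the only genuinely new input beyond the cited structural results is the inclusion $\Delta_J\subset\Delta_2$ and the length bookkeeping it drives, that is where I would concentrate the care.
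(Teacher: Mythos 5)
Your proof is correct, and its skeleton matches the paper's: both sides are reduced to counting $\Delta_1$-inversions $\ell_1(w)=\abs{\Delta_{w^{-1}}\cap\Delta_1}$, both arguments rest on the trivial intersection $W(\Delta_1)\cap W_J=\{\id\}$, and both finish by matching $W(\Delta_1)$ with $W^J$ as transversals of $W_J\backslash W$ on which $\ell_1$ agrees, via \eqref{W-decomp} and \eqref{w-x-y-decomp}. The genuine difference is how the matching is established. The paper defines $f:W(\Delta_1)\to W^J$ by taking the $W^J$-component of the coset decomposition and proves surjectivity by hand: given a reduced word for $x\in W^J$, it multiplies by a suitable product of the $J$-letters and uses $\sigma_\alpha\sigma_\beta=\sigma_\beta\sigma_{\sigma_\beta\alpha}$ to rewrite $xy$ as a product of reflections in roots of $\Delta_1$, so that $xy\in W(\Delta_1)$. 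You bypass this reduced-word combinatorics entirely with a structural argument: $W(\Delta_1)\trianglelefteq W$ (since $W$ preserves lengths), $W=W(\Delta_1)W_J$ (every simple reflection lies in one of the two factors), and the trivial intersection together give $W=W(\Delta_1)\rtimes W_J$, whence $W(\Delta_1)$ is automatically a transversal of $W_J\backslash W$, just as $W^J$ is. This is cleaner and more conceptual; the cost is two standard imports the paper does not need, namely the inclusion $\Delta_J\subset\Delta_2$ (your simply-laced-subdiagram argument is fine, though one gets it even faster from the fact that every root of $\Delta_J$ is $W_J$-conjugate to an element of $\Psi_2$ and $W_J$ preserves lengths) and the parabolic fact $\Delta_{x^{-1}}\subset\Delta_{J+}$ for $x\in W_J$. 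Note also that the paper's proof of the trivial intersection is more economical than yours: for $i\in J$ one has $\sigma_i\Delta_{1-}=\Delta_{1-}$ simply because $\pm\alpha_i\notin\Delta_1$, so every $w\in W_J$ fixes $\Delta_{1-}$ setwise and hence has $\ell_1(w)=0$, with no appeal to $\Delta_J\subset\Delta_2$ at all.
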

\begin{proof}
We show $W(\Delta_1)\cap W_J=\{\id\}$.
Let $w\in W(\Delta_1)\cap W_J$.
Then $w=\sigma_{i_1}\cdots \sigma_{i_l}$ with $i_k\in J$.
Since $\sigma_{i_k}\Delta_+=(\Delta_+\setminus\{\alpha_{i_k}\})\cup\{-\alpha_{i_k}\}$,
we have
$\sigma_{i_k}\Delta_{-}=(\Delta_-\setminus\{-\alpha_{i_k}\})\cup\{\alpha_{i_k}\}
=\Delta_{-}$ and hence $\sigma_{i_k}\Delta_{1-}=\Delta_{1-}$ (because 
$\pm\alpha_{i_k}\notin\Delta_1$).    This implies that
 $\Delta_{1+}\cap w^{-1}\Delta_{1-}=\emptyset$.
Therefore the length of $w$ in $W(\Delta_1)$ is $0$ and
hence $w=\id$.

We define $f:W(\Delta_1)\to W^J$ by $f(w)=w^J$ using the decomposition $w=w^Jw_J$
 for $w\in W(\Delta_1)$.
We show that $f$ is bijective.
For
$w,\Tilde{w}\in W(\Delta_1)$,
assume $f(w)=f(\Tilde{w})$. Then $ww_J^{-1}=\Tilde{w}\Tilde{w}_J^{-1}$ and
$w^{-1}\Tilde{w}=w_J^{-1}\Tilde{w}_J\in W(\Delta_1)\cap W_J=\{\id\}$ by the previous paragraph,
which implies $w=\Tilde{w}$ and the injectivity of $f$.
Fix $x\in W^J$.
Let $x=\sigma_{i_1}\cdots \sigma_{i_m}$ be a reduced expression.
We decompose
$(i_1,\ldots,i_m)$ into two subsequences
as
$(j_1,\ldots,j_q)$ and $(k_1,\ldots,k_{m-q})$ 
such that
 $j_l\in J$ and $k_l\not\in J$.
Let $y=\sigma_{j_q}\cdots \sigma_{j_1}\in W_J$ and consider $w=xy$. Then 
by use of $\sigma_\alpha \sigma_\beta=\sigma_\beta \sigma_{\sigma_\beta \alpha}$, 
we carry each $\sigma_{j_l}$ forward in $y$ until it cancels the same element at the original position in $x$.
Hence we see that $w$ can be written as $\sigma_{\beta_1}\cdots \sigma_{\beta_{m-q}}$
with $\beta_p=\sigma_{j_1}\cdots \sigma_{j_h}\alpha_{k_p}\in \Delta_1$ for some $h$.
Thus we have $w=xy\in W(\Delta_1)$.
By the uniqueness of the decomposition, we see that $f$ is surjective and hence bijective.

In the first paragraph of the proof we have seen that
$w_J\Delta_{1-}=\Delta_{1-}$ for $w_J\in W_J$.
Noting this fact and the bijectivity of $f$, we obtain
\begin{equation}
  \label{eq:w1}
  \begin{split}
    W(\Delta_1)((u))
    &=\sum_{w\in W(\Delta_1)}\prod_{\alpha\in(\Delta_1)_{w^{-1}}}u\\
    &=\sum_{w\in W(\Delta_1)}\prod_{\alpha\in\Delta_{1+}\cap w^J w_J\Delta_{1-}}u\\
    &=\sum_{w\in W(\Delta_1)}\prod_{\alpha\in\Delta_{1+}\cap w^J\Delta_{1-}}u\\
    &=\sum_{x\in W^J}\prod_{\alpha\in\Delta_{1+}\cap x\Delta_{1-}}u.
  \end{split}
\end{equation}
Furthermore
we have
\begin{equation}
  \label{eq:w2}
  \begin{split}
    W(\Delta)((u),(1))
    &=\sum_{w\in W(\Delta)}\Bigl(\prod_{\alpha\in\Delta_{w^{-1}}\cap\Delta_1}u\Bigr)
\Bigl(\prod_{\alpha\in\Delta_{w^{-1}}\cap\Delta_2}1\Bigr)
\\
    &=\sum_{w\in W(\Delta)}\prod_{\alpha\in\Delta_{w^{-1}}\cap\Delta_1}u\\
    &=\sum_{y\in W_J}\sum_{x\in W^J}\prod_{\alpha\in\Delta_{1+}\cap xy\Delta_{1-}}u\\
    &=\sum_{y\in W_J}\sum_{x\in W^J}\prod_{\alpha\in\Delta_{1+}\cap x\Delta_{1-}}u.
  \end{split}
\end{equation}
Combining \eqref{eq:w1} and \eqref{eq:w2}, we obtain the result.
\end{proof}

We denote $W(X_r)=W(\Delta)$ for a root system of type $X_r$.
For a root system of type $X_r=B_r,C_r,F_4,G_2$, let $\Delta_1=\Delta_L(X_r)$ be 
the set of all long roots,
$\Delta_2=\Delta_S(X_r)$ that of all short roots.
\begin{corollary}
  \label{cor:twolen}
  \begin{align*}
    W(B_r)((u),(1))&=W(C_r)((1),(u))=\abs{W(A_1)}W(D_r)((u))=2W(D_r)((u)), \\
    W(C_r)((u),(1))&=W(B_r)((1),(u))=\abs{W(A_{r-1})}W(A_1^r)((u))=r!(W(A_1)((u)))^r, \\
    W(F_4)((u),(1))&=W(F_4)((1),(u))=\abs{W(A_2)}W(D_4)((u))=6W(D_4)((u)), \\
    W(G_2)((u),(1))&=W(G_2)((1),(u))=\abs{W(A_1)}W(A_2)((u))=2W(A_2)((u)).  
  \end{align*}
\end{corollary}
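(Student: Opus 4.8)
The plan is to derive the three-term equality in each of the four lines at once, by applying Proposition~\ref{1-2} separately to the two root systems (or the two weightings) occurring in that line. The point is that Proposition~\ref{1-2} is stated for an arbitrary decomposition of $\Delta$ into its two length classes $\Delta_1,\Delta_2$ with $\Psi_2=\Psi\cap\Delta_2$, and so it may be invoked with either length class playing the role of $\Delta_1$: whichever class carries the weight $u$ I take to be $\Delta_1$, and the proposition then collapses the generalized Poincar\'e polynomial to $\abs{W_J}\,W(\Delta_1)((u))$, where $W_J$ is the parabolic subgroup generated by the simple roots lying in the complementary length class. The corollary will follow once I verify that, in each line, the two applications yield the same expression.

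The first task is the root-system bookkeeping. In $B_r$ the long roots $\{\pm e_i\pm e_j\}$ form a subsystem of type $D_r$ and the short roots $\{\pm e_i\}$ form $A_1^r$; dually, in $C_r$ the long roots $\{\pm2e_i\}$ form $A_1^r$ and the short roots $\{\pm e_i\pm e_j\}$ form $D_r$. For $F_4$ both the long and the short roots form copies of $D_4$, and for $G_2$ both form copies of $A_2$. I then record the relevant parabolic subgroups: in $B_r$ the single short simple root $\alpha_r=e_r$ generates $W_J=W(A_1)$ of order $2$, whereas the long simple roots $\alpha_1,\dots,\alpha_{r-1}$ generate $W_J=W(A_{r-1})$ of order $r!$, and symmetrically in $C_r$; in $F_4$ the short simple pair $\{\alpha_3,\alpha_4\}$ and the long simple pair $\{\alpha_1,\alpha_2\}$ each generate $W(A_2)$ of order $6$; and in $G_2$ each single simple root generates $W(A_1)$ of order $2$.

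With this data the four lines are immediate. For the first line I apply Proposition~\ref{1-2} to $B_r$ with $\Delta_1$ the long $D_r$, giving $W(B_r)((u),(1))=2\,W(D_r)((u))$, and to $C_r$ with $\Delta_1$ the short $D_r$, giving $W(C_r)((1),(u))=2\,W(D_r)((u))$; the two coincide. The second line is the same computation with the length classes interchanged: now $\Delta_1=A_1^r$ and $\abs{W_J}=\abs{W(A_{r-1})}=r!$, and I use the factorization $W(A_1^r)((u))=(W(A_1)((u)))^r=(1+u)^r$, which holds because $\Delta_{w^{-1}}$ splits over the orthogonal $A_1$-components and $W(A_1)((u))=1+u$. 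The third and fourth lines are the self-dual cases $F_4$ and $G_2$: here the two members $W(F_4)((u),(1))$ and $W(F_4)((1),(u))$ (resp.\ the two for $G_2$) concern the same root system with opposite weightings, and taking $\Delta_1$ to be the long, respectively the short, copy of $D_4$ (resp.\ $A_2$) yields $6\,W(D_4)((u))$ (resp.\ $2\,W(A_2)((u))$) in both cases.

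The main obstacle is not conceptual but lies in the root-system bookkeeping of the previous paragraph: one must check that for $F_4$ (and $G_2$) \emph{both} length classes are genuinely of type $D_4$ (resp.\ $A_2$), and that in every case the complementary simple roots generate the asserted parabolic subgroup of the stated order. Once these identifications are secured, the symmetric use of Proposition~\ref{1-2} makes each three-term identity automatic.
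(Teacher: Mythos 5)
Your proposal is correct, and it rests on the same key result as the paper's own proof: Proposition~\ref{1-2} combined with the identification of the length classes and of the parabolic subgroups $W_J$. The difference lies in how the short-root-weighted polynomials are treated. The paper keeps the convention $\Delta_1=\Delta_L$ throughout and converts $W(C_r)((1),(u))$, $W(B_r)((1),(u))$, $W(F_4)((1),(u))$ and $W(G_2)((1),(u))$ into long-root-weighted polynomials via duality (its item (i): $\Delta_L(B_r)\simeq\Delta_S(C_r)$, $\Delta_S(B_r)\simeq\Delta_L(C_r)$, together with the implicit self-duality of $F_4$ and $G_2$), after which one application of the proposition per line suffices. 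You never invoke duality: you exploit the fact that the statement and proof of Proposition~\ref{1-2} are symmetric in the two length classes (the proof only uses $\pm\alpha_i\notin\Delta_1$ for $i\in J$, never which class is long), so the proposition may be applied with the short class in the role of $\Delta_1$, and each line of the corollary follows from two independent applications. This costs you a little more bookkeeping --- you must also identify $\Delta_S(B_r)\simeq A_1^r$, $\Delta_S(C_r)\simeq D_r$, $\Delta_S(F_4)\simeq D_4$, $\Delta_S(G_2)\simeq A_2$, which the paper's list omits --- but it yields the middle equalities such as $W(B_r)((u),(1))=W(C_r)((1),(u))$ for free, both sides being computed independently to equal $2W(D_r)((u))$, rather than deduced from the $B_r$/$C_r$ duality. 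Your remaining ingredients (the orders $\abs{W(A_1)}=2$, $\abs{W(A_{r-1})}=r!$, $\abs{W(A_2)}=6$, and the factorization $W(A_1^r)((u))=(W(A_1)((u)))^r=(1+u)^r$ over orthogonal components) agree with the paper's items (ii)--(iii) and are correct.
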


\begin{proof}
These are simple consequences of Proposition \ref{1-2}. 
We just notice that 

(i) $\Delta_L(B_r)\simeq\Delta_S(C_r)$ and
$\Delta_S(B_r)\simeq\Delta_L(C_r)$,

(ii) $\Delta_L(B_r)\simeq\Delta(D_r)$, $\Delta_L(C_r)\simeq\Delta(A_1^r)$, 
$\Delta_L(F_4)\simeq\Delta(D_4)$, $\Delta_L(G_2)\simeq\Delta(A_2)$, and
$\Psi_2(B_r)\simeq\Psi(A_1)$, $\Psi_2(C_r)\simeq\Psi(A_{r-1})$,
$\Psi_2(F_4)\simeq\Psi(A_2)$, $\Psi_2(G_2)\simeq\Psi(A_1)$ 
(which can be seen from the list of roots of each system; see 
\cite[Planche]{Bou}, \cite[Section 2.14]{Sam}),

(iii) $W(A_r)\simeq\mathfrak{S}_{r+1}$, and hence
$|W(A_r)|=(r+1)!$.
\end{proof}

\section{The non-vanishing of the sum of coefficients}\label{sec6}

Now we carry out the case study when \eqref{sum-of-coeff} vanishes, or does not vanish.
Assume 
$s_\alpha=s_{w\alpha}$ for all $w\in W^I$ and $\alpha\in\Delta$.
Assume 
$s_\alpha=k_\alpha\in\mathbb{N}$ for all $\alpha\in\Delta\setminus \Delta_I$,
and let $u_\alpha=(-1)^{-k_\alpha}$ for those $\alpha$.

\begin{lemma}
Let $y\in W^I$. Then
  $\Delta_{y^{-1}}\cap \Delta_{I+}=\emptyset$.
\end{lemma}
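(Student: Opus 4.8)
The plan is to derive the assertion directly from the two relevant definitions, the only genuine point being the translation between the coroot condition that defines $W^I$ and the corresponding condition on roots.

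First I would restate membership in $W^I$ in terms of roots rather than coroots. Because the Weyl group action commutes with the coroot map, $w\alpha^{\vee}=(w\alpha)^{\vee}$, and because a root is positive exactly when its coroot is positive, the condition $\Delta_{I+}^{\vee}\subset y\Delta_+^{\vee}$ defining $W^I$ is equivalent to $\Delta_{I+}\subset y\Delta_+$, i.e.\ to $y^{-1}\Delta_{I+}\subset\Delta_+$. Hence for $y\in W^I$ one has $y^{-1}\alpha\in\Delta_+$ for every $\alpha\in\Delta_{I+}$.

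Next I would unwind $\Delta_{y^{-1}}$ by taking $w=y^{-1}$ in the defining formula $\Delta_w=\Delta_+\cap w^{-1}\Delta_-$, which gives $\Delta_{y^{-1}}=\Delta_+\cap y\Delta_-$. Thus an element $\alpha\in\Delta_{y^{-1}}$ is a positive root satisfying $y^{-1}\alpha\in\Delta_-$. Now suppose, for contradiction, that $\alpha\in\Delta_{y^{-1}}\cap\Delta_{I+}$. From $\alpha\in\Delta_{y^{-1}}$ we get $y^{-1}\alpha\in\Delta_-$, while from $\alpha\in\Delta_{I+}$ together with $y\in W^I$ we get $y^{-1}\alpha\in\Delta_+$; these cannot both hold, so the intersection is empty.

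The computation is short, and I expect no real obstacle beyond carefully justifying the coroot-to-root translation in the first step; once that equivalence is in hand, the conclusion is a direct substitution of the definitions of $\Delta_{y^{-1}}$ and $W^I$.
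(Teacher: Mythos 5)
Your proof is correct, but it takes a genuinely different route from the paper's. You unwind the definitions directly: translating the coroot condition $\Delta_{I+}^{\vee}\subset y\Delta_+^{\vee}$ into $y^{-1}\alpha\in\Delta_+$ for all $\alpha\in\Delta_{I+}$ (valid since $w\alpha^{\vee}=(w\alpha)^{\vee}$ and $\alpha^{\vee}$ is a positive multiple of $\alpha$, so positivity is preserved), and noting that $\alpha\in\Delta_{y^{-1}}=\Delta_+\cap y\Delta_-$ forces $y^{-1}\alpha\in\Delta_-$; the two conclusions are incompatible. The paper instead argues through its decomposition machinery: given $\alpha\in\Delta_{y^{-1}}\cap\Delta_{I+}$, it forms $w=\sigma_\alpha y$ with $\sigma_\alpha\in W_I$, invokes the unique factorization $W=W_IW^I$ together with the disjoint-union formula $\Delta_{w^{-1}}=\sigma_\alpha\Delta_{y^{-1}}\sqcup\Delta_{\sigma_\alpha}$, and derives a contradiction from $-\alpha=\sigma_\alpha\alpha\in\sigma_\alpha\Delta_{y^{-1}}\subset\Delta_{w^{-1}}\subset\Delta_+$. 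Your argument is more elementary and self-contained, requiring neither the uniqueness of the $W_IW^I$ factorization nor the length-additivity underlying the disjoint-union formula; the paper's version, on the other hand, stays inside the framework it has just built (and reuses in the Poincar\'e polynomial computations), at the cost of invoking heavier tools for what is ultimately a one-line verification.
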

\begin{proof}
Assume $\alpha\in\Delta_{y^{-1}}\cap \Delta_{I+}$. Then
$s_\alpha\in W_I$. Let $w=s_\alpha y$; this expression gives the unique decomposition
of $w$.
Then $\Delta_{w^{-1}}=s_\alpha\Delta_{y^{-1}}\sqcup \Delta_{s_\alpha}$ by 
\eqref{w-x-y-decomp}.
By the definition $\Delta_{w^{-1}}\subset\Delta_+$, 
while
${-\alpha}=s_{\alpha}\alpha\in s_\alpha\Delta_{y^{-1}}$, which leads to the contradiction.
\end{proof}

From the above lemma, we see that the product on the right-hand side of
\begin{equation}
  W^I(\mathbf{u})=\sum_{w\in W^I}\prod_{\alpha\in\Delta_{w^{-1}}}u_\alpha
\end{equation}
consists of only $\alpha\in\Delta_+\setminus\Delta_{I+}$.

Now consider several cases.    Assume $I\subsetneq\{1,\ldots,r\}$.

\subsection{Case 1}
This is the trivial case.
If all $k_\alpha$ are even, then $u_\alpha=1$ 
for all $\alpha\in\Delta$,
and we have $W^I((1))=\abs{W^I}>0$.

\subsection{Case 2}
If all $k_\alpha$ are odd, then we have to evaluate
\eqref{W-decomp2}
at $\mathbf{u}=(-1)$.
Let $K$ and $K_I$ be the sets of indices of even degrees (i.e., odd exponents) of the
Weyl groups $W$ and $W_I$ respectively, given as
\begin{align}
K&=\{i~|~1\leq i\leq r,d_i\in2\mathbb{Z}\}=\{i~|~1\leq i\leq r,e_i\in2\mathbb{Z}+1\},
\\
K_I&=\{i~|~i\in I,d'_i\in2\mathbb{Z}\}=\{i~|~i\in I,e'_i\in2\mathbb{Z}+1\}.
\end{align}
From \eqref{eq:Wu} and \eqref{eq:WIu} we see that $W((-1))=0$ if $K\neq \emptyset$, 
and $W_I((-1))=0$ if $K_I\neq\emptyset$.
Comparing the orders of zeros of the both sides of \eqref{WIWI=W}, 
we find that 
\begin{equation}
\abs{K}\geq
\abs{K_I},
\end{equation}
hence $W^I((-1))$ does not vanish if and only if 
\begin{equation}\label{K=KI}
  \abs{K}=\abs{K_I}.
\end{equation}
If it holds, then by \eqref{eq:Wu}, \eqref{eq:WIu} and l'H\^opital's rule
\begin{equation}\label{lHopital}
  \begin{split}
    W^I((-1))&=
    \lim_{u\to-1}W^I((u))\\
    &=
    \lim_{u\to-1}
    \frac{\prod_{i\in K^c}(1+\cdots+u^{e_i})}
    {\prod_{i\in K_I^c}(1+\cdots+u^{e'_i})}
    \frac{\prod_{i\in K}(u^{d_i}-1)}
    {\prod_{i\in K_I}(u^{d'_i}-1)}
    \\
    &=
    \frac{\prod_{i\in K^c}1}{\prod_{i\in K_I^c}1}
    \frac{\prod_{i\in K}d_i}{\prod_{i\in K_I}d'_i}
    \\
    &=
    \frac{\prod_{i\in K}d_i}{\prod_{i\in K_I}d'_i}
    \in\mathbb{N}.
  \end{split}
\end{equation}

The above argument especially implies the following
\begin{claim}
If all degrees of a root system $X_r$ are even, then 
$W^I((-1))$ vanishes for any choice of $I\subsetneq\{1,\ldots,r\}$.
\end{claim}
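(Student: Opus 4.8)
The plan is to deduce the claim directly from the non-vanishing criterion just established in Case~2, without computing any new data. First I would unwind the hypothesis: saying that \emph{all} degrees of $X_r$ are even means $d_i\in2\mathbb{Z}$ for every $1\le i\le r$, so by the definition of $K$ we have $K=\{1,\ldots,r\}$ and hence $\abs{K}=r$.

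Next I would bound $\abs{K_I}$ using nothing more than its definition. Since $K_I=\{i\mid i\in I,\ d'_i\in2\mathbb{Z}\}\subseteq I$, we have $\abs{K_I}\le\abs{I}$. Here the point is that $W_I$ has rank $\abs{I}$, so it possesses exactly $\abs{I}$ degrees $d'_i$ ($i\in I$); this is the source of the containment $K_I\subseteq I$, and it already forces $\abs{K_I}\le\abs{I}$ regardless of how many of those degrees happen to be even.

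The decisive step is the proper-subset hypothesis. Because $I\subsetneq\{1,\ldots,r\}$ is a \emph{proper} subset, $\abs{I}\le r-1$, whence
\[
\abs{K_I}\le\abs{I}\le r-1<r=\abs{K}.
\]
In particular $\abs{K}\neq\abs{K_I}$, so the equality \eqref{K=KI} fails. By the criterion derived in Case~2 --- that $W^I((-1))$ is non-zero if and only if $\abs{K}=\abs{K_I}$ --- we conclude that $W^I((-1))$ vanishes, as asserted.

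There is essentially no obstacle here: the whole content is the cardinality inequality $\abs{K_I}\le\abs{I}<r$, after which the result is an immediate corollary of the criterion already proved. The only subtlety worth stating explicitly is that one never needs to evaluate the degrees $d'_i$ of $W_I$; the proper-subset hypothesis on $I$ alone guarantees a strict deficit in the count of even degrees, which is exactly what the non-vanishing criterion forbids.
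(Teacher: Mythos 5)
Your proof is correct and follows essentially the same route as the paper, which argues in one line that $\abs{K}>\abs{K_I}$ for any proper $I$ and then invokes the Case~2 criterion \eqref{K=KI}. You have merely made explicit the details the paper leaves implicit (that all degrees even gives $\abs{K}=r$, and that $K_I\subseteq I$ forces $\abs{K_I}\le\abs{I}\le r-1$).
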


This is obvious from \eqref{K=KI}, because for any $I\subsetneq\{1,\ldots,r\}$, we have
$|K|>|K_I|$.
From this claim and the table in Section \ref{sec5} we find that $W^I((-1))$
always vanishes for $B_r, C_r, D_{2k}, E_7, E_8, F_4$, and $G_2$.
Therefore the only root systems for which \eqref{lHopital} can be applied are
$A_r, D_{2k+1}$, and $E_6$.
\begin{theorem}
  $W^I((-1))$ does not vanish only in one of the following cases.

  \bigskip
  \begin{center}
    \def\arraystretch{2.2}
  \begin{tabular}{c|c|c}
  Type of $\Delta$ & Type of $\Delta_I$ & $W^I((-1))$ \\
  \hline
  $A_r$ & $A_{r_1}\times\cdots\times A_{r_n}$ &
$\dfrac{[(r+1)/2]!}{[(r_1+1)/2]!\cdots[(r_n+1)/2]!}$ \\
  $D_{2k+1}$ & $D_{2k}$ & $\dfrac{2\cdot 4\cdots (4k-2)\cdot 4k}{2\cdot 4\cdots (4k-2)\cdot 2k}=2$ \\
  $E_6$ & $D_5$ & $\dfrac{2\cdot 6\cdot 8\cdot 12}{2\cdot 4\cdot 6\cdot 8}=3$ \\
  $E_6$ & $D_4$ & $\dfrac{2\cdot 6\cdot 8\cdot 12}{2\cdot 4\cdot 6\cdot 4}=6$ \\
  \hline
\end{tabular}
\end{center}
\bigskip
where $[(r_1+1)/2]+\cdots+[(r_n+1)/2]=[(r+1)/2]$.
\end{theorem}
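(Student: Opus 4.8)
The plan is to combine the two structural facts already in hand with a type-by-type inspection of parabolic subsystems. Recall that \eqref{K=KI} characterizes non-vanishing, namely $W^I((-1))\neq0$ if and only if $\abs{K}=\abs{K_I}$, and that in this case \eqref{lHopital} evaluates the value as the ratio $\prod_{i\in K}d_i/\prod_{i\in K_I}d'_i$ of products of even degrees. By the Claim and the remark following it, only $\Delta$ of type $A_r$, $D_{2k+1}$ and $E_6$ need to be considered, since for every other type $W^I((-1))$ vanishes for all proper $I$. So the theorem reduces to: for each of these three types, determine the $\Delta_I$ with $\abs{K}=\abs{K_I}$ and compute the corresponding ratio.

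First I would record the uniform bound $\abs{K_I}\leq\abs{I}$: the group $W_I$ has rank $\abs{I}$, hence exactly $\abs{I}$ degrees, so at most $\abs{I}$ of them are even. Thus equality $\abs{K}=\abs{K_I}$ forces $\abs{K}\leq\abs{I}\leq r-1$, which bounds how large $I^c$ can be. From the degree table of Section \ref{sec5} I read off $\abs{K}=[(r+1)/2]$ for $A_r$, $\abs{K}=2k$ for $D_{2k+1}$, and $\abs{K}=4$ for $E_6$ (its even degrees being $2,6,8,12$).

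For $A_r$ every parabolic is a product $\Delta_I\simeq A_{r_1}\times\cdots\times A_{r_n}$ with $\abs{K_I}=\sum_j[(r_j+1)/2]$, so by \eqref{K=KI} the non-vanishing condition is precisely the stated constraint $\sum_j[(r_j+1)/2]=[(r+1)/2]$; substituting the even degrees $2,4,\dots,2[(r+1)/2]$ of $A_r$ and of each factor into \eqref{lHopital} gives $2^m m!/\prod_j 2^{m_j}m_j!=[(r+1)/2]!/\prod_j[(r_j+1)/2]!$, where $m=[(r+1)/2]$, the powers of $2$ cancelling because $\sum_j m_j=m$. For $D_{2k+1}$ the bound $2k=\abs{K}\leq\abs{I}\leq r-1=2k$ forces $\abs{I}=2k$ (a maximal parabolic) with all $2k$ degrees even; running through the single-node deletions of the Dynkin diagram, only $D_{2k}$ has all degrees even, and \eqref{lHopital} yields the ratio $4k/2k=2$. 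For $E_6$, $\abs{K}=4$ leaves $\abs{I}\in\{4,5\}$: among the six maximal parabolics ($D_5$, $A_5$, $A_1\times A_4$, $A_2\times A_1\times A_2$, and the symmetric variants) only $D_5$ achieves $\abs{K_I}=4$, with value $3$, while among the rank-$4$ parabolics only $D_4$ does, with value $6$, the competing all-even candidate $A_1^4$ being excluded since the $E_6$ diagram has no four pairwise non-adjacent nodes.

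I expect the main obstacle to be completeness rather than any single computation. One must be certain that the list of sub-diagrams of each Dynkin diagram is exhaustive, and must correctly discard those abstract all-even candidates—most conspicuously $A_1^4$ inside $E_6$—that satisfy $\abs{K_I}=\abs{K}$ numerically but cannot arise as parabolic subsystems. Once the realizable parabolics are pinned down, verifying $\abs{K}=\abs{K_I}$ and evaluating \eqref{lHopital} against the degree table is routine arithmetic, and in each instance the resulting ratio matches the entry in the stated table.
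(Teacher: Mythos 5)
Your proposal is correct and follows essentially the same route as the paper: reduce to $A_r$, $D_{2k+1}$, $E_6$ via the Claim, then identify the parabolic subsystems satisfying $\abs{K}=\abs{K_I}$ and evaluate via \eqref{lHopital} and the degree table. The only difference is that you make explicit some steps the paper leaves implicit (the bound $\abs{K_I}\leq\abs{I}$, the enumeration of maximal parabolics of $E_6$, and the exclusion of $A_1^4$ because the $E_6$ diagram has no four pairwise non-adjacent nodes), all of which are correct.
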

\begin{proof}
  Assume that $\Delta$ is of type $E_6$. Then $\Delta_I$ must be a subroot system such that there are $4$ even degrees in it, which implies that $\Delta_I$ is either of type $D_4$ or $D_5$. 

  Assume that $\Delta$ is of type $D_{2k+1}$. Then $\Delta_I$ must be a subroot system such that there are $2k$ even degrees in it, which implies that $\Delta_I$ is of type $D_{2k}$. 

  Assume that $\Delta$ is of type $A_{r}$.
  By noting that there are $[(r+1)/2]$ even degrees and they are $2,4,\ldots,2[(r+1)/2]$, and that their product is
  \begin{equation}
    2\cdot 4\cdots\cdot (2[(r+1)/2])=2^{[(r+1)/2]}[(r+1)/2]!,
  \end{equation}
  we obtain the result.
\end{proof}

For example, see the following diagrams for the pairs $(A_{2k},A_{2k-1})$ and 
$(A_4,A_1\times A_2)$, where the set of enclosed nodes is $\Psi_I$.

\begin{gather}
\setlength{\unitlength}{1pt}\begin{picture}(0,0)
\put(149,2){\oval(238,28)}
\end{picture}
\VN{\alpha_1}\E\VN{\alpha_2}\E\V\EO\V\E\V\E\V\E\VN{\alpha_{2k}}
\\[8mm]
\setlength{\unitlength}{1pt}\begin{picture}(0,0)\put(2,2){\circle{26}}%
\put(107,2){\oval(70,28)}
\end{picture}
  \VN{\alpha_1}\E\VN{\alpha_2}\E\VN{\alpha_3}\E\VN{\alpha_4}
\end{gather}

\bigskip
Here are some examples of the case when $\Delta$ is of type $A$.
\begin{center}
\begin{tabular}{c|c|c}
  Type of $\Delta$ & Type of $\Delta_I$ & $W((-1))$ 
  \\
\hline
$A_{2k}$ & $A_{2k-1}$ & $2\cdot 4\cdots 2k/2\cdot 4\cdots 2k=1$ \\
$A_3$ & $A_1^2$ & $2\cdot 4/2\cdot 2=2$ \\
$A_4$ & $A_1^2$ & $2\cdot 4/2\cdot 2=2$ \\
$A_4$ & $A_1\times A_2$ & $2\cdot 4/2\cdot 2=2$ \\
$A_5$ & $A_1^3$ & $2\cdot 4\cdot 6/2\cdot 2\cdot 2=6$ \\
\hline
\end{tabular}
\end{center}

\subsection{Case 3}\label{sec-6-3}
Assume $k_\alpha$ are odd for $\alpha\in\Delta_1$
and $k_\beta$ are even for $\beta\in\Delta_2$. Let $\mathbf{u}=((u),(1))$.
Then we have
\begin{equation}
  W^I(\mathbf{u})=\frac{W((u),(1))}{W_I((u),(1))}
=\frac{\abs{W_J}W(\Delta_1)((u))}{\abs{W_{I\cap J}}W(\Delta_1\cap\Delta_I)((u))}
\end{equation}
by Proposition \ref{1-2}.
  By using an argument similar to Case 2, we can calculate $W^I((-1),(1))$
as follows.
Let $K_1$ and $K_{1I}$ be the sets of indices of even degrees of the
Weyl groups $W(\Delta_1)$ and $W(\Delta_1\cap\Delta_I)$ respectively, given as
\begin{align}
K_1&=\{i~|~\alpha_i\in\Delta_1,d_i\in2\mathbb{Z}\},
\\
K_{1I}&=\{i~|~\alpha_i\in \Delta_1\cap \Delta_I,d'_i\in2\mathbb{Z}\}.
\end{align}
Then we see that $W^I((-1),(1))$ does not vanish if and only if 
\begin{equation}
  \abs{K_1}=\abs{K_{1I}}.
\end{equation}
If it holds, then 
\begin{equation}
  \begin{split}
    W^I((-1),(1))&=
    \lim_{u\to-1}W^I((u),(1))\\
    &=
    \frac{\abs{W_J}}{\abs{W_{I\cap J}}}
    \lim_{u\to-1}
    \frac{\prod_{i\in K_1^c}(1+\cdots+u^{e_i})}
    {\prod_{i\in K_{1I}^c}(1+\cdots+u^{e'_i})}
    \frac{\prod_{i\in K_1}(u^{d_i}-1)}
    {\prod_{i\in K_{1I}}(u^{d'_i}-1)}
    \\
    &=
    \frac{\abs{W_J}}{\abs{W_{I\cap J}}}
    \frac{\prod_{i\in K_1^c}1}{\prod_{i\in K_{1I}^c}1}
    \frac{\prod_{i\in K_1}d_i}{\prod_{i\in K_{1I}}d'_i}
    \\
    &=
    \frac{\abs{W_J}}{\abs{W_{I\cap J}}}
    \frac{\prod_{i\in K_1}d_i}{\prod_{i\in K_{1I}}d'_i}
    \in\mathbb{N},
  \end{split}
\end{equation}
where the complement of sets of indices means that in $\{i~|~\alpha_i\in\Delta_1\}$.

\begin{theorem}
  $W^I((-1),(1))$ does not vanish only in one of the following cases.


\bigskip

\begin{center}
    \def\arraystretch{2.2}
\begin{tabular}{c|c|c|c}
  Type of $\Delta$ & Type of $\Delta_I$ & $\Delta_1$ & $W^I((-1),(1))$ 
  \\
\hline
$B_{2k+1}$ & $B_{2k}$ & $\Delta_L$ & $\dfrac{2}{2}\dfrac{2\cdot 4\cdots (4k-2)\cdot 4k}{2\cdot 4\cdots (4k-2)\cdot 2k}=2$ \\
  $C_{2k+1}$ & $C_{2k}$ & $\Delta_S$ & $\dfrac{2}{2}\dfrac{2\cdot 4\cdots (4k-2)\cdot 4k}{2\cdot 4\cdots (4k-2)\cdot 2k}=2$ \\
  $G_2$ & $A_1$ (long) & $\Delta_L$ & $\dfrac{2}{1}\dfrac{2}{2}=2$ \\
  $G_2$ & $A_1$ (short) & $\Delta_S$ & $\dfrac{2}{1}\dfrac{2}{2}=2$ \\
\hline
\end{tabular}
\end{center}
\end{theorem}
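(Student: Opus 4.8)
The plan is to reduce the whole classification to the order-of-vanishing criterion already obtained in this subsection. By Proposition \ref{1-2} we have the factorization
\[
W^I((u),(1))=\frac{\abs{W_J}}{\abs{W_{I\cap J}}}\,\frac{W(\Delta_1)((u))}{W(\Delta_1\cap\Delta_I)((u))},
\]
in which the prefactor $\abs{W_J}/\abs{W_{I\cap J}}$ is a positive rational constant. Hence, exactly as in the Case \ref{sec-6-3} computation, $W^I((-1),(1))$ is nonzero if and only if $\abs{K_1}=\abs{K_{1I}}$, and when this holds its value is $(\abs{W_J}/\abs{W_{I\cap J}})\prod_{i\in K_1}d_i/\prod_{i\in K_{1I}}d_i'$. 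So the task is purely combinatorial: run over the four non-simply laced types and the two choices $\Delta_1=\Delta_L$, $\Delta_1=\Delta_S$, and decide for which proper $I$ the even-degree counts agree.

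First I would record the rank bound. Since $\Delta_I$ is generated by $\{\alpha_i\}_{i\in I}$ with $I\subsetneq\{1,\ldots,r\}$, the subsystem $\Delta_1\cap\Delta_I\subset\Delta_I$ has rank at most $\abs{I}\leq r-1$, whereas $\Delta_1$ has full rank $r$; as $\abs{K_{1I}}$ can never exceed the rank of $\Delta_1\cap\Delta_I$, the equality $\abs{K_1}=\abs{K_{1I}}$ forces $\abs{K_1}\leq r-1$, i.e.\ $\Delta_1$ must carry at least one \emph{odd} degree. Reading off the degrees from the table of Section \ref{sec5} together with the identifications of Corollary \ref{cor:twolen} ($\Delta_L(B_r)\simeq\Delta_S(C_r)\simeq D_r$, $\Delta_S(B_r)\simeq\Delta_L(C_r)\simeq A_1^r$, $\Delta_L(F_4)\simeq\Delta_S(F_4)\simeq D_4$, $\Delta_L(G_2)\simeq\Delta_S(G_2)\simeq A_2$), I would discard at once every pair $(\Delta,\Delta_1)$ in which $\Delta_1$ has only even degrees: this eliminates $A_1^r$ (all degrees $2$), $D_4$, and $D_r$ with $r$ even. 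The survivors are precisely $\Delta_1\simeq D_r$ with $r$ odd (from $B_r$ and $C_r$) and $\Delta_1\simeq A_2$ (from $G_2$); in each the odd degree is unique, so $\abs{K_1}=r-1$, which is exactly the maximal admissible rank of $\Delta_1\cap\Delta_I$.

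Consequently, in every survivor the chain $\abs{K_1}=\abs{K_{1I}}\leq\mathrm{rank}(\Delta_1\cap\Delta_I)\leq r-1=\abs{K_1}$ collapses, forcing $\Delta_1\cap\Delta_I$ to have rank $r-1$ with \emph{all} degrees even; so I only need to enumerate which proper parabolics cut out such a subsystem. For $B_r$ with $\Delta_1=\Delta_L\simeq D_r$, deleting a node $j\leq r-1$ (retaining the short node) gives $\Delta_1\cap\Delta_I\simeq A_{j-1}\times D_{r-j}$, which is all-even only for $j=1$ together with $r$ odd (for $j=2$ one would need $r$ even, and $j\geq3$ introduces the odd degree $3$ of $A_{j-1}$), yielding $\Delta_I\simeq B_{r-1}=B_{2k}$; deleting the short node $r$ gives $A_{r-1}$, which carries odd degrees, and deleting two or more nodes drops the rank below $r-1$. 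The $C_r$ case is identical under the duality $\Delta_S(C_r)\simeq D_r$, giving $\Delta_I\simeq C_{2k}$, and for $G_2$ one has $\abs{K_1}=1$ with the sole rank-one all-even subsystem being the $A_1$ spanned by the long (resp.\ short) simple root, hence $\Delta_I\simeq A_1$ long (resp.\ short). Substituting $\abs{W_J}=\abs{W(\Psi_2)}$ and the explicit even degrees into the value formula then reproduces the entry $2$ in all four rows. The main obstacle is this enumeration: one must argue cleanly that deletion of any node other than those listed either lowers the rank of the long- (or short-) root subsystem below $r-1$ or splits it so as to produce an $A$-factor of rank $\geq2$ or a $D_m$-factor with $m$ odd, in either case creating an odd degree and breaking the equality.
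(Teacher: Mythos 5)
Your proposal is correct and follows essentially the same route as the paper: both rest on the non-vanishing criterion $\abs{K_1}=\abs{K_{1I}}$ and the value formula from Section \ref{sec-6-3} (via Proposition \ref{1-2} and Corollary \ref{cor:twolen}), eliminate the pairs where $\Delta_1$ has only even degrees ($F_4$, $A_1^r$, $D_r$ with $r$ even), and then identify the unique admissible $\Delta_I$ in the surviving cases. The only difference is one of detail: your rank-bound chain $\abs{K_{1I}}\leq\mathrm{rank}(\Delta_1\cap\Delta_I)\leq\abs{I}\leq r-1$ and the node-by-node enumeration make rigorous the uniqueness step that the paper asserts tersely (``which is the unique choice such that $\abs{K_1}=\abs{K_{1I}}$''), so your write-up is a legitimate filled-in version of the paper's own argument.
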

\begin{proof}
  The root systems which have roots of two (long and short) lengths are of type $B_r,C_r,F_4,G_2$.
  
When $\Delta$ is of type $F_4$, by Corollary \ref{cor:twolen}, $\Delta_1$ is of
type $D_4$.
Since the degrees of $\Delta(D_4)$ are all even, we see that $W^I((-1),(1))$ vanishes for any choice of $I$ in $\Delta(F_4)$.

  Assume that $\Delta$ is of type $B_{r}$. Then by Corollary \ref{cor:twolen},
$\Delta_1$ must be $\Delta_L$ and
$r$ must be odd because otherwise all the degrees are even.
When $\Delta_1=\Delta_L$, $(r-1)$ degrees are even.
Hence $\Delta_I$ must be of type $B_{r-1}$ whose degrees are all even, which is the 
unique choice such that $|K_1|=|K_{1I}|$.

In the case of $C_r$, a similar argument works and we obtain the result.

Assume that $\Delta$ is of type $G_2$. Then $\Delta_1$ is either $\Delta_L$ or $\Delta_S$.
However $\Delta_I$ must be determined uniquely
so that
$|K_1|=|K_{1I}|$, that is, $\Delta_I\subset\Delta_1$.
\end{proof}

For example, see the following diagram for the pair $(B_{2k+1},B_{2k})$, where the set of enclosed nodes is $\Psi_I$.

\begin{equation}
\setlength{\unitlength}{1pt}\begin{picture}(0,0)
\put(149,2){\oval(238,28)}
\end{picture}
\VN{\alpha_1}\E\VN{\alpha_2}\E\V\EO\V\E\V\E\V\EB{\E}{\E}\VN{\!\!\!\!\!\!\alpha_{2k+1}}
\end{equation}
\bigskip

\section{Examples}\label{sec7}

In this section, we give several explicit examples of the functional relations among zeta-functions of root systems. For the case ${\bf y}=(0)$, we write $\zeta_r({\bf s};\Delta)$ instead of $\zeta_r({\bf s},{\bf y};\Delta)$ for short.
First we consider the $A_r$-type. 

\begin{example}\label{ex-1}
Set $r=2$, $s_{23}\in \mathbb{R}_{>1}$ and $(y_1,y_2)=(0,0)$ in \eqref{Fq-rel-1}. By considering its real part, we obtain
\begin{align}
& \zeta_2(k_{12},k_{13},s_{23};A_2)+(-1)^{k_{12}}\zeta_2(k_{12},s_{23},k_{13};A_2)\label{A2-form}\\
& \quad +(-1)^{k_{12}+k_{13}}\zeta_2(s_{23},k_{12},k_{13};A_2)\notag\\
& =2\sum_{j_2=0}^{[k_{12}/2]}(-1)^{k_{12}}\binom{k_{12}+k_{13}-1-2j_2}{k_{13}-1}\zeta(2j_2)\zeta(k_{12}+k_{13}+s_{23}-2j_2)\notag\\
& \ +2\sum_{j_3=0}^{[k_{13}/2]}(-1)^{k_{13}}\binom{k_{12}+k_{13}-1-2j_3}{k_{12}-1}\zeta(2j_3)\zeta(k_{12}+k_{13}+s_{23}-2j_3),\notag
\end{align}
where we use the well-known formula
$$\zeta(2k)=-\frac{B_{2k}(0)(2\pi \sqrt{-1})^{2k}}{2(2k)!}\quad (k\in \mathbb{Z}_{\geq 0}).$$
Dividing the both sides by $(-1)^{k_{12}}$, 
we recover the known result in \cite[Theorem 3.1]{KMTNicchuu}, which is equivalent to \cite[Theorem 4.5]{Tsu07} (see also \cite{Nak06}). Here we note that 
$$\zeta_2(s_{12},s_{13},s_{23};A_2)=\sum_{m_1=1}^\infty \sum_{m_2=1}^\infty \frac{1}{m_1^{s_{12}}{(m_1+m_2)}^{s_{13}}m_2^{s_{23}}},$$
where the order of indices is slightly different from that in \cite{KMTNicchuu}.
\end{example}

\begin{example}\label{ex-2}
Set $r=3$, $(s_{23},s_{24},s_{34})\in (\mathbb{R}_{>1})^3$ and $(y_1,y_2,y_3)=(0,0,0)$ in \eqref{Fq-rel-1}. By considering its real part, we obtain
\begin{align}
& \zeta_3(k_{12},k_{13},k_{14},s_{23},s_{24},s_{34};A_3) \label{A3-fq-01}\\
& +(-1)^{k_{12}}\zeta_3(k_{12},s_{23},s_{24},k_{13},k_{14},s_{34};A_3) \notag\\
& +(-1)^{k_{12}+k_{13}}\zeta_3(s_{23},k_{12},s_{24},k_{13},s_{34},k_{14};A_3) \notag\\
& +(-1)^{k_{12}+k_{13}+k_{14}}\zeta_3(s_{23},s_{24},k_{12},s_{34},k_{13},k_{14};A_3) \notag\\
&=2\sum_{j_2=0}^{[k_{12}/2]}\sum_{l_3,l_4\geq 0 \atop l_3+l_4=k_{12}-2j_2}(-1)^{k_{12}}\binom{k_{13}+l_3-1}{l_3}\binom{k_{14}+l_4-1}{l_4}\notag\\
& \qquad \times \zeta(2j_2)\zeta_2(s_{23}+k_{13}+l_3,s_{24}+k_{14}+l_4,s_{34};A_2)\notag\\
&=2\sum_{j_3=0}^{[k_{13}/2]}\sum_{l_2,l_4\geq 0 \atop l_2+l_4=k_{13}-2j_3}(-1)^{k_{12}+l_4}\binom{k_{12}+l_2-1}{l_2}\binom{k_{14}+l_4-1}{l_4}\notag\\
& \qquad \times \zeta(2j_3)\zeta_2(s_{23}+k_{12}+l_2,s_{24},s_{34}+k_{14}+l_4;A_2)\notag\\
&=2\sum_{j_4=0}^{[k_{14}/2]}\sum_{l_2,l_3\geq 0 \atop l_2+l_3=k_{14}-2j_4}(-1)^{k_{12}+k_{13}}\binom{k_{12}+l_2-1}{l_2}\binom{k_{13}+l_3-1}{l_3}\notag\\
& \qquad \times \zeta(2j_4)\zeta_2(s_{23},s_{24}+k_{12}+l_2,s_{34}+k_{13}+l_3;A_2).\notag
\end{align}
This formula is apparently different from the previous result in \cite[Theorem 4.4]{KMTKyushu}. 
For example, we obtain from \eqref{A3-fq-01} with $(k_{ij})=(2)$ and $(s_{ij})=(2)$ that
\begin{align*}
4\zeta_3(2,2,2,2,2,2;A_3) & =2\zeta(2)\left\{2\zeta_2(4,4,2;A_2)+\zeta_2(4,2,4;A_2)\right\}\\
& -6\zeta_2(6,4,2;A_2)-6\zeta_2(6,2,4;A_2)-8\zeta_2(5,5,2;A_2)\\
& +4\zeta_2(5,2,5;A_2)-6\zeta_2(4,6,2;A_2).
\end{align*}
On the other hand, we already obtained from \cite[Eq.\,(4.28)]{KMTKyushu} that
\begin{align*}
4\zeta_3(2,2,2,2,2,2;A_3) & =8\zeta(2)\left\{\zeta_2(4,4,2;A_2)+\zeta_2(3,5,2;A_2)\right\}\\
& -12\zeta_2(6,4,2;A_2)-12\zeta_2(5,5,2;A_2)-6\zeta_2(4,6,2;A_2).
\end{align*}
These two right-hand sides are apparently different.
However we can check that both of the right-hand sides are equal to 
$887\pi^{12}/3831077250$ by the method of partial fraction decompositions (see \cite[Example 4.5]{KMTKyushu}). 
\end{example}

\if0
\begin{example}
As well as Theorem \ref{Th-Fq-Ar}, we can consider the case $A_r$ with $I=\{ 1,\ldots,r\}\setminus \{j\}$ $(2\leq j\leq r)$. For example, setting $r=3$ and $I=\{1,3\}$ in \eqref{Fq-general}, we can construct 
$$F((t_\beta)_{\beta \in \Delta_*},(y_1,y_2,y_3),(m_1,m_3);\{ 1,3\};\Delta)$$
with $\Delta=\Delta(A_3)$ which gives functional relations for zeta-functions of $A_3$-type. In fact, we additionally use partial fraction decompositions and can conseqently compute, for example,
\begin{align*}
   &\zeta_3(s_1,s_2,1,1,1,1;A_3) - \zeta_3(1,s_1,1,1,s_2,1;A_3)
   + \zeta_3(1,1,s_1,s_2,1,1;A_3) \\
&\quad + \zeta_3(s_2,1,1,1,1,s_1;A_3)
   - \zeta_3(1,s_2,1,1,s_1,1;A_3) + \zeta_3(1,1,s_2,s_1,1,1;A_3)\\
& \ =3\zeta_2(s_1+2,1,s_2+1;A_2)+ \frac52\zeta_2(s_1+1,1,s_2+2;A_2) + 2\zeta_2(s_1+1,2,s_2+1;A_2) \\
& \quad - \frac32\zeta_2(s_1,1,s_2+3;A_2)-\zeta_2(2,s_1,s_2+2;A_2) - \zeta_2(2,s_1+1,s_2+1;A_2) \\
& \quad - \zeta_2(2,s_2+1,s_1+1;A_2) - \zeta_2(2,s_2+2,s_1;A_2)+\frac32\zeta_2(1,s_1,s_2+3;A_2) \\
& \quad + \frac12\zeta_2(1,s_1+1,s_2+2;A_2) - \frac12\zeta_2(1,s_2+2,s_1+1;A_2) - \frac32\zeta_2(1,s_2+3,s_1;A_2) \\
& \quad -2\zeta(2)\zeta(s_1+s_2+2) + \zeta(s_1+s_2+4).
\end{align*}
In particular, setting $(s_1,s_2)=(1,1)$, we obtain the new formula
\begin{align}
   2\zeta_3(1,1,1,1,1,1;A_3) &= \frac92 \zeta_2(2,1,3;A_2)-\frac32 \zeta_2(1,3,2;A_2) +\frac12\zeta_2(1,2,3;A_2) \label{A_3-value}\\
& -\frac32 \zeta_2(1,4,1;A_2)-2\zeta(2)\zeta(4)+\zeta(6)\notag\\
& =4\zeta(3)^2-\frac{31}{5670}\pi^6,\notag
\end{align}
where we use partial fraction decompositions and the well-known formula
$$\zeta(s_1)\zeta(s_2)=\zeta_2(s_1,s_2,0;A_2)+\zeta_2(s_2,s_1,0;A_2)+\zeta(s_1+s_2).$$
It is noted that \eqref{A_3-value} cannot be drived from Theorem \ref{Th-Fq-2}, because the left-hand side of \eqref{Fq-rel-1} vanishes for $(s_{ij})=(1)$. 
Furthermore, \eqref{A_3-value} cannot be drived from the previous results in 
\cite{KMTKyushu,KMTNicchuu,KMTPalanga} by the same reason as above.
\end{example}
\fi

Next we consider the $C_r$-type.

\begin{example}
We define zeta-functions of root systems of $C_2$-type and $C_3$-type by
\begin{align*}
& \zeta_2(s_1,s_2,s_3,s_4;C_2) =\sum_{m,n\geq 1}\frac{1}{m^{s_1}n^{s_2}(m+n)^{s_3}(m+2n)^{s_4}},\\
& \zeta_3(s_1,s_2,s_3,s_4,s_5,s_6,s_7,s_8,s_9;C_3)\\
& =\sum_{m_1,m_2,m_3\geq 1}\frac{1}{m_1^{s_1}m_2^{s_2}m_3^{s_3}(m_1+m_2)^{s_4}(m_2+m_3)^{s_5}(m_2+2m_3)^{s_6}}\\
& \quad \times \frac{1}{(m_1+m_2+m_3)^{s_7}(m_1+m_2+2m_3)^{s_8}(m_1+2m_2+2m_3)^{s_9}}.
\end{align*}
These were already studied in \cite{KMTWitten2,KMTLondon,KMTNicchuu}. 
Here we give explicit forms of functional relations among them as follows. 

Let $r=3$, $\Delta=\Delta(C_3)$, $I=\{ 2,3\}$ and $(y_1,y_2,y_3)=(0,0,0)$ 
in \eqref{3-41}. Then we have
  \begin{multline}\label{C3-gene-F}
  F((t_1,t_{\pm 2},t_{\pm 3}),\mathbf{0},(m_2,m_3);\{2,3\};C_3)  
  \\
  \begin{aligned}
    &= 
    \frac{t_{-3}}{t_{-3}-t_{-2}-2\pi\sqrt{-1}m_2}
    \frac{t_{+2}}{t_{+2}-t_{-2}-2\pi\sqrt{-1}(2(m_2+m_{3}))}
    \\
    &\quad\times
    \frac{t_{+3}}{t_{+3}-t_{-2}-2\pi\sqrt{-1}(m_2+2m_{3})}
    \frac{t_1}{t_1-t_{-2}-2\pi\sqrt{-1}(m_2+m_{3})}
    \frac{t_{-2}}{e^{t_{-2}}-1}
    \\
    &+ 
    \frac{t_{-2}}{t_{-2}-t_{-3}+2\pi\sqrt{-1}m_2}
    \frac{t_{+2}}{t_{+2}-t_{-3}-2\pi\sqrt{-1}(m_2+2m_3)}
    \\
    &\quad\times
    \frac{t_{+3}}{t_{+3}-t_{-3}-2\pi\sqrt{-1}(2m_3)}
    \frac{t_1}{t_1-t_{-3}-2\pi\sqrt{-1}m_3}
    \frac{t_{-3}}{e^{t_{-3}}-1}
    \\
    &+ 
    \frac{t_{-2}}{t_{-2}-t_{+2}+2\pi\sqrt{-1}(2(m_2+m_{3}))}
    \frac{t_{-3}}{t_{-3}-t_{+2}+2\pi\sqrt{-1}(m_2+2m_3))}
    \\
    &\quad\times
    \frac{t_{+3}}{t_{+3}-t_{+2}+2\pi\sqrt{-1}m_2}
    \frac{t_1}{t_1-t_{+2}+2\pi\sqrt{-1}(m_2+m_{3})}
    \frac{t_{+2}}{e^{t_{+2}}-1}
    \\
    &+ 
    \frac{t_{-2}}{t_{-2}-t_{+3}+2\pi\sqrt{-1}(m_2+2m_3)}
    \frac{t_{-3}}{t_{-3}-t_{+3}+2\pi\sqrt{-1}(2m_3)}
    \\
    &\quad\times
    \frac{t_{+2}}{t_{+2}-t_{+3}-2\pi\sqrt{-1}m_2}
    \frac{t_1}{t_1-t_{+3}+2\pi\sqrt{-1}m_3}
    \frac{t_{+3}}{e^{t_{+3}}-1}
    \\
    &+
    \frac{t_{-2}}{t_{-2}-t_{1}+2\pi\sqrt{-1}(m_2+m_3)}
    \frac{t_{-3}}{t_{-3}-t_{1}+2\pi\sqrt{-1}m_3}
    \\
    &\quad\times
    \frac{t_{+2}}{t_{+2}-t_{1}-2\pi\sqrt{-1}(m_2+m_3)}
    \frac{t_{+3}}{t_{+3}-t_{1}-2\pi\sqrt{-1}m_3}
    \frac{t_{1}}{e^{t_{1}}-1}.
\end{aligned}
\end{multline}
Hence we can compute $P(\mathbf{k},\mathbf{y},\lambda;I;C_3)$ and give some functional relations from Theorem \ref{thm:main1}. 
For example, we obtain
\begin{multline}
  P((2,1,1,1,1),\mathbf{0},(m_2,m_3);\{2,3\};C_3)=
  \\
  \begin{aligned}
    &\frac{1}{32 \pi ^6 m_2^2 m_3^3 (m_2+2 m_3)}+\frac{1}{32 \pi ^6 m_2^2 (m_2+m_3)^3 (m_2+2 m_3)}
    \\
    &-\frac{1}{32 \pi ^6 m_3^4 (m_2+m_3)^2}-\frac{5}{64 \pi ^6 m_2 m_3^4 (m_2+2 m_3)}
    \\
    &-\frac{1}{32 \pi ^6 m_2 m_3^3 (m_2+2 m_3)^2}-\frac{1}{32 \pi ^6 m_3^2 (m_2+m_3)^4}+\frac{1}{96 \pi ^4 m_3^2 (m_2+m_3)^2}
    \\
    &+\frac{5}{64 \pi ^6 m_2 (m_2+m_3)^4 (m_2+2 m_3)}+\frac{1}{32 \pi ^6 m_2 (m_2+m_3)^3 (m_2+2 m_3)^2}.
  \end{aligned}
\end{multline}
Hence we have
\begin{align}
&                  \zeta_3(1,s,t,1,u,v,2,1,1;C_3)
        -    \zeta_3(1,1,t,s,2,1,u,v,1;C_3) \label{C3-fr-1}\\
& +     \zeta_3(s,1,2,1,t,1,u,1,v;C_3)
 +     \zeta_3(s,1,2,1,t,1,u,1,v;C_3)\notag\\
&-   \zeta_3(1,1,t,s,2,1,u,v,1;C_3)
 + \zeta_3(1,s,t,1,u,v,2,1,1;C_3)\notag\\
    & =
    (-1)^5\frac{(2\pi\sqrt{-1})^{6}}{2!1!1!1!1!}
    \sum_{m_2=1}^\infty
    \sum_{m_3=1}^\infty \frac{P((2,1,1,1,1),\mathbf{0},(m_2,m_3);\{2,3\};C_3)}{m_2^s m_3^t (m_2+m_3)^u (m_2+2m_3)^v}\notag\\
& =\zeta_2(s+2,t+3,u,v+1;C_2) +\zeta_2(s+2,t,u+3,v+1;C_2)  \notag\\
& \quad -\zeta_2(s,t+4,u+2,v;C_2)-\frac{5}{2}\zeta_2(s+1,t+4,u,v+1;C_2) \notag\\
& \quad -\zeta_2(s+1,t+3,u,v+2;C_2) -\zeta_2(s,t+2,u+4,v;C_2)\notag\\
& \quad  +\frac{\pi^2}{3}\zeta_2(s,t+2,u+2,v;C_2) +\frac{5}{2}\zeta_2(s+1,t,u+4,v+1;C_2) \notag\\
& \quad +\zeta_2(s+1,t,u+3,v+2;C_2).\notag
\end{align}
Setting $(s,u,v)=(1,2,1)$, we obtain
\begin{align}
& 2\zeta_3(1,1,2,1,t,1,2,1,1;C_3)\label{C3-fr-2}\\
& =\zeta_2(3,t+3,2,2;C_2) +\zeta_2(3,t,5,2;C_2) -\zeta_2(1,t+4,4,1;C_2) \notag\\
& \quad -\frac{5}{2}\zeta_2(2,t+4,2,2;C_2) -\zeta_2(2,t+3,2,3;C_2) \notag\\
& \quad  -\zeta_2(1,t+2,6,1;C_2)+\frac{\pi^2}{3}\zeta_2(1,t+2,4,1;C_2)  \notag\\
& \quad +\frac{5}{2}\zeta_2(2,t,6,2;C_2)+\zeta_2(2,t,5,3;C_2).\notag
\end{align}
In particular when $t=2$, this is an example of the assertion in 
Section \ref{sec-6-3} corresponding to $\Delta=\Delta(C_3)$, $\Delta_I=\Delta(C_2)$ with $I=\{ 1\}$ and $W^I((-1),(1))=2$, because the left-hand does not vanish and its coefficient is $2$.

Moreover we here recall the known fact that 
\begin{equation}
\zeta_2(a,b,c,d;C_2)\in \mathbb{Q}\left[\pi^2,\{\zeta(2j+1)\}_{j\in \mathbb{N}}\right]\label{C2-Parity}
\end{equation}
for $a,b,c,d\in \mathbb{N}$ with $2 \nmid (a+b+c+d)$, 
which was given by the third-named author (see \cite{Tsu04}). If we set $t=2k-1$ $(k\in \mathbb{N})$ in \eqref{C3-fr-2}, we obtain from \eqref{C2-Parity} that 
\begin{equation}
\begin{split}
&\zeta_3(1,1,2,1,2k-1,1,2,1,1;C_3)\in \mathbb{Q}\left[\pi^2,\{\zeta(2j+1)\}_{j\in \mathbb{N}}\right]
\end{split}
\label{C3-Parity}
\end{equation}
for $k\in \mathbb{N}$. Setting $t=1,3$, we have
\begin{align*}
& 2\zeta_3(1,1,2,1,1,1,2,1,1;C_3)= \frac{3}{20} \zeta(7)\pi^4 - \frac{233}{16}\zeta(9)\pi^2 + \frac{4135}{32} \zeta(11),\\
& 2\zeta_3(1,1,2,1,3,1,2,1,1;C_3) = -\frac{7}{15} \zeta(9)\pi^4 + \frac{681}{16} \zeta(11)\pi^2 - \frac{5995}{16} \zeta(13).
\end{align*}
More generally, we can prove that 
\begin{equation}
\begin{split}
&\zeta_3(2q-1,2q-1,2r,2p-1,2k-1,2q-1,2r,2p-1,2q-1;C_3)\\
& \qquad \in \mathbb{Q}\left[\pi^2,\{\zeta(2j+1)\}_{j\in \mathbb{N}}\right]
\end{split}
\label{C3-Parity}
\end{equation}
for $p,q,r,k\in \mathbb{N}$. 
For example, we can give
\begin{align*}
& 2 \zeta_3(1,1,2,3,1,1,2,3,1;C_3)\\
&\quad = \frac{1}{10080} \zeta(7)\pi^8 + \frac{7}{180} \zeta(9)\pi^6 
  + \frac{13217}{576} \zeta(11)\pi^4 \\
& \qquad - \frac{4132493}{1536}\zeta(13)\pi^2 
  + \frac{24864015}{1024} \zeta(15),\\
&2 \zeta_3(1,1,2,3,5,1,2,3,1;C_3)\\
&\quad = \frac{43}{90720} \zeta(11)\pi^8 + \frac{859}{2520} \zeta(13)\pi^6  
  + \frac{140051}{576} \zeta(15)\pi^4 \\
& \qquad - \frac{42288073}{1536} \zeta(17)\pi^2 
  + \frac{253652169}{1024} \zeta(19).
\end{align*}

\end{example}

Finally we consider the $G_2$-type.

\begin{example}
We define the zeta-function of root system of $G_2$-type by
\begin{align*}
& \zeta_2(s_1,s_2,s_3,s_4,s_5,s_6;G_2) \\
& \quad =\sum_{m,n\geq 1}\frac{1}{m^{s_1}n^{s_2}(m+n)^{s_3}(m+2n)^{s_4}(m+3n)^{s_5}(2m+3n)^{s_6}}.
\end{align*}
This was already studied in \cite{KMTWitten4,KMTWitten5}. In fact, it follows from \cite[Theorem 6.1]{KMTWitten5} that 
for $p,q,r,u,v \in \mathbb{N}$,
\begin{equation}
\begin{split}
& \zeta_2(p,s,q,r,u,v;G_2)+(-1)^p \zeta_2(p,q,s,r,v,u;G_2)\\
& \ \ +(-1)^{p+q}\zeta_2(v,q,r,s,p,u;G_2)+(-1)^{p+q+v}\zeta_2(v,r,q,s,u,p;G_2) \\
& \ \ +(-1)^{p+q+r+v}\zeta_2(u,r,s,q,v,p;G_2)\\
& \ \ +(-1)^{p+q+r+u+v}\zeta_2(u,s,r,q,p,v;G_2)
\end{split}
\label{G2-FE}
\end{equation}
can be expressed in terms of the Riemann zeta-function. In particular, setting $(p,s,q,r,u,v)=(2a,2b-1,2b-1,2b-1,2a,2a)$ for $a,b\in \mathbb{N}$, we can compute \eqref{G2-FE} as 
$$2\zeta_2(2a,2b-1,2b-1,2b-1,2a,2a;G_2).$$
This is an example of the assertion in 
Section \ref{sec-6-3} corresponding to $\Delta=\Delta(G_2)$, $\Delta_I=\Delta(A_1)$ with $I=\{ 2\}$ and $W^I((-1),(1))=2$. 
For example, we can obtain from \cite[Theorem 6.1]{KMTWitten5} that 
\begin{align*}
2 \zeta_2(2,1,1,1,2,2;G_2)
  &  =-\frac{187}{972}\zeta(7)\pi^2+\frac{11149}{5832}\zeta(9),\\
2 \zeta_2(4,1,1,1,4,4;G_2)
  &  =-\frac{15337}{4723920}\zeta(11)\pi^4-\frac{157303}{2834352}\zeta(13)\pi^2\\
& \quad\ \   +\frac{14696765}{17006112}\zeta(15),\\
2 \zeta_2(2,3,3,3,2,2;G_2)
  &  =-\frac{16171}{3888}\zeta(13)\pi^2+\frac{957697}{23328}\zeta(15).
\end{align*}
\end{example}

\ 

\section{Proof of Theorem \ref{thm:main1}}\label{sec8}
Now we proceed to prove fundamental results stated in Section \ref{sec2}.
In this section we prove Theorem \ref{thm:main1}.
First we review a result proved in \cite{KMT2014}.

Denote the usual inner product on $\mathbb{R}^d$ by
$\langle\cdot,\cdot\rangle'$.
Regard $f=(\vec{f},\Dot{f})\in \mathbb{R}^d\times\mathbb{C}$ ($\vec{f}\in \mathbb{R}^d$, 
$\Dot{f}\in \mathbb{C}$)
as an affine linear functional on $\mathbb{R}^d$ by 
$f(\mathbf{u})=\langle\vec{f},\mathbf{u}\rangle'+\Dot{f}$ for $\mathbf{u}\in \mathbb{R}^d$.
For $X\subset \mathbb{R}^d$, denote by $\langle X\rangle$ the $\mathbb{Z}$-span of $X$.
For $Y\subset \mathbb{R}^d\times\mathbb{C}$, put
$\vec{Y}=\{\vec{f}\;|\;f=(\vec{f},\Dot{f})\in Y\}$.

Let $\Lambda$ be a finite subset of $(\mathbb{Z}^d\setminus\{0\})\times\mathbb{C}$
such that the rank of $\langle\vec{\Lambda}\rangle$ is $d$.   Denote by
$\widetilde{\Lambda}$ the set of all $f\in\Lambda$ such that the rank of 
$\langle\vec{\Lambda}\setminus\{\vec{f}\}\rangle$ is less than $d$.
For each $f\in\Lambda$ we associate a number $k_f\in\mathbb{N}$, and put
$\mathbf{k}=(k_f)_{f\in\Lambda}$.     For $k\in\mathbb{N}$, define
$\Lambda_k=\{f\in\Lambda\;|\;k_f=k\}$.    Then $\Lambda=\bigcup_{k\geq 1}\Lambda_k$.

Let $\mathbf{y}'\in \mathbb{R}^d$, and define
\begin{align}\label{rev-1}
S(\mathbf{k},\mathbf{y}';\Lambda)=\lim_{N\to\infty}\sum_
{\substack{\mathbf{u}=(u_1,\ldots,u_d)\in\mathbb{Z}^d \\ |u_j|\leq N\;(1\leq j\leq d)\\
f(\mathbf{u})\neq 0\;(f\in\Lambda)}}e^{2\pi\sqrt{-1}\langle\mathbf{y}',\mathbf{u}\rangle'}
\prod_{f\in\Lambda}\frac{1}{f(\mathbf{u})^{k_f}}.
\end{align}
Under the assumption $(\#)$ in the statement of Theorem \ref{thm:main1}, we see that
$\widetilde{\Lambda}\cap\Lambda_1=\emptyset$.    Therefore by \cite[Theorem 2.2]{KMT2014},
the right-hand side of \eqref{rev-1} converges 
(and defines 
$S(\mathbf{k},\mathbf{y}';\Lambda)$) for any $\mathbf{y}'\in \mathbb{R}^d$.

Let $\mathscr{B}(\Lambda)$ be the collection of all subsets 
$B=\{f_1,\ldots,f_d\}\subset\Lambda$ such that $\vec{B}$ forms a basis of $\mathbb{R}^d$.
We write the dual basis of $\vec{B}=\{\vec{f}_1,\ldots,\vec{f}_d\}$ as 
$\vec{B}^*=\{\vec{f}_1^B,\ldots,\vec{f}_d^B\}$.
For any $\mathbf{y}'\in \mathbb{R}^d$ and any 
$\mathbf{t}=(t_f)_{f\in\Lambda}\in\mathbb{C}^{|\Lambda|}$,
define
\begin{multline}
  \label{eq:exp_F-rev}
  F(\mathbf{t},\mathbf{y}';\Lambda)
  =
  \sum_{B\in\mathscr{B}(\Lambda)}
  \left(\prod_{g\in \Lambda\setminus B}
  \frac{t_g}
  {t_g-2\pi\sqrt{-1}\Dot{g}-\sum_{f\in B}
    (t_f-2\pi\sqrt{-1}\Dot{f})\langle \vec{g},\vec{f}^B\rangle'}\right)
  \\
  \times
  \frac{1}{\abs{\mathbb{Z}^d/\langle\vec{B}\rangle}}
  \sum_{\mathbf{w}\in \mathbb{Z}^d/\langle\vec{B}\rangle}
  \prod_{f\in B}
  \frac{t_f \exp((t_f-2\pi\sqrt{-1}\Dot{f})
    \{\mathbf{y}'+\mathbf{w}\}_{B,f}^{\prime})}{\exp(t_f-2\pi\sqrt{-1}\Dot{f})-1},
\end{multline}
where $\{\cdot\}_{B,f}^{\prime}$ is the multi-dimensional fractional part in the sense of
\cite[Section 2]{KMT2014} (using the standard inner product on $\mathbb{R}^d$, which is
written as $\langle\cdot,\cdot\rangle$ in \cite{KMT2014}, but in the present paper it is
written as $\langle\cdot,\cdot\rangle'$), 
and define $C(\mathbf{k},\mathbf{y}';\Lambda)$ by the expansion
\begin{equation}
\label{eq:def_F-rev}
  F(\mathbf{t},\mathbf{y}';\Lambda)=
  \sum_{\mathbf{k}\in \mathbb{N}_0^{\abs{\Lambda}}}C(\mathbf{k},\mathbf{y}';\Lambda)
  \prod_{f\in \Lambda}
  \frac{t_f^{k_f}}{k_f!}.
\end{equation}
Then \cite[Theorem 2.5]{KMT2014} asserts
\begin{align}\label{rev-2}
S(\mathbf{k},\mathbf{y}';\Lambda)=\left(\prod_{f\in\Lambda}\frac{-(2\pi\sqrt{-1})^{k_f}}
{k_f!}\right)C(\mathbf{k},\mathbf{y}';\Lambda)
\end{align}
for any $\mathbf{k}=(k_f)_{f\in\Lambda}\in\mathbb{N}^{|\Lambda|}$ and any
$\mathbf{y}'\in \mathbb{R}^d$.    
This is the key formula for the proof of Theorem \ref{thm:main1}.

\begin{proof}[Proof of Theorem \ref{thm:main1}]
For $\lambda\in P_{I+}$ and $\mu\in P_{I^c}$,
the relation
$\lambda+\mu\not\in H_{\Delta_+^{\vee}}=H_{\Delta_{I+}^\vee}\cup H_{\Delta^{*\vee}}$
is equivalent to
\begin{equation}
  \lambda\not\in H_{\Delta_{I+}^\vee},
\qquad\text{and}\qquad
  \lambda+\mu\not\in H_{\Delta^{*\vee}}.
\end{equation}
Hence for $\mathbf{s}\in\mathbb{C}^{|\Delta_+|}$ and $\mathbf{y}\in V$ we have
\begin{multline}\label{S-decomp}
    S(\mathbf{s},\mathbf{y};I;\Delta)
    \\
    \begin{aligned}
      &=
    \sum_{\lambda\in P_{I+}}
    \sum_{\substack{\mu\in P_{I^c}\\\lambda+\mu\not\in H_{\Delta_+^{\vee}}}}
    \biggl(
    \prod_{\alpha\in\Delta_{I+}}
    \frac{1}{\langle\alpha^\vee,\lambda+\mu\rangle^{s_\alpha}}
    \biggr)
    e^{2\pi\sqrt{-1}\langle \mathbf{y},\lambda\rangle}
    \biggl(
    e^{2\pi\sqrt{-1}\langle \mathbf{y},\mu\rangle}
    \prod_{\alpha\in \Delta^*} \frac{1}{\langle\alpha^\vee,\lambda+\mu\rangle^{k_\alpha}}\biggr)
    \\
    &=
    \sum_{\substack{\lambda\in P_{I++}}}
    \biggl(
     \prod_{\alpha\in\Delta_{I+}}
    \frac{1}{\langle\alpha^\vee,\lambda\rangle^{s_\alpha}}
    \biggr)
    e^{2\pi\sqrt{-1}\langle \mathbf{y},\lambda\rangle}
S_{\lambda}(\mathbf{k},\mathbf{y};I;\Delta),
  \end{aligned}
\end{multline}
where
\begin{align}\label{def_Sigma_lambda}
S_{\lambda}(\mathbf{k},\mathbf{y};I;\Delta)=
        \sum_{\substack{\mu\in P_{I^c}\\\lambda+\mu\not\in H_{\Delta^{*\vee}}}}
    \biggl(
    e^{2\pi\sqrt{-1}\langle \mathbf{y},\mu\rangle}
    \prod_{\alpha\in \Delta^*} \frac{1}{\langle\alpha^\vee,\lambda+\mu\rangle^{k_\alpha}}\biggr). 
\end{align}

We apply the aforementioned result of \cite{KMT2014} to find the generating function of 
$S_{\lambda}(\mathbf{k},\mathbf{y};I;\Delta)$.
For this purpose, we need to identify each symbol appearing here and in \cite{KMT2014}.
We identify $\mathbb{R}^{d}\simeq
\bigoplus_{i\in I^c}\mathbb{R}\alpha_i^\vee$
and
through this identification,
we define the projection $\pi_{I^c}:V\to \mathbb{R}^d$ 
by
\begin{equation}
  \pi_{I^c}(v)=(\langle v,\lambda_i\rangle)_{i\in I^c} \quad (v\in V).
\end{equation}
For $\alpha\in\Delta^*$,
we set the affine linear form 
\begin{equation}
  f_\alpha(\mu)=\langle\alpha^\vee,\mu\rangle+\langle\alpha^\vee,\lambda\rangle
\end{equation}
for $\mu\in\bigoplus_{i\in I^c}\mathbb{R}\lambda_i$.

First we note that
{\it $f_\alpha$ is naturally regarded as an affine linear form on $\mathbb{R}^d$ via 
$\mathbb{R}^d\simeq\bigoplus_{i\in I^c}\mathbb{R}\lambda_i$}.
In fact, for $\alpha^\vee=\sum_{i\in I^c}a_i\alpha_i^\vee$ and 
$\mu=\sum_{i\in I^c}b_i \lambda_i$, we have
\begin{equation}
\label{eq:amu}
  \begin{split}
    \langle\alpha^\vee,\mu\rangle&=
    \sum_{i\in I^c}a_ib_i=
    \langle(a_i)_{i\in I^c},(b_i)_{i\in I^c}\rangle'
    =
    \langle(\langle\alpha^\vee,\lambda_i\rangle)_{i\in I^c},(b_i)_{i\in I^c}\rangle'
    \\
    &=
    \langle\pi_{I^c}(\alpha^\vee),(b_i)_{i\in I^c}\rangle'.
  \end{split}
\end{equation}
Thus we may choose
\begin{equation}\label{choice_Lambda}
  \Lambda=\{f_\alpha=(\vec{f}_\alpha,\Dot{f}_\alpha)~|~\alpha\in\Delta^*\}
  \subset (\mathbb{Z}^d\setminus\{0\})\times\mathbb{Z},
\end{equation}
where $\vec{f}_\alpha=\pi_{I^c}(\alpha^\vee)$ and $\Dot{f}_\alpha=\langle\alpha^\vee,\lambda\rangle$.

Next, let $C\subset\Delta^*$ with $|C|=d$ and
$B=\{f_\alpha~|~\alpha\in C\}\subset\Lambda$.
Secondly we show that
$B\in\mathscr{B}(\Lambda)$ {\it if and only if} 
$\mathbf{V}=C\cup \Psi_I\in\mathscr{V}_I$.
Since 
the matrix of the pairings $\langle\beta^\vee,\lambda_i\rangle$ for $\beta\in C\cup \Psi_I$ and $1\leq i\leq r$ is written as the form
\begin{equation}
\label{eq:matbl}
  \begin{pmatrix}
    (\langle\beta^\vee,\lambda_i\rangle)_{\beta\in C,i\in I^c} & (\langle\beta^\vee,\lambda_i\rangle)_{\beta\in C,i\in I}
\\
(\langle\beta^\vee,\lambda_i\rangle)_{\beta\in\Psi_I,i\in I^c} & (\langle\beta^\vee,\lambda_i\rangle)_{\beta\in\Psi_I,i\in I}
  \end{pmatrix}
  =
\begin{pmatrix}
    A & * \\ 0 & \id_{|I|}
  \end{pmatrix},
\end{equation}
it is sufficient to show
that
that $A$ is regular if and only if
$B\in\mathscr{B}(\Lambda)$.
The definition of $\mathscr{B}(\Lambda)$ implies that
$B\in\mathscr{B}(\Lambda)$ if and only if
\begin{equation}
A'=(\langle\vec{f}_\alpha,e_i\rangle')_{\alpha\in C,i\in I^c}    
\end{equation}
is a regular matrix, where $\{e_i\}_{i\in I^c}$ is the standard basis of $\mathbb{R}^d$. 
Here
by \eqref{eq:amu}, we have
\begin{equation}
  \langle\vec{f}_\alpha,e_i\rangle'=\langle \pi_{I^c}(\alpha^{\vee}),e_i\rangle'=
  \langle\alpha^\vee,\lambda_i\rangle
\end{equation}
and so $A'$ is rewritten as
\begin{equation}
A'=(\langle\alpha^\vee,\lambda_i\rangle)_{\alpha\in C,i\in I^c}=A.
\end{equation}
Thus we showed the desired claim.
When these equivalent conditions hold, we write $C=\mathbf{V}_I$, which agrees with the notation in Section \ref{sec2}.
In this case
we have
\begin{equation}
  \begin{split}
    Q^\vee/L(\mathbf{V}^\vee)&=
\Bigl(\bigoplus_{i=1}^r\mathbb{Z}\alpha_i^\vee\Bigr)/
\Bigl(\bigoplus_{\beta\in \mathbf{V}_I}\mathbb{Z}\beta^\vee\oplus
\bigoplus_{i\in I}\mathbb{Z}\alpha_i^\vee\Bigr)
\\
&\simeq
\pi_{I^c}\Bigl(\bigoplus_{i\in I^c}\mathbb{Z}\alpha_i^\vee\Bigr)/
\pi_{I^c}\Bigl(\bigoplus_{\beta\in \mathbf{V}_I}\mathbb{Z}\beta^\vee\Bigr),
\end{split}
\end{equation}
and 
hence $\mathbb{Z}^d/\langle\vec{B}\rangle\simeq Q^\vee/L(\mathbf{V}^\vee)$. 

Thirdly we show that 
$\langle\vec{f}_\gamma,\vec{f}_\alpha^B\rangle'=\langle \gamma^\vee,\mu^{\mathbf{V}}_\alpha\rangle$ for
 $\gamma\in\Delta^*$ and $\alpha\in \mathbf{V}_I$
(and hence $B=\{f_\alpha~|~\alpha\in \mathbf{V}_I\}\in\mathscr{B}(\Lambda)$
and $\mathbf{V}=\mathbf{V}_I\cup \Psi_I$).
It is sufficient to show that for any $v\in V$,
\begin{equation}\label{inner-product-identity}
  \langle \pi_{I^c}(v),\vec{f}_\alpha^B\rangle'=\langle v,\mu^{\mathbf{V}}_\alpha\rangle
\end{equation}
holds.
  Write
  \begin{equation}
    v=\sum_{\beta\in\Psi_I}b_\beta\beta^\vee+\sum_{\beta\in \mathbf{V}_I}c_\beta\beta^\vee.
  \end{equation}
  Then
  \begin{equation}
    \pi_{I^c}(v)=\sum_{\beta\in \mathbf{V}_I}c_\beta\pi_{I^c}(\beta^\vee).
  \end{equation}
  Since
  $\langle\pi_{I^c}(\beta^\vee),\vec{f}^B_\alpha\rangle'=\langle\vec{f}_\beta,\vec{f}^B_\alpha\rangle'=\delta_{\beta\alpha}$ for $\beta\in \mathbf{V}_I$,
  we have
  \begin{equation}
      \langle \pi_{I^c}(v),\vec{f}_\alpha^B\rangle'
      =
      \sum_{\beta\in \mathbf{V}_I}c_\beta\langle\pi_{I^c}(\beta^\vee),\vec{f}_\alpha^B\rangle'
      =c_\alpha
      =\langle v,\mu^{\mathbf{V}}_\alpha\rangle.
  \end{equation}
Therefore
$\langle\vec{f}_\gamma,\vec{f}_\alpha^B\rangle'=\langle \pi_{I^c}(\gamma^\vee),\vec{f}_\alpha^B\rangle'=\langle \gamma^\vee,\mu^{\mathbf{V}}_\alpha\rangle$.

From the above arguments, we see that \eqref{eq:exp_F-rev} under the choice
\eqref{choice_Lambda} now reads as
\begin{equation}
  \begin{split}
    &F(\mathbf{t},\mathbf{y}';\Lambda)\\
    &=
    \sum_{\mathbf{V}\in\mathscr{V}_I}
    \left(\prod_{\gamma\in \Delta^*\setminus\mathbf{V}_I}
    \frac{t_\gamma}
    {t_\gamma
      -2\pi\sqrt{-1}
      \langle \gamma^\vee,\lambda\rangle
      -\sum_{\beta\in\mathbf{V}_I}
      (t_\beta-2\pi\sqrt{-1}
      \langle\beta^\vee,\lambda\rangle)\langle\gamma^\vee,\mu^{\mathbf{V}}_\beta\rangle
    }\right)
    \\
    &\times
    \frac{1}{\abs{Q^\vee/L(\mathbf{V}^\vee)}}
    \sum_{q\in Q^\vee/L(\mathbf{V}^\vee)}
    \prod_{\beta\in\mathbf{V}_I}
    \frac{t_\beta\exp
      ((t_\beta-2\pi\sqrt{-1}
      \langle \beta^\vee,\lambda\rangle)
      \{\mathbf{y}'+q\}^{\prime}_{B,f})}{\exp(t_\beta-2\pi\sqrt{-1}
        \langle \beta^\vee,\lambda\rangle)-1}.
  \end{split}
\end{equation}
In fact, there are the following one-to-one correspondences:
\begin{align*}
B\in\mathscr{B}(\Lambda) &\longleftrightarrow \mathbf{V}\in \mathscr{V}_I,\\
g\in \Lambda\setminus B &\longleftrightarrow \gamma\in \Delta^*\setminus\mathbf{V}_I,\\
f\in B &\longleftrightarrow \beta\in\mathbf{V}_I, \\
\mathbf{w}\in \mathbb{Z}^d/\langle\vec{B}\rangle &\longleftrightarrow 
q\in Q^{\vee}/L(\mathbf{V}^{\vee}), 
\end{align*}
hence $\langle\vec{g},\vec{f}^B\rangle'$ corresponds to
$\langle \vec{f}_{\gamma},\vec{f}_{\beta}^B\rangle'$, the latter being equal to
$\langle \gamma^{\vee}, \mu_{\beta}^{\mathbf{V}}\rangle$ as we have already seen.

From \eqref{eq:amu} we see that
\begin{align}\label{eq:amu2}
\langle\mathbf{y},\mu\rangle=\langle\pi_{I^c}(\mathbf{y}),(b_i)_{i\in I^c}\rangle'
\end{align}
for any $\mathbf{y}\in V$.    Therefore comparing \eqref{rev-1} 
(with $\mathbf{y}'=\pi_{I^c}(\mathbf{y})$) and 
\eqref{def_Sigma_lambda} we find that
\begin{align}\label{S_lamba=S}
S_{\lambda}(\mathbf{k},\mathbf{y};I;\Delta)=S(\mathbf{k},\pi_{I^c}(\mathbf{y});\Lambda),
\end{align}
and also
$\{\pi_{I^c}(\mathbf{y})+q\}_{B,f}^{\prime}=\{\mathbf{y}+q\}_{\mathbf{V},\beta}.$
Moreover
\begin{multline}
  t_\gamma
  -2\pi\sqrt{-1}
  \langle \gamma^\vee,\lambda\rangle
  -\sum_{\beta\in\mathbf{V}_I}
  (t_\beta-2\pi\sqrt{-1}
  \langle\beta^\vee,\lambda\rangle)\langle\gamma^\vee,\mu^{\mathbf{V}}_\beta\rangle
  \\
  \begin{aligned}
    &=
    t_\gamma
    -\sum_{\beta\in\mathbf{V}_I}
    t_\beta\langle\gamma^\vee,\mu^{\mathbf{V}}_\beta\rangle
    -2\pi\sqrt{-1}\Bigl(
    \langle \gamma^\vee,\lambda\rangle
    -\sum_{\beta\in\mathbf{V}_I}
    \langle\beta^\vee,\lambda\rangle\langle\gamma^\vee,\mu^{\mathbf{V}}_\beta\rangle\Bigr)
    \\
    &=
    t_\gamma
    -\sum_{\beta\in\mathbf{V}_I}
    t_\beta\langle\gamma^\vee,\mu^{\mathbf{V}}_\beta\rangle
    -2\pi\sqrt{-1}
    \langle \gamma^\vee,p_{\mathbf{V}_I^\perp}(\lambda)\rangle
  \end{aligned}
\end{multline}
by \eqref{eq:proj}, and
\begin{multline}
    \exp\Bigl(-2\pi\sqrt{-1}\sum_{\beta\in\mathbf{V}_I}
    \langle \beta^\vee,\lambda\rangle
    \{\mathbf{y}+q\}_{\mathbf{V},\beta}\Bigr)
\\
\begin{aligned}
  &=
  \exp\Bigl(-2\pi\sqrt{-1}\sum_{\beta\in\mathbf{V}_I}
  \langle \beta^\vee,\lambda\rangle
    \langle\mathbf{y}+q,\mu_\beta^{\mathbf{V}}\rangle\Bigr)
    \\
    &=
    \exp(-2\pi\sqrt{-1}\langle\mathbf{y}+q,\lambda\rangle)
    \exp\Bigl(2\pi\sqrt{-1}
    \langle\mathbf{y}+q,p_{\mathbf{V}_I^\perp}(\lambda)\rangle\Bigr)
    \\
    &=
    \exp(-2\pi\sqrt{-1}\langle\mathbf{y},\lambda\rangle)
    \exp\Bigl(2\pi\sqrt{-1}
    \langle\mathbf{y}+q,p_{\mathbf{V}_I^\perp}(\lambda)\rangle\Bigr)
  \end{aligned}
\end{multline}
by \eqref{eq:proj} again and the facts
$\langle\beta^\vee,\lambda\rangle, \langle q,\lambda\rangle\in\mathbb{Z}$. 
Collecting these facts, we obtain
$$
F(\mathbf{t},\pi_{I^c}(\mathbf{y});\lambda)=
\exp(-2\pi\sqrt{-1}\langle\mathbf{y},\lambda\rangle)F(\mathbf{t}_I,\mathbf{y},
\lambda;I;\Delta)
$$
and hence
$$
C(\mathbf{k},\pi_{I^c}(\mathbf{y});\lambda)=
\exp(-2\pi\sqrt{-1}\langle\mathbf{y},\lambda\rangle)P(\mathbf{k},\mathbf{y},
\lambda;I;\Delta).
$$
Therefore by \eqref{rev-2} and \eqref{S_lamba=S} we have
\begin{align*}
S_{\lambda}(\mathbf{k},\mathbf{y};I;\Delta)
&=\left(\prod_{\alpha\in\Delta^*}\frac{-(2\pi\sqrt{-1})^{k_{\alpha}}}{k_{\alpha}!}
\right)\\
&\times\exp(-2\pi\sqrt{-1}\langle\mathbf{y},\lambda\rangle)P(\mathbf{k},\mathbf{y},
\lambda;I;\Delta).
\end{align*}
Substituting this into \eqref{S-decomp}, we arrive at the assertion of the theorem.
\end{proof}

\section{Proof of Theorem \ref{thm:main2}}\label{sec9}

In this final section we prove Theorem \ref{thm:main2}.
In what follows, let $\Lh{A}$ denote the $\mathbb{R}$-span of $A$.
Let $n=|\Delta_{I+}|$.
  Fix an order
  $\Delta_{I+}=\{\beta_1,\beta_2,\ldots,\beta_{n}\}$ and
  put $A_j=\{\beta_1,\ldots,\beta_{j}\}$ for $0\leq j\leq n$.
  Note that
  \begin{equation}
    \emptyset=A_0\subset A_1\subset\cdots \subset A_{n}=\Delta_{I+}.
  \end{equation}

  Fix $\mathbf{V}\in\mathscr{V}$ and $q\in Q^\vee/L(\mathbf{V}^\vee)$.
Consider the following condition for $j$:
  \begin{equation}
    \label{eq:condj}
    \beta_k\in\mathbf{V}\cup\Lh{A_{k-1}}\text{ for all $k\leq j$}.
  \end{equation}
Under this condition it follows that, 
in the sequence of nondecreasing linear spaces
  \begin{equation}\label{seq_A}
    \{0\}=[A_0]\subset [A_1]\subset\cdots \subset [A_{n}]=[\Delta_{I+}],
  \end{equation}
if $[A_{k-1}]\subsetneq[A_k]$, then $\beta_k\notin\Lh{A_{k-1}}$ and 
hence $\beta_k\in\mathbf{V}$.

  Let $\mathbf{t}_j=(t_{\gamma})_{\gamma\in \Delta_+\setminus A_j}$
  (hence $\mathbf{t}_0=\mathbf{t}$), 
  $\mathbf{t}_{j,\mathbf{V}}=(t_{\beta})_{\beta\in\mathbf{V}\setminus\Lh{A_j}}$, 
  and
$$
S_{j,\gamma}(t_{\gamma},\mathbf{t}_{j,\mathbf{V}})=t_\gamma-\sum_{\beta\in\mathbf{V}\setminus\Lh{A_j}}
        t_\beta\langle\gamma^\vee,\mu^{\mathbf{V}}_\beta\rangle
        -2\pi\sqrt{-1}
        \sum_{\beta\in\mathbf{V}\cap\Lh{A_j}}\langle\beta^\vee,\lambda\rangle
        \langle\gamma^\vee,\mu^{\mathbf{V}}_\beta\rangle.
$$
 For $j$ satisfying \eqref{eq:condj}, define
  \begin{equation}
    \label{eq:fvqj}
    \begin{split}
      &f_{j}(\mathbf{t}_j;\mathbf{V},q)\\
      &=
      \left(\prod_{\gamma\in \Delta_+\setminus(\mathbf{V}\cup \Lh{A_j})}
      \frac{t_\gamma}
      {S_{j,\gamma}(t_{\gamma},\mathbf{t}_{j,\mathbf{V}})}\right)
      \times
      \left(\prod_{\gamma\in (\Delta_+\cap \Lh{A_j})\setminus(\mathbf{V}\cup A_j)}
      \frac{t_\gamma}
      {t_\gamma-2\pi\sqrt{-1}\langle\gamma^\vee,\lambda\rangle}\right)
      \\
      &\times
      \exp\Bigl(
      2\pi\sqrt{-1}
      \sum_{\beta\in\mathbf{V}\cap\Lh{A_j}}\langle\beta^\vee,\lambda\rangle
      \langle\mathbf{y}+q,\mu^{\mathbf{V}}_\beta\rangle\Bigr)
      \times\left(\prod_{\gamma\in\mathbf{V}\setminus\Lh{A_j}}
      \frac{t_\gamma\exp
        (t_\gamma\{\mathbf{y}+q\}_{\mathbf{V},\gamma})}{e^{t_\gamma}-1}\right)\\
      &=f_{j1}\times f_{j2}\times f_{j3}\times f_{j4},
    \end{split}
  \end{equation}
  say.    If $j$ does not satisfy \eqref{eq:condj}, we let 
  $f_{j}(\mathbf{t}_j;\mathbf{V},q)=0$.    Then
  \begin{equation}\label{F_f0}
    F(\mathbf{t},\mathbf{y};\Delta)
    =
    \sum_{\mathbf{V}\in\mathscr{V}}
    \frac{1}{\abs{Q^\vee/L(\mathbf{V}^\vee)}}
    \sum_{q\in Q^\vee/L(\mathbf{V}^\vee)}
    f_{0}(\mathbf{t};\mathbf{V},q).
  \end{equation}
  
\begin{lemma}\label{residue}
We have
  \begin{equation}
    \label{eq:rec}
    f_{j}(\mathbf{t}_j;\mathbf{V},q)
    =
    \Res_{t_{\beta_j}=2\pi\sqrt{-1}\langle\beta_j^\vee,\lambda\rangle}
    \frac{f_{j-1}(\mathbf{t}_{j-1};\mathbf{V},q)}{t_{\beta_j}},
  \end{equation}
  where the limit procedure of calculating the residue is along some `generic' path
  in the sense of Remark \ref{def_residue}.
\end{lemma}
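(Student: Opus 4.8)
The plan is to prove the recursion \eqref{eq:rec} by fixing $\mathbf{V}\in\mathscr{V}$ and $q\in Q^\vee/L(\mathbf{V}^\vee)$, locating precisely which of the four factors $f_{j-1,1},f_{j-1,2},f_{j-1,3},f_{j-1,4}$ of $f_{j-1}$ in \eqref{eq:fvqj} actually contain the variable $t_{\beta_j}$, and then evaluating the one-variable residue $\Res_{t_{\beta_j}=2\pi\sqrt{-1}\langle\beta_j^\vee,\lambda\rangle}(f_{j-1}/t_{\beta_j})$ by matching factors against $f_j$. The natural case division is according to the position of $\beta_j$ relative to $\mathbf{V}$ and $\Lh{A_{j-1}}$, since the defining index sets of the $f_{*,k}$ involve only $\mathbf{V}$, $A_{j-1}$ (resp.\ $A_j$) and $\Lh{A_{j-1}}$ (resp.\ $\Lh{A_j}$). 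Throughout I use that $\langle\beta_j^\vee,\lambda\rangle\in\mathbb{Z}$, since $\beta_j\in\Delta_{I+}$ and $\lambda\in P_I$, so that $e^{2\pi\sqrt{-1}\langle\beta_j^\vee,\lambda\rangle}=1$.

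Before the case analysis I would record the algebraic backbone implicit in the paragraph preceding \eqref{eq:condj}: under \eqref{eq:condj} the set $\mathbf{V}\cap\Lh{A_j}$ equals $\{\beta_k:k\le j,\ \Lh{A_{k-1}}\subsetneq\Lh{A_k}\}$, a subset of $\mathbf{V}$ forming a basis of $\Lh{A_j}$. Consequently, for any $\gamma\in\Delta_+\cap\Lh{A_j}$ the coroot $\gamma^\vee$ lies in the span of $\{\beta^\vee:\beta\in\mathbf{V}\cap\Lh{A_j}\}$, whence $\langle\gamma^\vee,\mu^{\mathbf{V}}_\beta\rangle=0$ for $\beta\in\mathbf{V}\setminus\Lh{A_j}$ and $\langle\gamma^\vee,\lambda\rangle=\sum_{\beta\in\mathbf{V}\cap\Lh{A_j}}\langle\gamma^\vee,\mu^{\mathbf{V}}_\beta\rangle\langle\beta^\vee,\lambda\rangle$. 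Substituting these two facts into the definition of $S_{j,\gamma}$ collapses it to $S_{j,\gamma}=t_\gamma-2\pi\sqrt{-1}\langle\gamma^\vee,\lambda\rangle$ for $\gamma\in\Lh{A_j}$. This is the identity that converts an $f_{*,1}$-type factor into an $f_{*,2}$-type factor, and it is the engine of the whole computation. I would also note the bookkeeping $\mathbf{V}\cap\Lh{A_j}=(\mathbf{V}\cap\Lh{A_{j-1}})\cup(\{\beta_j\}\cap\mathbf{V})$ (using that $\mathbf{V}$ is linearly independent), which controls how the index sets change from step $j-1$ to step $j$.

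Then I carry out the cases. If $j-1$ fails \eqref{eq:condj} then so does $j$, and both sides vanish. Assume $j-1$ satisfies \eqref{eq:condj}. \emph{Case $\beta_j\in\Lh{A_{j-1}}$} (so $\Lh{A_j}=\Lh{A_{j-1}}$ and $\beta_j\notin\mathbf{V}$): here $t_{\beta_j}$ occurs only in the single $f_{j-1,2}$-factor $t_{\beta_j}/(t_{\beta_j}-2\pi\sqrt{-1}\langle\beta_j^\vee,\lambda\rangle)$; dividing by $t_{\beta_j}$ leaves a simple pole at the residue point, and the residue simply deletes that factor, which is exactly the change $f_{j-1,2}\to f_{j,2}$ (the index loses $\beta_j$), while $f_{*,1},f_{*,3},f_{*,4}$ are unchanged. \emph{Case $\beta_j\notin\Lh{A_{j-1}}$ but $\beta_j\in\mathbf{V}$} (so $\Lh{A_j}\supsetneq\Lh{A_{j-1}}$): this is the substantive case. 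Now $t_{\beta_j}$ appears both in the $f_{j-1,4}$ Bernoulli factor $t_{\beta_j}e^{t_{\beta_j}\{\mathbf{y}+q\}_{\mathbf{V},\beta_j}}/(e^{t_{\beta_j}}-1)$, which after division by $t_{\beta_j}$ supplies a simple pole at $t_{\beta_j}=2\pi\sqrt{-1}\langle\beta_j^\vee,\lambda\rangle$, and linearly inside every $S_{j-1,\gamma}$. I would compute the residue as the residue of the Bernoulli factor times the value of everything else at $t_{\beta_j}=2\pi\sqrt{-1}\langle\beta_j^\vee,\lambda\rangle$, using: (i) $\Res\, e^{t\{\cdot\}}/(e^t-1)=\exp(2\pi\sqrt{-1}\langle\beta_j^\vee,\lambda\rangle\langle\mathbf{y}+q,\mu^{\mathbf{V}}_{\beta_j}\rangle)$, where $\langle\beta_j^\vee,\lambda\rangle\in\mathbb{Z}$ and $\{x\}\equiv x\pmod 1$ are used, this being exactly the new factor joining $f_{j-1,3}$ to form $f_{j,3}$; (ii) the substitution sends $S_{j-1,\gamma}\mapsto S_{j,\gamma}$; (iii) the $f_{j-1,1}$ index $\Delta_+\setminus(\mathbf{V}\cup\Lh{A_{j-1}})$ splits into the part outside $\Lh{A_j}$, giving $f_{j,1}$, and the part inside $\Lh{A_j}$, which by the identity of the previous paragraph produces precisely the new $f_{j,2}$-factors; and (iv) deleting $\beta_j$ from $f_{j-1,4}$ yields $f_{j,4}$. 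Assembling, the residue equals $f_j$. \emph{Case $\beta_j\notin\mathbf{V}\cup\Lh{A_{j-1}}$} (so $j$ fails \eqref{eq:condj} and $f_j=0$): here $t_{\beta_j}$ occurs only in the single $f_{j-1,1}$-factor $t_{\beta_j}/S_{j-1,\beta_j}$ with $S_{j-1,\beta_j}=t_{\beta_j}-c$, $c$ independent of $t_{\beta_j}$ and generically not equal to $2\pi\sqrt{-1}\langle\beta_j^\vee,\lambda\rangle$; after dividing by $t_{\beta_j}$ the integrand is holomorphic at the residue point, so the residue is $0=f_j$.

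The main obstacle is the bookkeeping in the substantive case: one must verify that the roots $\gamma\in\Delta_+\cap(\Lh{A_j}\setminus\Lh{A_{j-1}})$ leaving the $f_{j-1,1}$ product land exactly as the new $f_{j,2}$-factors with the correct denominator. This rests on the collapse $S_{j,\gamma}=t_\gamma-2\pi\sqrt{-1}\langle\gamma^\vee,\lambda\rangle$ for $\gamma\in\Lh{A_j}$ and on checking that no such $\gamma$ lies in $A_j$ (which holds because $\gamma\notin\mathbf{V}$ and $\gamma\notin\Lh{A_{j-1}}$ force $\gamma\ne\beta_k$ for all $k\le j$ here). A secondary point, needed in every case, is that the residue is taken at a point off all \emph{other} singular hyperplanes of the integrand; this is guaranteed by the generic-path prescription of Remark \ref{def_residue}, and it is precisely what makes the last (vanishing) case work and what legitimizes evaluating the holomorphic factors by direct substitution.
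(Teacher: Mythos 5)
Your proposal reproduces the paper's own argument essentially verbatim in structure: your ``backbone'' is the paper's pair of preparatory facts (the dichotomy that under \eqref{eq:condj} either $\beta_j\in\mathbf{V}\setminus\Lh{A_{j-1}}$ or $\beta_j\in\Lh{A_{j-1}}\setminus\mathbf{V}$, together with $\Lh{A_j}=\Lh{\mathbf{V}\cap\Lh{A_j}}$), and your case division and factor-matching computations --- residue of the Bernoulli factor producing the new exponential in $f_{j,3}$, the substitution sending $S_{j-1,\gamma}$ to $S_{j,\gamma}$, the collapse $S_{j,\gamma}=t_\gamma-2\pi\sqrt{-1}\langle\gamma^\vee,\lambda\rangle$ for $\gamma\in\Lh{A_j}$ turning the departing $f_{j-1,1}$-factors into the new $f_{j,2}$-factors, and the index-set bookkeeping including the check that no such $\gamma$ lies in $A_j$ --- are exactly the paper's Cases 1-1, 1-2 and 2. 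Those parts are correct.

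There is, however, one genuine gap, in the vanishing case $\beta_j\notin\mathbf{V}\cup\Lh{A_{j-1}}$. You write $S_{j-1,\beta_j}=t_{\beta_j}-c$ with $c$ ``generically not equal to $2\pi\sqrt{-1}\langle\beta_j^\vee,\lambda\rangle$'' and credit the generic-path prescription of Remark \ref{def_residue} for the vanishing of the residue. Genericity alone cannot deliver this: it helps only if $c$ genuinely depends on at least one remaining variable, i.e.\ only if $\langle\beta_j^\vee,\mu^{\mathbf{V}}_\beta\rangle\neq0$ for some $\beta\in\mathbf{V}\setminus\Lh{A_{j-1}}$. If instead all these pairings vanished, then $\beta_j^\vee$ would lie in the span of $\{\beta^\vee:\beta\in\mathbf{V}\cap\Lh{A_{j-1}}\}$, and your own collapse identity would force $c$ to be the \emph{constant} $2\pi\sqrt{-1}\langle\beta_j^\vee,\lambda\rangle$; the pole would then sit at the residue point for every value of the remaining variables, no path could avoid it, and the residue would equal $1$ (exactly as in your first case), not $0$. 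So the case hinges on the linear-algebra step that the paper makes explicitly: if $\langle\beta_j^\vee,\mu^{\mathbf{V}}_\beta\rangle=0$ for all $\beta\in\mathbf{V}\setminus\Lh{A_{j-1}}$, then $\beta_j\in\Lh{\mathbf{V}\cap\Lh{A_{j-1}}}\subset\Lh{A_{j-1}}$ (using that $\mathbf{V}\cap\Lh{A_{j-1}}$ spans $\Lh{A_{j-1}}$, your basis fact at level $j-1$), contradicting the case hypothesis; hence some coefficient is nonzero, the singular hyperplane $S_{j-1,\beta_j}=0$ is not the coordinate hyperplane $t_{\beta_j}=2\pi\sqrt{-1}\langle\beta_j^\vee,\lambda\rangle$, and only then does genericity give residue $0$. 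The repair is a two-line consequence of facts you already stated, but as written the crucial case of the lemma is unjustified, and the reason you offer for it is not the correct one.
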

\begin{proof}
We begin with two preparatory statements in relation to \eqref{eq:condj}.
  First we show that under the condition \eqref{eq:condj}, 
  \begin{equation}\label{statement1}
    \text{either
      $\beta_j\in\mathbf{V}\setminus\Lh{A_{j-1}}$ or
      $\beta_j\in\Lh{A_{j-1}}\setminus\mathbf{V}$ }
  \end{equation}
  holds.  Under the
  condition \eqref{eq:condj} we have $\beta_j\in \mathbf{V}\cup\Lh{A_{j-1}}$.
  Assume $\beta_j\in\mathbf{V}\cap\Lh{A_{j-1}}$.  Then there exists
  $k\leq j-1$ such that $\beta_j\in\Lh{A_{k}}\setminus\Lh{A_{k-1}}$.
  Since $\Lh{A_{k}}\setminus\Lh{A_{k-1}}\neq\emptyset$, we have 
  $\beta_k\in\mathbf{V}$ by the argument just below \eqref{seq_A}.
  Then
  $\beta_j, \beta_k\in (\mathbf{V}\cap \Lh{A_k})\setminus
  (\mathbf{V}\cap \Lh{A_{k-1}})$ and
  $|\mathbf{V}\cap \Lh{A_k}|-|\mathbf{V}\cap \Lh{A_{k-1}}|\geq 2$.
  Since the elements of $\mathbf{V}$ are linearly independent, this
  contradicts to $\dim \Lh{A_{k}}-\dim\Lh{A_{k-1}}\leq 1$.
  Therefore \eqref{statement1} holds.

  Next we show that under the condition \eqref{eq:condj},
  \begin{equation}
    \label{eq:VAj}
    \Lh{A_j}=\Lh{\mathbf{V}\cap[A_j]}
  \end{equation}
  holds.  Since $\Lh{A_j}\supset\Lh{\mathbf{V}\cap[A_j]}$ is trivial,
  we show the opposite inclusion. If $\Lh{A_{k-1}}\subsetneq\Lh{A_{k}}$,
  then $\beta_k\in\mathbf{V}\cap[A_k]$ (again by the argument just below \eqref{seq_A}). 
  Hence
  $\dim[A_j]=|\mathbf{V}\cap[A_j]|$, which implies \eqref{eq:VAj}.
  
Now we start to prove \eqref{eq:rec}.

{\it Case 1}.   The case when $j$ satisfies \eqref{eq:condj}. 
Then by \eqref{statement1} either
      $\beta_j\in\mathbf{V}\setminus\Lh{A_{j-1}}$ or
      $\beta_j\in\Lh{A_{j-1}}\setminus\mathbf{V}$ holds.   
      
   {\it Case 1-1}.     Assume
  $\beta_j\in\mathbf{V}\setminus\Lh{A_{j-1}}$.  
  Then
  $\Lh{A_{j-1}}\subsetneq\Lh{A_j}$ and
  we see that, in the expression \eqref{eq:fvqj} of 
  $f_{j-1}(\mathbf{t}_{j-1};\mathbf{V},q)$, 
  $\beta_j$ appears only in $f_{j-1,1}$ and $f_{j-1,4}$. 
  When $t_{\beta_j}=2\pi\sqrt{-1}\langle\beta_j^\vee,\lambda\rangle$, the singularity
  appears only on the factor corresponding to $\gamma=\beta_j$ in $f_{j-1,4}$, 
  whose contribution to the residue \eqref{eq:rec} is
  \begin{align}
& \Res_{t_{\beta_j}=2\pi\sqrt{-1}\langle\beta_j^\vee,\lambda\rangle}
        \frac{1}{t_{\beta_j}}  \frac{t_{\beta_j}\exp
        (t_{\beta_j}\{\mathbf{y}+q\}_{\mathbf{V},\beta_j})}{e^{t_{\beta_j}}-1}\\ &\qquad\qquad=\exp(2\pi\sqrt{-1}\langle\beta_j^\vee,\lambda\rangle\{\mathbf{y}+q\}_{\mathbf{V},\beta_j})\notag  \\     
&\qquad\qquad=\exp(2\pi\sqrt{-1}\langle\beta_j^\vee,\lambda\rangle\langle\mathbf{y}+q,\mu^{\mathbf{V}}_{\beta_j}\rangle)\notag
  \end{align}
because $\langle\beta_j^\vee,\lambda\rangle\in\mathbb{Z}$, which becomes a member of 
$f_{j3}$.   Therefore the contribution of $f_{j-1,3}f_{j-1,4}$ to the residue
\eqref{eq:rec} is
\begin{align}\label{f3f4}
     & =\exp\Bigl(
      2\pi\sqrt{-1}
      \sum_{\beta\in\mathbf{V}\cap\Lh{A_{j-1}}}\langle\beta^\vee,\lambda\rangle
      \langle\mathbf{y}+q,\mu^{\mathbf{V}}_\beta\rangle\Bigr)\\
     &\times \exp(2\pi\sqrt{-1}\langle\beta_j^\vee,\lambda\rangle\langle\mathbf{y}+q,\mu^{\mathbf{V}}_{\beta_j}\rangle) 
      \prod_{\gamma\in\mathbf{V}\setminus(\Lh{A_{j-1}}\cup\{\beta_j\})}
      \frac{t_\gamma\exp
        (t_\gamma\{\mathbf{y}+q\}_{\mathbf{V},\gamma})}{e^{t_\gamma}-1}\notag\\
 &=\exp\Bigl(
      2\pi\sqrt{-1}
      \sum_{\beta\in\mathbf{V}\cap\Lh{A_j}}\langle\beta^\vee,\lambda\rangle
      \langle\mathbf{y}+q,\mu^{\mathbf{V}}_\beta\rangle\Bigr)
     \prod_{\gamma\in\mathbf{V}\setminus\Lh{A_j}}
      \frac{t_\gamma\exp
        (t_\gamma\{\mathbf{y}+q\}_{\mathbf{V},\gamma})}{e^{t_\gamma}-1}\notag\\
&=f_{j3}f_{j4}.\notag
\end{align}

Next consider the denominator of $f_{j-1,1}$ for 
  $\gamma\in(\Delta_+\cap\Lh{A_j})\setminus(\mathbf{V}\cup
  \Lh{A_{j-1}})$ at
  $t_{\beta_j}=2\pi\sqrt{-1}\langle\beta_j^\vee,\lambda\rangle$.
It is
  \begin{align}
    & S_{j-1,\gamma}(t_{\gamma},\mathbf{t}_{j-1,\mathbf{V}})\Biggr|_{t_{\beta_j}=2\pi\sqrt{-1}\langle\beta_j^\vee,\lambda\rangle}\\
    &=t_\gamma-\Biggl.\sum_{\beta\in\mathbf{V}\setminus\Lh{A_{j-1}}}
    t_\beta\langle\gamma^\vee,\mu^{\mathbf{V}}_\beta\rangle
    \Biggr|_{t_{\beta_j}=2\pi\sqrt{-1}\langle\beta_j^\vee,\lambda\rangle}\notag\\
    &\qquad\qquad\qquad-2\pi\sqrt{-1}
    \sum_{\beta\in\mathbf{V}\cap\Lh{A_{j-1}}}\langle\beta^\vee,\lambda\rangle
    \langle\gamma^\vee,\mu^{\mathbf{V}}_\beta\rangle
    \notag\\
      &=
      t_\gamma-\sum_{\beta\in\mathbf{V}\setminus\Lh{A_{j}}}
      t_\beta\langle\gamma^\vee,\mu^{\mathbf{V}}_\beta\rangle
      -2\pi\sqrt{-1}
      \sum_{\beta\in\mathbf{V}\cap\Lh{A_{j}}}\langle\beta^\vee,\lambda\rangle
      \langle\gamma^\vee,\mu^{\mathbf{V}}_\beta\rangle,\notag
\end{align}
which is further
\begin{align}
      =t_\gamma-2\pi\sqrt{-1}\langle\gamma^\vee,\lambda\rangle
  \end{align}
  because by \eqref{eq:VAj}, $\gamma^\vee$ is a linear combination of
  $\mathbf{V}^\vee\cap\Lh{A_{j}}$, so 
  $ \langle\gamma^\vee,\mu^{\mathbf{V}}_\beta\rangle=0$ for
  $\beta\in\mathbf{V}\setminus\Lh{A_{j}}$ and
  by writing $\gamma^\vee=\sum_{\beta\in\mathbf{V}\cap\Lh{A_{j}}} a_\beta \beta^\vee$, we have
  \begin{equation*}
    \sum_{\beta\in\mathbf{V}\cap\Lh{A_{j}}}\langle\beta^\vee,\lambda\rangle
    \langle\gamma^\vee,\mu^{\mathbf{V}}_\beta\rangle=\langle\gamma^\vee,\lambda\rangle.
  \end{equation*}
  Thus each corresponding member of $f_{j-1,1}$ becomes a member of $f_{j2}$.
   Therefore
\begin{align}\label{1-2-factor}
&\biggl.f_{j-1,1}f_{j-1,2}\biggr|_
{t_{\beta_j}=2\pi\sqrt{-1}\langle\beta_j^\vee,\lambda\rangle}\\
&=\prod_{\gamma\in (\Delta_+\setminus\Lh{A_j}) \setminus(\mathbf{V}\cup \Lh{A_{j-1}})}
      \frac{t_\gamma}
      {S_{j-1,\gamma}(t_{\gamma},\mathbf{t}_{j-1,\mathbf{V}})}
      \times
      \prod_{\gamma\in \mathcal{A}_{j-1}}
      \frac{t_\gamma}
      {t_\gamma-2\pi\sqrt{-1}\langle\gamma^\vee,\lambda\rangle},\notag
\end{align}
where
$$
\mathcal{A}_{j-1}=\big((\Delta_+\cap\Lh{A_j}) \setminus(\mathbf{V}\cup \Lh{A_{j-1}})\big)
   \cup \big((\Delta_+\cap \Lh{A_{j-1}})\setminus(\mathbf{V}\cup A_{j-1})\big).
$$
We note that 
\begin{align}\label{A_j-1}
\mathcal{A}_{j-1}
    =
    (\Delta_+\cap[A_j])\setminus(\mathbf{V}\cup A_{j-1})
\end{align}
holds.    In fact, it is obvious that $\mathcal{A}_{j-1}$ is included in the right-hand
side, while an element of the right-hand side does not belong to 
$\Delta_+\cap\Lh{A_{j-1}}$, it does not belong to $\mathbf{V}\cup\Lh{A_{j-1}}$.
Therefore \eqref{A_j-1} holds, and noting
$\mathbf{V}\cup A_{j-1}=\mathbf{V}\cup A_{j}$, we further find that
$\mathcal{A}_{j-1}=(\Delta_+\cap[A_j])\setminus(\mathbf{V}\cup A_{j})$.
As for the first product on the right-hand side of \eqref{1-2-factor}, we see that
$(\Delta_+\setminus\Lh{A_j}) \setminus(\mathbf{V}\cup \Lh{A_{j-1}})
=\Delta_+\setminus(\mathbf{V}\cup \Lh{A_{j}})$ and
$S_{j-1,\gamma}(t_{\gamma},\mathbf{t}_{j-1,\mathbf{V}})
=S_{j,\gamma}(t_{\gamma},\mathbf{t}_{j,\mathbf{V}})$
(because $\langle\gamma^{\vee},\mu_{\beta_j}^{\mathbf{V}}\rangle=0$ for
$\gamma\in\Delta_+\setminus\Lh{A_j}$).
Therefore \eqref{1-2-factor} can be read as
\begin{align}\label{f1f2}
\biggl.f_{j-1,1}f_{j-1,2}\biggr|_
{t_{\beta_j}=2\pi\sqrt{-1}\langle\beta_j^\vee,\lambda\rangle}
=f_{j1}f_{j2}.
\end{align}
Combining \eqref{f3f4} and \eqref{f1f2} we arrive at \eqref{eq:rec} in the present case.
  
  {\it Case 1-2}. Assume
  $\beta_j\in\Lh{A_{j-1}}\setminus\mathbf{V}$. Then
  $\Lh{A_{j-1}}=\Lh{A_j}$ and we see that $\beta_j$ appears only
  in $f_{j-1,2}$ in the expression \eqref{eq:fvqj} of 
  $f_{j-1}(\mathbf{t}_{j-1};\mathbf{V},q)$. Furthermore
  \begin{equation}
    \Res_{t_{\beta_j}=2\pi\sqrt{-1}\langle\beta_j^\vee,\lambda\rangle}
    \frac{1}{t_{\beta_j}}
    \frac{t_{\beta_j}}
    {t_{\beta_j}-2\pi\sqrt{-1}\langle\beta_j^\vee,\lambda\rangle}=1
  \end{equation}
  and hence \eqref{eq:rec} holds.

{\it Case 2}.  The case when $j$ does not satisfy \eqref{eq:condj}.  
  There exists $k\leq j$ such that
  $\beta_k\notin\mathbf{V}\cup\Lh{A_{k-1}}$.  Take the
  minimum $k$ among them.  If $k\leq j-1$, then
  $f_{j}(\mathbf{t}_j;\mathbf{V},q)=f_{j-1}(\mathbf{t}_{j-1};\mathbf{V},q)=0$
  and \eqref{eq:rec} holds.  Assume $k=j$. 
  Then
  $f_{j-1}(\mathbf{t}_{j-1};\mathbf{V},q)$ is given by \eqref{eq:fvqj} and
  $f_{j}(\mathbf{t}_j;\mathbf{V},q)=0$. 
  In this case $\beta_j$ appears only
  in $f_{j-1,1}$
  in the expression \eqref{eq:fvqj} of
  $f_{j-1}(\mathbf{t}_{j-1};\mathbf{V},q)$.
  If $\langle\beta_j^\vee,\mu^{\mathbf{V}}_\beta\rangle
  =0$ for all $\beta\in\mathbf{V}\setminus\Lh{A_{j-1}}$,
  then $\beta_j\in\Lh{\mathbf{V}\cap [A_{j-1}]}\subset\Lh{A_{j-1}}$, which 
  contradicts to 
  $\beta_j\notin\mathbf{V}\cup\Lh{A_{j-1}}$.
  Hence at least one of 
  the coefficients of $t_\beta$ for
  $\beta\in\mathbf{V}\setminus\Lh{A_{j-1}}$
  does not vanish in
  \begin{align}
    & S_{j-1,\gamma}(t_{\gamma},\mathbf{t}_{j-1,\mathbf{V}})\Biggr|_{t_{\beta_j}=2\pi\sqrt{-1}\langle\beta_j^\vee,\lambda\rangle}\\
    &=t_{\beta_j}-\sum_{\beta\in\mathbf{V}\setminus\Lh{A_{j-1}}}
    t_\beta\langle\beta_j^\vee,\mu^{\mathbf{V}}_\beta\rangle\notag\\
    &-2\pi\sqrt{-1}
    \sum_{\beta\in\mathbf{V}\cap\Lh{A_{j-1}}}\langle\beta^\vee,\lambda\rangle
    \langle\beta_j^\vee,\mu^{\mathbf{V}}_\beta\rangle\Bigr|_{t_{\beta_j}=2\pi\sqrt{-1}\langle\beta_j^\vee,\lambda\rangle},\notag
  \end{align}
  which implies that the residue is $0$ (by the calculation along some `generic' path, 
  in the sense of Remark \ref{def_residue})
  and so
 \eqref{eq:rec} holds.

  Thus we showed that \eqref{eq:rec} holds in any case.  
\end{proof}

By the repeated use of Lemma \ref{residue}, we obtain
\begin{align}\label{iter_residue}
f_n(\mathbf{t}_n;\mathbf{V},q)=
\Res_{t_{\beta_n}=2\pi\sqrt{-1}\langle\beta_n^\vee,\lambda\rangle}
  \cdots
\Res_{t_{\beta_1}=2\pi\sqrt{-1}\langle\beta_1^\vee,\lambda\rangle}
    \Bigl(\prod_{\alpha\in\Delta_{I+}}\frac{1}{t_{\alpha}}\Bigr)
    f_0(\mathbf{t};\mathbf{V},q).
\end{align}
  
\begin{proof}[Proof of Theorem \ref{thm:main2}]
  Let
  $\Phi_I=\{\beta_k~|~\beta_k\notin\Lh{A_{k-1}}\}\subset\Delta_{I+}$.  Then
  $\Phi_I$ is a linearly independent set with $|\Phi_I|=|I|$. 

  Assume $\Phi_I\subset\mathbf{V}$.
  Let $\mathbf{V}_I=\mathbf{V}\setminus\Phi_I$. 
  Since $\Lh{A_n}=\Lh{\Delta_{I+}}$, we see that
  $\mathbf{V}\setminus\Lh{A_n}=\mathbf{V}_I$, 
$$\Delta_{+}\setminus(\mathbf{V}\cup\Lh{A_n})
=\Delta_+\setminus (\mathbf{V}_I\cup\Lh{\Delta_{I+}})
=\Delta^*\setminus \mathbf{V}_I,$$
$$(\Delta_+\cap\Lh{A_n})\setminus(\mathbf{V}\cup A_n)
=\Delta_{I+}\setminus(\mathbf{V}\cup\Delta_{I+})=\emptyset.$$ 
Further we have
$$\mathbf{V}\cap\Lh{A_n}=\mathbf{V}\setminus\mathbf{V}_I=\Phi_I.$$
Therefore \eqref{eq:fvqj} for $j=n$ implies  
  \begin{equation}
    \begin{split}
      &f_{n}(\mathbf{t}_n;\mathbf{V},q)\\
      &=
      \prod_{\gamma\in \Delta^*\setminus\mathbf{V}_I}
      \frac{t_\gamma}
      {t_\gamma-\sum_{\beta\in\mathbf{V}_I}
        t_\beta\langle\gamma^\vee,\mu^{\mathbf{V}}_\beta\rangle
        -2\pi\sqrt{-1}
        \sum_{\beta\in\Phi_I}\langle\beta^\vee,\lambda\rangle
        \langle\gamma^\vee,\mu^{\mathbf{V}}_\beta\rangle}
      \\
      &\qquad\times
      \exp\Bigl(
      2\pi\sqrt{-1}
      \sum_{\beta\in\Phi_I}\langle\beta^\vee,\lambda\rangle
      \langle\mathbf{y}+q,\mu^{\mathbf{V}}_\beta\rangle\Bigr)
      \prod_{\gamma\in\mathbf{V}_I}
      \frac{t_\gamma\exp
        (t_\gamma\{\mathbf{y}+q\}_{\mathbf{V},\gamma})}{e^{t_\gamma}-1}.
    \end{split}
  \end{equation}

  If $\Phi_I\not\subset\mathbf{V}$, then for
  $\beta_k\in\Phi_I\setminus\mathbf{V}$, we have
  $\beta_k\notin\mathbf{V}\cup\Lh{A_{k-1}}$ and hence
  $f_{n}(\mathbf{t}_n;\mathbf{V},q)=0$.
  
  Applying Lemma \ref{lm:indept_bases} to the right-hand side of \eqref{eq:exp_F}, 
 and noting that there is a one-to-one correspondence between 
  \begin{equation}
    \{\mathbf{V}\in\mathscr{V}~|~\Phi_I\subset\mathbf{V}\}
    \qquad\text{and}\qquad
    \{\mathbf{V}_I~|~\mathbf{V}_I\cup\Psi_I\in\mathscr{V}_I\},
  \end{equation}
  we obtain
  \begin{equation}
    F(\mathbf{t}_I,\mathbf{y},\lambda;I;\Delta)
    =
    \sum_{\mathbf{V}\in\mathscr{V}}
    \frac{1}{\abs{Q^\vee/L(\mathbf{V}^\vee)}}
    \sum_{q\in Q^\vee/L(\mathbf{V}^\vee)}
    f_{n}(\mathbf{t}_n;\mathbf{V},q).
  \end{equation}
  by using $\mathbf{t}_n=\mathbf{t}_I$.
  Combining this with \eqref{iter_residue}, and using \eqref{F_f0}, we finally
  obtain the formula \eqref{egregium} stated in Theorem \ref{thm:main2}.
  
So far we have discussed under the fixed choice of the order 
$\beta_1,\ldots,\beta_n$ of the elements of $\Delta_{I+}$. 
The right-hand side of \eqref{iter_residue} apparently depends on this choice.
However the left-hand side of \eqref{egregium}, defined by \eqref{eq:exp_F}, 
does not depend on the choice of
the order.    
So is the right-hand side of \eqref{egregium}.   
This implies the claim mentioned in Remark \ref{def_residue}, and
hence the proof of Theorem \ref{thm:main2} is thus complete.
    
\end{proof}


\begin{thebibliography}{999}
\bibitem{Bou}N. Bourbaki,
Groupes et Alg{\`e}bres de Lie, Chapitres 4, 5 et 6, Hermann, 1968.
\bibitem{Hum}
J. E. Humphreys, \emph{Reflection groups and Coxeter groups}, Cambridge University Press, Cambridge, 1990. 
\bibitem{IM}
S. Ikeda and K. Matsuoka,
On functional relations for Witten multiple zeta-functions,
Tokyo J. Math. {\bf 39} (2016), 1-22.
\bibitem{IMprep}S. Ikeda and K. Matsuoka,
On the functional relations for Euler-Zagier multiple zeta-functions,
preprint.
\bibitem{KMTKyushu}
Y. Komori, K. Matsumoto and H. Tsumura, 
Zeta-functions of root systems,
in \emph{The Conference on L-functions}, L. Weng and M. Kaneko (eds.), 
World Scientific, 2007, pp. 115-140. 
\bibitem{KMTWitten2}
Y. Komori, K. Matsumoto and H. Tsumura, 
On Witten multiple zeta-functions associated with semisimple Lie algebras II, 
J. Math. Soc. Japan {\bf 62} (2010), 355-394.
\bibitem{KMTLondon}
Y. Komori, K. Matsumoto and H. Tsumura, 
On multiple Bernoulli polynomials and multiple $L$-functions of root systems,
Proc. London Math. Soc. {\bf 100} (2010), 303-347.
\bibitem{KMTNicchuu}
Y. Komori, K. Matsumoto and H. Tsumura, 
Functional relations for zeta-functions of root systems,
in \emph{Number Theory: Dreaming in Dreams --- Proc. 5th China-Japan Seminar},
T. Aoki, S. Kanemitsu and J.-Y. Liu (eds.), Ser. on Number Theory and its Appl.
Vol. 6, World Scientific, 2010, pp. 135-183.
\bibitem{KMTWitten4}
Y. Komori, K. Matsumoto and H. Tsumura, 
On Witten multiple zeta-functions associated with semisimple Lie algebras IV,
Glasgow Math. J. {\bf 53} (2011), 185-206.
\bibitem{KMTMathZ}
Y. Komori, K. Matsumoto and H. Tsumura, 
Shuffle products for multiple zeta values and partial fraction decompositions of
zeta-functions of root systems,
Math. Z. {\bf 268} (2011), 993-1011.
\bibitem{KMTWitten3}
Y. Komori, K. Matsumoto and H. Tsumura, 
On Witten multiple zeta-functions associated with semisimple Lie algebras III,
in \emph{Multiple Dirichlet Series, L-functions and Automorphic Forms}, 
D. Bump et al. (eds.), Progr. in Math. Vol. 300, Springer, 2012, pp. 223-286.  
\bibitem{KMTPalanga}
Y. Komori, K. Matsumoto and H. Tsumura, 
Functional relations for zeta-functions of weight lattices of Lie groups of type $A\sb 3$, 
in \emph{Analytic and Probabilistic Methods in Number Theory}, Proc. 5th Intern. Conf. in
Honour of J. Kubilius, A. Laurin{\v c}ikas et al. (eds.), TEV, Vilnius, 2012, pp. 151-172.
\bibitem{KMTFACM}
Y. Komori, K. Matsumoto and H. Tsumura, 
A study on multiple zeta values from the viewpoint of zeta-functions of root systems,
Funct. Approx. Comment. Math. {\bf 51} (2014), 43-76.
\bibitem{KMT2014}
Y. Komori, K. Matsumoto and H. Tsumura, 
Lattice sums of hyperplane arrangements, 
Comment.~Math.~Univ.~St.~Pauli, \textbf{63} (2014), 161--213. 
\bibitem{KMTWitten5}
Y. Komori, K. Matsumoto and H. Tsumura, 
On Witten multiple zeta-functions associated with semisimple Lie algebras V,
Glasgow Math. J. {\bf 57} (2015), 107-130.
\bibitem{Mac}
I. G. Macdonald,
The Poincar{\'e} series of a Coxeter group,
Math. Ann. {\bf 199} (1972), 161-174.
\bibitem{Mat04}
K. Matsumoto, 
Functional equations for double zeta-functions,
Math. Proc. Cambridge Phil. Soc. {\bf 136} (2004), 1-7.
\bibitem{Mat06}
K. Matsumoto,
Analytic properties of multiple zeta-functions in several variables,
in \emph{Number Theory: Tradition and Modernization}, W. Zhang and Y. Tanigawa (eds.),
Springer, 2006, pp. 153-173.
\bibitem{MTFourier}
K. Matsumoto and H. Tsumura,
On Witten multiple zeta-functions associated with semisimple Lie algebras I,
Ann. Inst. Fourier {\bf 56} (2006), 1457-1504.
\bibitem{Nak06}
T. Nakamura,
A functional relation for the Tornheim double zeta function,
Acta Arith. {\bf 125} (2006), 257-263.
\bibitem{Nak08}
T. Nakamura,
Double Lerch value relations and functional relations for Witten zeta functions,
Tokyo J. Math. {\bf 31} (2008), 551-574.
\bibitem{Onod14}K. Onodera,
A functional relation for Tornheim's double zeta functions,
Acta Arith. {\bf 162} (2014), 337-354.
\bibitem{Sam}H. Samelson,
Notes on Lie Algebras,
Universitext, Springer, 1990.
\bibitem{Tornheim}
L. Tornheim,
Harmonic double series,
Amer. J. Math. {\bf 72} (1950), 303-314.
\bibitem{Tsu04}
H. Tsumura,
On Witten's type of zeta values attached to SO(5), Arch. Math. (Basel) {\bf 82} (2004), 147-152.
\bibitem{Tsu07}
H. Tsumura,
On functional relations between the Mordell-Tornheim double zeta-functions and the Riemann
zeta-function,
Math. Proc. Cambridge Phil. Soc. {\bf 142} (2007), 395-405.
\bibitem{Witten}
E. Witten,
On quantum gauge theories in two dimensions, 
Commun. Math. Phys. {\bf 141} (1991), 153-209.
\bibitem{Zagier}
D. Zagier,
Values of zeta functions and their applications,
in \emph{First European Congress of Mathematics}, Vol. II, A. Joseph et al. (eds.),
Progr. in Math. Vol. 120, Birkh{\"a}user, 1994, pp. 497-512.
\bibitem{ZBC}
X.Zhou, D. M.Bradley and T. Cai, 
Depth reduction of a class of Witten zeta functions,
Electron. J. Combin. {\bf 16} (2009), no. 1, Note 27, 7 pp. 
\end{thebibliography}
\end{document}